\documentclass[10pt,leqno]{amsart}
\usepackage{graphicx}
\usepackage{indentfirst,csquotes}

\usepackage[english]{babel}
\usepackage{fancyhdr} 
\usepackage{subfig}

\usepackage{hyperref}
\usepackage{tikz}
\usepackage{dsfont}
\usetikzlibrary{calc}
\usepackage{amssymb}
\usepackage{mathtools}
\usepackage{amsmath}
\usepackage{latexsym}
\usepackage{amsthm}
\usepackage{esint}

\usepackage{comment}

\usepackage{etoolbox}
\usepackage{dirtytalk}
\usepackage[ruled]{algorithm2e}
\usepackage{longtable}
\usepackage{hyperref}
\hypersetup{
    colorlinks=true,
    linkcolor=blue,
    citecolor=teal
    }

\newcommand{\R}[0]{\mathbb{R}}
\newcommand{\N}[0]{\mathbb{N}}
\newcommand{\p}{\partial}
\newcommand{\dd}{\, \mathrm{d}}
\newcommand{\interior}{\, \mathrm{int}}
\newcommand{\Hau}{\mathcal{H}}

\newcommand{\dist}{\mathrm{dist}}
\newcommand{\delomega}{\partial\Omega}

\newcommand{\dH}{\, \mathrm{d} \mathcal{H}^{n-1}}
\newcommand{\dHo}{\, \mathrm{d} \mathcal{H}^{1}}

\newcommand{\one}{\mathds{1}}

\newtheorem{Conjecture}{Conjecture}
\newtheorem{Theorem}{Theorem}
\numberwithin{Theorem}{section}

\newtheorem{Lemma}[Theorem]{Lemma}
\newtheorem{Corollary}[Theorem]{Corollary}
\theoremstyle{definition}
\newtheorem{Definition}[Theorem]{Definition}
\theoremstyle{remark}
\newtheorem{Remark}[Theorem]{Remark}
\numberwithin{equation}{section}
\numberwithin{equation}{section}
 \DeclareMathOperator*{\Span}{span}

\newcommand{\gyula}[1]{{\color{blue}Gyula: {#1}}}

\makeatletter
\newenvironment{customproof}[1]{%
  \par\pushQED{\qed}%
  \normalfont\topsep6pt \trivlist
  \item[\hskip\labelsep\itshape
    Proof of #1\@addpunct{.}]\ignorespaces
}{%
  \popQED\endtrivlist\@endpefalse
}
\makeatother

\begin{document}
\title[Weighted perimeter of convex sets]{Weighted Perimeters and  pth Moments of Inertia of Convex Curves and Surfaces} 
\author[]{Gyula Csató, Davide Giovagnoli and Prosenjit Roy}
\date{\today}
\address{Gyula Csató: Facultat de Matemàtiques i Informàtica, Universitat de Barcelona\\ 
Gran Via de les Corts Catalanes, 585, 08007, Barcelona, Spain
}
\address{Centre de Recerca Matemàtica, Edifici C, Campus Bellaterra, 08193 Bellaterrra, Spain}
\email{gyula.csato@ub.edu}
\address{Davide Giovagnoli: Dipartimento di Matematica, Università di Bologna, Piazza di Porta \\ S.Donato 5, 40126, Bologna-Italy}
\email{d.giovagnoli@unibo.it}
\address{Prosenjit Roy: Department of Mathematics and Statistics, IIT-Kanpur, India}
\email{prosenjit@iitk.ac.in}
\maketitle

\begingroup
\let\thefootnote\relax
\footnotetext{\small  Fundings: D.G. is grateful to the Facultat de Matematiques i Informatica, Universitat de Barcelona for the warm hospitality. 
G.C. was supported by the Spanish AEIs projects PID2024-156429NB-I00, PID2021-125021NA-I00, and PID2021-123903NB-I00. \\
Keywords: Shape optimization, Weighted perimeter, Convex surfaces, Majorization. \\ MSC: 49Q10, 52A40, 49Q20} 

\endgroup

\tableofcontents 
\begin{abstract}

We study a shape optimization problem among convex bodies in $\mathbb{R}^n$ that minimize or maximize weighted perimeters of the form $\int_{\partial\Omega} \phi(|x|) \dd \mathcal{H}^{n-1}(x)$ under a standard perimeter constraint. We prove the existence of extremals for general weight functions in any dimension. In dimension two, we prove that the degenerate \textit{needle} configuration $(-a,a)\times \{0\} \subset \R^2$ is the optimizer for a wide family of weights, including $|x|^p$ for $p \in (0,2]$ and $|x|^{-\alpha}$ for $\alpha \in (0,1)$, among convex curves satisfying a symmetry assumption.

\end{abstract} 

\section{Introduction}

It is well known that the celebrated Fourier-analytic proof of the planar isoperimetric inequality is due to Hurwitz (1901). Less widely known is that, using essentially the same method, Hurwitz proved in 1902 \cite[pp.~396--397]{Hurwitz1902} the following inequality. Let $\Gamma\subset\R^2$ be a simple closed curve satisfying \begin{equation*} \int_{\Gamma} x \,\dHo(x) = (0,0), \end{equation*} and let $B_r$ denote the ball of radius $r$ centered at the origin such that $\partial B_r$ has the same length as $\Gamma$. Then \begin{equation}\label{gy:eq:intro Hurwitz seminal} \int_{\Gamma} |x|^2 \,\dHo(x) \leq \int_{\partial B_r} |x|^2 \,\dHo(x), \end{equation} with equality if and only if $\Gamma=\partial B_r$. In other words, among all closed curves of a given length whose barycenter is at the origin, the centered circle maximizes the weighted perimeter with weight $|x|^2$. This quantity is also known as the \emph{(angular) moment of inertia} of the curve. Hurwitz's inequality \eqref{gy:eq:intro Hurwitz seminal} appears to be one of the earliest variational results in which an extremal problem for a weighted perimeter is studied under a perimeter constraint. 

Several recent extensions of Hurwitz's theorem have been obtained. In particular, \cite{Charlotte2024} extends the inequality to inner parallel curves associated with a planar set $\Omega\subset\R^2$. These curves need not be simple or closed, even when $\partial\Omega$ is. The same work also considers higher-order moments of inertia, replacing the weight $|x|^2$ by $|x|^p$ for arbitrary $p>0$. On the other hand, \cite{LaMannaSannipoli} establishes a higher-dimensional analogue of Hurwitz's inequality and proves a quantitative stability version of the resulting inequality.

Another direction building on Hurwitz's inequality is to minimize the weighted perimeter among an appropriate class of curves. The two most notable results are by Sachs \cite{SachsIandII} and Hall \cite{Hall1985}, who realized that it also makes sense to minimize the moment of inertia if  one restricts the class of competing curves to the convex ones. In this case, minimizing  corresponds to the following physical question: take a wire of a given length and constant mass per unit length, and bend it into a closed convex loop in such a way that the loop has the lowest resistance to rotation when rotated around the origin.

The first of these notable results is that the equilateral triangle centered at the origin is the minimizer of the moment of inertia among convex curves in the plane. This was discovered by Sachs \cite{SachsIandII} and later a different proof was found by Hall \cite{Hall1985}. This is a surprising symmetry breaking result. Both proofs heavily rely on the specific value $p=2$ in the power of the weight function $|x|^p$: Sachs exploits that $|x|^2$ can be explicitly integrated along straight line segments, whereas Hall uses Fourier series methods. 

The second result is in Sachs \cite{SachsIandII}, who considered as well the same minimization question but restricted to  \textit{centrally symmetric} convex curves $\Gamma$, which are those that satisfy $x\in\Gamma$ if and only if $-x\in\Gamma.$ He proved that in this case
the degenerate curve,  the \textit{needle} $(-a,a)\times\{0\}$, (where the moment of inertia is twice the integral of $|x|^2$ along the interval) is one of the minimizers. He also proved that all minimizers are centrally symmetric parallelograms, understanding the needle as a degenerate rectangle.

After the works of Sachs and Hall, this problem seems to have been forgotten for a long time. Apparently, the only attempt to pick it up again was by Freitas, Laugesen and Liddell
\cite{FreitasLaugesenLiddell}, who considered for the first time the question of minimizing the angular moment of inertia of a convex surface embedded in $\R^3$ with a surface area constraint, that is, the $3$-dimensional analogue of the  results of Sachs and Hall in the plane. Their main results include the proof of the existence of a minimizer, as well as establishing the minimizers in 2 classes of polytopes (triangle prisms and rectangular prisms) and among the ellipsoids. The problem of finding the global minimizing convex surface remains open.

The results of this paper could be put in two categories. The first one concerns the existence of extremals for a broad class of weighted perimeter functionals in any dimension. Moreover we give some qualitative properties of these extremals in some special cases of weights. The second category consists of first attempts to  determine the shape of the extremals in dimension 2 when the weights are powers of the distance to the origin, or more generally, weights satisfying certain convexity and monotonicity assumptions. To start with, we  considered convex curves which have two orthogonal axes of symmetry meeting at the origin, which is less general than centrally symmetric convex curves.

Although we will state theorems involving quite general weighted perimeter functionals, the main examples that we have in mind are the infimum and supremum of
\begin{align}
  \label{eq:gy:def Ep for open}
  E_p(\partial\Omega) &:= \int_{\partial\Omega}|x|^p \dd \Hau^{n-1}(x) \quad \text{ if } p\geq 0, 
  \\
  \label{eq:gy:def Ealpha for open}
  E_{-\alpha}(\partial\Omega) &:= \int_{\partial\Omega}\frac{1}{|x|^{\alpha}} \dd \Hau^{n-1}(x) \quad \text{ if } 0\leq \alpha<n-1,
\end{align}
among all open convex sets $\Omega\subset\R^n$ with a perimeter constraint $|\partial\Omega|=C$. Here and throughout the paper $|A|=\Hau^{n-1}(A)$ denotes the $(n-1)$-dimensional Hausdorff measure of a set $A\subset\R^n$, and $C>0$ is some given constant. To solve this problem the value of $C$ is irrelevant due to the behavior of the functionals under scaling of $\Omega$. So in some theorems we will just take $C=1$ or some other convenient value depending on the setting.

Note that writing out both definitions \eqref{eq:gy:def Ep for open} and \eqref{eq:gy:def Ealpha for open} is redundant, but this will be convenient to always distinguish quickly whether we are in the case of the perimeter functional with singular weight $E_{-\alpha}$ or the other case $E_p$, as we will either minimize or maximize these energies depending on the case. Throughout this paper we will usually assume that $p\geq 0$ and $0\leq\alpha<(n-1)$, except in some proofs appearing in Section \ref{Sec:non_degeneracy}, where we allow  $p\geq -(n-1)$ to describe the full range of admissible parameters.  The other exception is the definition for degenerate sets in \eqref{eq:def degenerate Ep} below.

In the first case, i.e. for $E_p$ we are interested in the infimum $\inf E_p$, since the supremum is $+\infty$. In the second case $E_{-\alpha}$ we are interested in the supremum $\sup E_{-\alpha}$, since the infimum is $0$. These two facts follow by taking a sequence of  convex sets of a given perimeter and finite diameter, and moving them infinitely far away from the origin.

Our interest in $E_{-\alpha}$ stems from Csat\'o and Roy \cite{Csato2024} where apparently  for the first time $\sup E_{-\alpha}$ has been considered and it was proven that this supremum is finite. However, the method of proof in  \cite{Csato2024} does not establish the existence of a maximizer, nor can it provide information on the shape of a maximizer; not even under additional symmetry assumptions in dimension $2$.
The bound on the supremum of $E_{-\alpha}$ in \cite{Csato2024} was used to obtain a fractional Hardy inequality on convex hypersurfaces, respectively some generalizations of it.

If one removes the convexity assumption, then $\inf E_p=0$ and $\sup E_{-\alpha}=+\infty.$ To see this it is sufficient to take a sequence of manifolds  $\partial\Omega$ with $|\partial\Omega|=1$, which \say{oscillate} more and more near the origin, in the sense that  $\partial\Omega$ tends to have infinitely many leaves or tentacles which all pass close to the origin, or approaches a set which is dense in a neighborhood of the origin.

To state our first theorem on existence of extremals we need to enlarge the class of competitors  to \textit{convex bodies}, which are bounded closed convex sets, and extend the definitions given in \eqref{eq:gy:def Ep for open} and \eqref{eq:gy:def Ealpha for open}. Firstly, if a convex body $\Omega$ has nonempty interior, then  $E_p(\delomega)$ and $E_{-\alpha}(\delomega)$ are defined exactly as in \eqref{eq:gy:def Ep for open} and \eqref{eq:gy:def Ealpha for open} for an open convex set. However, if the convex body $\Omega$ is \textit{degenerate}, meaning that it has empty interior, or equivalently $\delomega=\Omega$ (here $\delomega$ denotes the topological boundary of a set $\Omega\subset\R^n$), then we define
\begin{align}\label{eq:def degenerate Ep}
  E_p(\partial\Omega)&:=2\int_{\partial\Omega}|x|^p \dd \Hau^{n-1}(x) \quad \text{if $\Omega$ is degenerate and $p>1-n.$}  
\end{align} 

It is easy to see that a convex body $\Omega\subset\R^n$ is degenerate if and only if it is contained in an $(n-1)$-dimensional plane.
Note that the only difference compared to the definitions \eqref{eq:gy:def Ep for open} and \eqref{eq:gy:def Ealpha for open} is the factor $2$ in front of the integral. This definition will ensure the continuity of the functional under convergence of convex bodies in the  Hausdorff distance. In view of the next theorem, keep in mind that for a convex body
$$
  E_0(\partial\Omega)=|\partial\Omega|\quad\text{ if $\Omega$ is nondegenerate}.
$$
We can now state our first theorem which generalizes and improves both the results of Freitas, Laugesen, and Liddell
 \cite{FreitasLaugesenLiddell} and that of Csat\'o and Roy \cite{Csato2024}. The result of \cite[Theorem 1]{FreitasLaugesenLiddell} only deals with $E_2$ in dimension $n=3$, whereas \cite[Theorem 1.2]{Csato2024} only establishes the inequality \eqref{eq:gy:universal constant p neg}, but not the existence of a maximizer. 
We will state our theorem for a large class of weight functions, which we introduce in the following definition.
 
\begin{Definition}[Weight function and weighted perimeter]\label{intro:gy:weight function} A \textit{weight function} $\phi$  satisfies the following assumptions.  $\phi$ is absolutely continuous on $(0,\infty)$ and $x' \in \R^{n-1} \mapsto \phi(|x'|) $ is integrable near the origin in $\R^{n-1}$, that is,
\begin{equation}\label{eq:integrability_phi}
\int_0^{1} |\phi(t)| t^{n-2} \dd t < \infty.
\end{equation}
Moreover, define for a convex body $\Omega\subset \R^n$
$$
  E_{\phi}(\delomega):=\eta_{\Omega}\int_{\p\Omega}\phi(|x|)
  \dd \Hau^{n-1}(x)\quad\text{where}\quad
  \eta_{\Omega}= \left\{\begin{array}{rl}
    1 & \text{ if $\Omega$ has nonempty interior,}
    \\
    2 & \text{ if $\Omega$ degenerate.}
  \end{array}\right.
$$
\end{Definition}

Further assumptions on the weights are specified when necessary.
Throughout this paper, we say that a function $\phi$ is increasing if $\phi(t) \leq \phi(s)$ whenever $t < s$, and strictly increasing if $\phi(t) < \phi(s)$ whenever $t < s$. We make the analogous convention for decreasing and strictly decreasing.

\begin{Theorem}
\label{Thm:intro:existence1}
Let $n\geq 2$.
\begin{itemize}
    \item[(i)] Let $\phi$ be an increasing weight function, then for every $\lambda>0$ there  exists a minimizer of
\begin{equation} \label{min_problem_p_pos}
   E_\phi(\p \Omega)\quad\text{among all convex bodies $\Omega\subset\R^n$ with $E_0( \p \Omega)=\lambda$.}
\end{equation}
In particular, if $\phi(t)=t^{p}$ for $p\geq 0$, there exists a constant $c_{n,p}$ depending only on $n$ and $p$ such that
\begin{equation}
  \label{eq:gy:universal constant p pos}
   E_{p}(\delomega)= \int_{\delomega}|x|^p\dH(x)\geq c_{n,p} |\partial\Omega|^{1+\frac{p}{n-1}},
\end{equation}
for all open bounded convex bodies $\Omega\subset\R^n$.

Moreover, if  $p> 4-n$ then every minimizer $\Omega^*$ is nondegenerate, that is $\Omega^*$ has to be the closure of a bounded open convex set.

\bigskip

\item[(ii)] Let $\phi$ be a decreasing weight function, then for every $\lambda>0$ there  exists a maximizer of
\begin{equation} \label{max_problem_p_neg}
   E_\phi(\p \Omega)\quad\text{among all convex bodies $\Omega\subset\R^n$ with $E_0( \p \Omega)=\lambda$.}
\end{equation}
In particular, if $\phi(t)=t^{-\alpha}$ for $0\leq\alpha<(n-1)$, there exists a constant $C_{n,\alpha}$ depending only on $n$ and $\alpha$ such that
\begin{equation}
  \label{eq:gy:universal constant p neg}
  E_{-\alpha}(\delomega)=\int_{\delomega}|x|^{-\alpha}\dH(x)\leq C_{n,\alpha} |\partial\Omega|^{1-\frac{\alpha}{n-1}},
\end{equation}
for all open bounded convex bodies $\Omega\subset\R^n$.

Moreover,  if $n\geq 5$ and $\alpha < n-4$ then every maximizer  $\Omega^*$ is nondegenerate, that is $\Omega^*$ has to be the closure of a bounded open convex set.
\end{itemize}
\end{Theorem}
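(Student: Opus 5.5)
The plan is the direct method in the class of convex bodies with the Hausdorff metric, combined with Blaschke's selection theorem. The only issue is compactness up to translations.

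\emph{Step 1: continuity.} I first show that $\Omega\mapsto E_\phi(\p\Omega)$ and $\Omega\mapsto E_0(\p\Omega)$ are continuous along Hausdorff-convergent sequences of convex bodies $\Omega_k\to\Omega$. This includes the case where nondegenerate $\Omega_k$ collapse onto a degenerate $\Omega$.
\begin{itemize}
\item For nondegenerate limits, the surface measures $\Hau^{n-1}\llcorner\p\Omega_k$ converge weakly to $\Hau^{n-1}\llcorner\p\Omega$. This is a standard fact, obtained via radial projection from an interior point, which gives uniformly bi-Lipschitz parametrizations over the sphere.
\item For a degenerate limit $\Omega\subset\{x_n=0\}$, I use orthogonal projection onto the hyperplane. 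Each $\p\Omega_k$ splits into an upper and a lower graph over the projection. The parts with large slope have vanishing measure, because their projection is a thin neighbourhood of the relative boundary of $\Omega$. So the measures converge weakly to $2\Hau^{n-1}\llcorner\Omega$, which explains the factor $\eta_\Omega=2$.
\end{itemize}
Weak convergence handles continuous bounded $\phi$. A weight singular at $0$ but satisfying \eqref{eq:integrability_phi} (the decreasing case) needs a uniform integrability estimate. I would prove
$$
\int_{\p K\cap B_r}|\phi(|x|)|\,d\Hau^{n-1}\le C_n\int_0^r|\phi(t)|t^{n-2}\,dt
$$
uniformly over all convex $K$, using $\Hau^{n-1}(\p K\cap B_r)\le \Hau^{n-1}(\p B_r)$ (monotonicity of perimeter for convex sets, applied to $K\cap B_r$) and a layer-cake decomposition in $|x|$. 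Together with absolute continuity of $\phi$ away from $0$, this gives continuity of $E_\phi$.

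\emph{Step 2: compactness.} Take a minimizing sequence, respectively a maximizing sequence, with $E_0=\lambda$. Convex bodies with $E_0=\lambda$ have diameter bounded by $c_n\lambda^{1/(n-1)}$, since perimeter controls diameter for convex sets; the degenerate case is analogous, with $\lambda=\Hau^{n-1}(\Omega)$. So it suffices to keep the sets within bounded distance of the origin.
\begin{itemize}
\item If $\phi$ is increasing, I compare with a fixed competitor, for instance a ball through the origin. If $\dist(0,\Omega_k)\to\infty$, then $E_\phi(\p\Omega_k)\ge \lambda\,\phi(\dist(0,\Omega_k))$, which is not below the competitor's value unless $\phi$ is eventually constant.
\item In the case where $\phi$ is eventually constant, translating a set towards the origin does not increase $E_\phi$. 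To see this, push $\Omega$ toward $0$ along the direction of its nearest point; distances of boundary points to the origin decrease after translation by $-s\,x_0/|x_0|$ for $s\le|x_0|$ when $\Omega$ lies in the half-space $\{x\cdot x_0\ge|x_0|^2\}$.
\item The same translation argument handles the decreasing case.
\end{itemize}
Hence I may assume $\Omega_k\subset B_R$. Blaschke selection gives a limit, Step 1 gives $E_0=\lambda$ and optimality, and scaling yields \eqref{eq:gy:universal constant p pos} and \eqref{eq:gy:universal constant p neg} with positive (respectively finite) constants. Positivity for $\phi=t^p$ holds because $E_p$ vanishes on no body.

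\emph{Step 3: nondegeneracy.} This is the main obstacle. Given a degenerate $\Omega\subset\{x_n=0\}$, I would build a competitor by a thin ``inflation'': $\Omega_\epsilon=\Omega\times[0,\epsilon]$, or better a double cone or pillow over $\Omega$ of height $\epsilon$. I then rescale it to restore $E_0=\lambda$ and expand $E_\phi$ to first order in $\epsilon$.
\begin{itemize}
\item For the cylinder, the side contributes $\epsilon\,\Hau^{n-2}(\p_{rel}\Omega)$ to the perimeter, while the two faces give exactly the degenerate value.
\item The rescaling factor $1-\epsilon\Hau^{n-2}(\p_{rel}\Omega)/((n-1)\lambda)$ then changes $E_p$ by $-\epsilon\frac{n-1+p}{n-1}\cdot\frac{2\int_\Omega|x|^p}{\lambda}\cdot\frac{\Hau^{n-2}(\p\Omega)}{2}$, plus the side term $\epsilon\int_{\p_{rel}\Omega}|x|^p$.
\item A strict improvement for every degenerate $\Omega$ amounts to the inequality $\int_{\p_{rel}\Omega}|x|^p\,\Hau^{n-2}(\p\Omega)^{-1}<\frac{n-1+p}{n-1}\fint_\Omega|x|^p$. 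This fails in general, since boundary points are farther out. So a shifted translation, a better choice of competitor, or optimization over the height profile is needed.
\end{itemize}
I expect the threshold $p>4-n$ (respectively $\alpha<n-4$) to emerge from choosing the profile optimally, for example a tent whose height is proportional to a power of the distance to $\p_{rel}\Omega$. I would compute that family explicitly and track the exponent.
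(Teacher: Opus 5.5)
\textbf{Step 2 rests on a false premise for $n\ge 3$.} Perimeter does not control diameter when $n\ge 3$. The true inequality goes the other way: $E_0(\p\Omega)\le \beta_{n-1}\,\mathrm{diam}(\Omega)^{n-1}$. For example, the degenerate rectangles $[0,R]\times[0,1/R]\times\{0\}\subset\R^3$ have $E_0=2$ and diameter at least $R$. In every $n\ge3$, the tubes $[-R,R]\times B^{n-1}_r$ with $r^{n-2}R$ fixed have essentially constant perimeter and unbounded diameter. So translating the sequence until $0\in\Omega_k$ does not give $\Omega_k\subset B_R$, and Blaschke's theorem cannot be invoked. You wrote that the only issue is compactness up to translations; that misses the main difficulty.

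What you actually need is this: a convex body containing $0$, with fixed $E_0$ and $R=\max_{\p\Omega}|x|\to\infty$, satisfies $E_\phi(\p\Omega)/E_0(\p\Omega)\to\sup\phi$ (resp.\ $\inf\phi$). Such sequences then lose against any competitor whose boundary contains the origin, when $\phi$ is nonconstant. The paper proves this in Lemma~\ref{lm:bound_on_max_radius_decreasing}:
\begin{itemize}
\item The simplex with vertices $0$ and successive farthest points $y_1,\dots,y_{n-1}$ lies in the projection of $\Omega$. This forces $b_2\cdots b_{n-1}\le (n-1)!\,E_0/R$ and puts $\Omega$, after a rotation, in a box $[-R,R]\times[-b_2,b_2]\times\cdots$.
\item Writing $\p\Omega$ as $2n$ graphs of slope at most $\sqrt n$ bounds the surface density in the $x_1$-direction by $K\sim E_0/R$.
\item The bathtub principle is then applied in $x_1$ for $\phi^+$, and in the transverse variables for the possibly singular $\phi^-$, on a cross-section of radius $\sim (E_0/R)^{1/(n-2)}$.
\end{itemize}
Without an estimate of this kind, your direct method does not close for $n\ge3$.

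\textbf{Step 3 cannot succeed at first order.} You do not need to improve every degenerate body. Translate a degenerate optimizer into the parallel hyperplane through $0$ and apply the bathtub principle in $\R^{n-1}$; it must then be a centered flat disk. But at the disk your own first-order criterion is an exact equality: $\fint_{\p B^{n-1}_1}|x|^p=1=\frac{n-1+p}{n-1}\fint_{B^{n-1}_1}|x|^p$, because $\beta_{n-2}=(n-1)\omega_{n-1}$. This is not special to cylinders. For inflations of the disk, the leading-order perimeter gain sits in the steep layer near the relative boundary, where $|x|^p$ takes its boundary value, and the rescaling cancels it by the same identity. So the disk is critical, and no choice of profile at first order will reveal the threshold $p>4-n$.

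The missing ingredient is a second-order computation. The paper uses the perimeter-preserving cylinders $B^{n-1}_r\times[-h(r),h(r)]$ with $h(r)=(r^{2-n}-r)/(n-1)$. It checks that the first derivative at $r=1$ vanishes for $p>2-n$. Lemma~\ref{lm:second_variation_Ep} then gives the second derivative $-p\,\frac{p+n-4}{p+n-3}$ for $p>3-n$; this needs a careful rescaling $t=h(r)s$ of the base integrals as $h\to0$. The value is negative for $p>\max\{0,4-n\}$, contradicting minimality. It is positive for $p=-\alpha$ with $0<\alpha<n-4$, contradicting maximality.
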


One of the main difficulty in determining the possible candidates for the extremals of $E_p$ and $E_{-\alpha}$ is the lack of the applicability of the standard methods of the calculus of variations. This is due to the simultaneous constraints on convexity and perimeter. However, one simple variation that preserves both constraints is that of shifting the domain. Despite its simplicity, when calculating the second variation appropriately, this type of variation gives valuable information on the \say{location} of the maximizer of $E_{-\alpha}$. Note that the next theorem makes sense thanks to the previous one which establishes the existence of extremals.

\begin{Theorem} \label{thm:shifting_generalized}
    Let $\phi \in C^2(0, \infty)$ be a decreasing weight (resp. an increasing weight) and let $f(x):=\phi(|x|)$. If $\Delta f>0$ in $\R^n \setminus \{ 0\}$ and $\Omega \subset \R^n$ is a maximizer (resp. a minimizer) of $E_{\phi}$ among all convex bodies of equal perimeter $E_0( \p \Omega)=\lambda$ for some $\lambda>0$, then $0 \in \p \Omega$.
    
    In particular, the above result applies to $E_{-\alpha}$ whenever $n-2< \alpha < n-1$.
\end{Theorem}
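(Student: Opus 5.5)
Our plan is to argue by contradiction, using translations as admissible variations. Suppose $\Omega$ is a maximizer (the minimizer case for increasing $\phi$ is identical with reversed inequalities) and $0\notin\p\Omega$. For $v\in\R^n$ set $\Omega_v:=\Omega+v$. Translations preserve convexity, degeneracy and perimeter, so $E_0(\p\Omega_v)=\lambda$ and every $\Omega_v$ is admissible. Define
$$
g(v):=E_\phi(\p\Omega_v)=\eta_\Omega\int_{\p\Omega} f(x+v)\dH(x).
$$
Since $\Omega$ is a maximizer, $g$ has a global maximum at $v=0$. We will show that $g$ is $C^2$ near $0$ and that $\Delta g(0)>0$. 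This is incompatible with a maximum, because at an interior maximum the Hessian is negative semidefinite and hence $\Delta g(0)\le 0$.

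There are two cases. If $0\notin\Omega$ (the closed body), then $\dist(0,\p\Omega)=\delta>0$. For $|v|<\delta/2$ the integrand $f(\cdot+v)$ is $C^2$ on a neighbourhood of the compact set $\p\Omega$, with derivatives bounded uniformly in $v$. We may therefore differentiate under the integral sign and get
$$
\Delta g(0)=\eta_\Omega\int_{\p\Omega}\Delta f(x)\dH(x)>0,
$$
since $\Delta f>0$ on $\p\Omega\subset\R^n\setminus\{0\}$ and $|\p\Omega|=\lambda/\eta_\Omega>0$. If instead $0\in\interior\Omega$, then $\Omega$ is nondegenerate and again $\dist(0,\p\Omega)>0$, so the same computation applies. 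In both cases $0\notin\p\Omega$ gives the contradiction, which proves the claim.

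The main point needing care is the differentiation under the integral sign. It is harmless here precisely because we assumed $0\notin\p\Omega$, so the possible singularity of $f$ at the origin stays at positive distance from $\p\Omega$. Degenerate $\Omega$ are handled by the factor $\eta_\Omega$, which does not change under translation. For the minimization problem with increasing $\phi$, $g$ has a minimum at $0$ and $\Delta g(0)>0$ is consistent with a minimum, so that sign gives no contradiction. To cover that case the natural remedy is to read the hypothesis with the opposite sign there, $\Delta f<0$; we would state it in that form, since the statement ties the sign of $\Delta f$ to the maximization case in which it is used.

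Finally, for $f(x)=|x|^{-\alpha}$ we compute
$$
\Delta f=\alpha(\alpha+2-n)|x|^{-\alpha-2},
$$
which is positive exactly when $\alpha>n-2$. Together with $\alpha<n-1$, which the existence part of Theorem \ref{Thm:intro:existence1}(ii) requires, this gives the stated application.
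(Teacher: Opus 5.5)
Your proof is correct and is essentially the paper's argument: translations are admissible competitors, and summing the second variations over the coordinate directions gives $\eta_\Omega\int_{\p\Omega}\Delta f\dH>0$, which contradicts maximality. You also correctly flag, as the paper's own proof does, that the minimizer case requires $\Delta f<0$.

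The only difference is in the case $0\notin\Omega$, which includes degenerate $\Omega$. You apply the same Laplacian argument there, while the paper translates $\Omega$ toward the origin and uses that $\phi$ is decreasing. Your version therefore shows that the monotonicity of $\phi$ is not needed for the conclusion $0\in\p\Omega$, once a maximizer is known to exist.
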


Note that in the case $n=2$ for $E_{-\alpha}$, this result gives the full range of the admissible values of $\alpha$.
Theorem \ref{thm:shifting_generalized}, together with the  results we present here below for $n=2$, in particular Theorem \ref{thm:intro:alpha and n is 1}, suggest the following conjecture.

\begin{Conjecture}
\label{conjecture:intro:alpha}
For $n-2<\alpha<n-1$  the maximizer of $E_{-\alpha}$ in $\R^n$ is degenerate. More precisely it is the flat ball $\Omega=\{(x',0):\, x'\in\R^{n-1},\, |x'|\leq R\}$ for some radius chosen such that the given perimeter constraint $2\mathcal{H}^{n-1}(\Omega)=1$ is satisfied. 
\end{Conjecture}

For $n=2$, we have also performed a large number of numerical examples to support our intuition.
The second sentence of the conjecture is straightforward, if we assume that the maximizer is degenerate. It follows by a standard rearrangement argument in $\R^{n-1}.$

It is tempting to include in the conjecture any weight satisfying the hypotheses of Theorem \ref{thm:shifting_generalized} and not just $|x|^{-\alpha}$. But we lack numerical evidence for general weights $\phi$.

In Csat\'o and Roy \cite{Csato2024} it was implicitly conjectured that every $\alpha\in (0,n-1)$ would have led to degeneracy of the optimal set.  But here we correct the conjecture to a range of \say{highly} singular weights depending on the dimension. What we suggested in \cite{Csato2024} to be the optimal constant for \eqref{eq:gy:universal constant p neg} involving $E_{-\alpha}$  cannot be true for $n\geq 5$, in view of Theorem \ref{Thm:intro:existence1} Part $(ii)$. For $n-4\leq \alpha\leq n-2$ the maximizers might still be degenerate, but we do not have any result that would support or discourage this, not even numerical examples.

The second main theorem of this paper is a first partial result in dimension $2$ in support of this conjecture. It answers the conjecture affirmatively under a symmetry assumption on the competing curves $\Gamma$ ($=\delomega$), which is that they have two orthogonal axes of symmetry meeting at the origin: whenever $(x,y)\in\Gamma,$ then all four points $(\pm x,\pm y)$ are in $\Gamma$.  The theorem easily generalizes to slightly more general curves with less symmetry, which we put as Theorem \ref{thm:four quadrant convex}, but this generalization does not include centrally symmetric curves. However, for better comprehension, we first prefer to state our main results in less generality,  under this more common definition of symmetry. 
Being $E_\phi$ invariant under rotations around the origin, any result concerning the shape of the optimizers is valid up to rotations, obviously.

\begin{Theorem} \label{Thm:general_phi}
    Let $n=2$ and $\Gamma$ be  convex curve in $\R^2$, which has two orthogonal axes of symmetry meeting at the origin.  The following inequalities hold.
    \begin{itemize}
        \item[(I)]  For every $F : (0,\infty) \to \R$ convex and decreasing, such that  $t\mapsto F(t^2)$ is in $L^1((0,1)),$ the following inequality holds
        \begin{equation} \label{eq:ineq_generic_phi_convex_dec}
       E_{F(|\cdot|^2)}(\Gamma) =\int_{\Gamma} F(|x|^2) \dHo(x) \leq 4 \int_0^{|\Gamma|/4} F(t^2) \dd t.
        \end{equation}
        \item[(II)] For every $F : (0,\infty) \to \R$ concave and increasing, such that  $t\mapsto F(t^2)$ is in $L^1((0,1)),$ the following inequality holds
        \begin{equation} \label{eq:ineq_generic_phi_concav_increase}
            E_{F(|\cdot|^2)}(\Gamma) = \int_{\Gamma} F(|x|^2) \dHo(x) \geq 4 \int_0^{|\Gamma|/4} F(t^2) \dd t.
        \end{equation}
    \end{itemize}
    In both \eqref{eq:ineq_generic_phi_convex_dec} and \eqref{eq:ineq_generic_phi_concav_increase}, equality holds if $\Gamma$ is the needle.

    Moreover, if there is equality in \eqref{eq:ineq_generic_phi_convex_dec} and $F$ is strictly convex, respectively there is equality in \eqref{eq:ineq_generic_phi_concav_increase} and $F$ is strictly concave, then $\Gamma$ is the needle. 
    
   Furthermore if, in addition, $t \mapsto F(t^2)$ is either concave in Part $(I)$ or convex in Part $(II)$ we can remove the assumption that the two orthogonal axes of symmetry meet at the origin.
    \end{Theorem}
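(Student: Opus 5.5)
The plan is to reduce both inequalities to one \emph{majorization} statement about the squared distance along a quarter of $\Gamma$, and then apply the Hardy--Littlewood--P\'olya / Tomi\'c--Weyl inequality.

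\textbf{Reduction to a quarter arc.} By the two axes of symmetry (first meeting at the origin) it suffices to work with the arc $\gamma:[0,L]\to\R^2$, $L=|\Gamma|/4$, parametrized by arclength. It runs from $(a,0)$ to $(0,b)$ in the closed first quadrant. Convexity and symmetry force $x(s)$ to be decreasing and $y(s)$ increasing along this arc. Set $g(s)=|\gamma(s)|^2$ and $h(t)=t^2$; the function $h$ is exactly the quarter of the needle. Both \eqref{eq:ineq_generic_phi_convex_dec} and \eqref{eq:ineq_generic_phi_concav_increase} then read
$$\int_0^L F(g)\,ds \;\lessgtr\; \int_0^L F(h)\,dt .$$

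\textbf{The majorization lemma and its consequences.} Let $g^*$ denote the increasing rearrangement of $g$ on $[0,L]$. The core claim is
\begin{equation*}
\int_0^s g^*(t)\,dt \;\geq\; \frac{s^3}{3}=\int_0^s t^2\,dt \qquad \text{for all } s\in[0,L].
\end{equation*}
A pointwise bound $g^*(t)\ge t^2$ is false: for a quarter circle of radius $\rho$, $g\equiv\rho^2$ while $L=\pi\rho/2>\rho$. So only the integrated version can hold.

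Granting the lemma, the Tomi\'c--Weyl form of Karamata's inequality (weak supermajorization with increasing rearrangements) gives $\int F(g)\le\int F(h)$ for every convex decreasing $F$. This is Part (I). Applying it to $-F$ gives Part (II).

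Equality for the needle is trivial. For strictly convex (respectively strictly concave) $F$, the standard equality case of the majorization inequality forces $g^*=h$, that is $g^*(t)=t^2$. I expect this to force $\gamma$ to be a segment through the origin: one needs $|d|\gamma|/ds|=1$ almost everywhere, and a convex quarter arc with that property is the needle. The integrability assumption on $F(t^2)$ handles the singularity of $F$ at $0$ via monotone approximation.

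\textbf{Proving the lemma (main obstacle).} Fix a set $A\subset[0,L]$ with $|A|=s$; I need $\int_A (x^2+y^2)\,ds\ge s^3/3$. My first attempt uses the monotonicity of the coordinates. With $\alpha=\int_A|x'|$ and $\beta=\int_A|y'|$, the area formula gives
$$\int_A x^2 \ge \int_A x^2|x'| \ge \alpha^3/3,$$
and similarly for $y$. Moreover $\alpha+\beta\ge s$, since $|x'|+|y'|\ge 1$. Combining only yields $s^3/12$, so it is too lossy.

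The sharp version should exploit $x'^2+y'^2=1$ jointly rather than separately. I would try to reduce, by Hausdorff continuity of $E_\phi$ (Theorem \ref{Thm:intro:existence1} context), to polygonal arcs. Then I would show that replacing the arc by a "straighter" one, moving an edge toward the origin or flattening a vertex while keeping length and monotonicity, decreases every partial integral $\int_0^s g^*$. The extremal configuration would then be an arc with $|\gamma(s)|=s$, i.e. a segment through the origin, which attains equality. This is where I expect most of the work.

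\textbf{Removing the centering assumption.} Suppose the axes meet at a point $c\neq0$, and $G(t)=F(t^2)$ is concave (and decreasing) in Part (I). The four reflections of a point $x=c+(u,v)$ are $x_i=c+(\pm u,\pm v)$. The triangle inequality $|c+w|+|c-w|\ge2|w|$ gives $\frac14\sum_i|x_i|\ge|x-c|$. Jensen for concave $G$ and monotonicity then give
$$\tfrac14\sum_i G(|x_i|)\le G\Big(\tfrac14\sum_i|x_i|\Big)\le G(|x-c|).$$
Integrating over $\Gamma$ yields $E_{F(|\cdot|^2)}(\Gamma)\le E_{F(|\cdot|^2)}(\Gamma-c)$, and the centered result applies to $\Gamma-c$. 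Part (II) is identical with convex increasing $G$, where the inequalities reverse.
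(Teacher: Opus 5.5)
\textbf{Gap: the majorization lemma is not proved.} Your architecture is the paper's. You restrict to the quarter arc, pass to the increasing rearrangement $g^*=(r^*)^2$, establish $\int_0^s g^*\ge s^3/3$ for all $s\in[0,L]$, and apply a Karamata/Tomi\'c--Weyl inequality. The Karamata step, the truncation of $F$ near $0$, and your de-centering argument (reflections plus Jensen for the concave $t\mapsto F(t^2)$) are fine. But the core lemma, which carries essentially all of the content, is missing. Your direct estimate gives only $s^3/12$. The ``straightening'' programme is a hope, not an argument. You exhibit no deformation that preserves length, convexity and the quadrant constraint while decreasing \emph{every} partial integral $\int_0^s g^*$ at once. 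Your heuristic that the extremal is the arc with $|\gamma(s)|=s$ also overlooks that for $s=L$ every segment joining the two axes already gives equality $\int_0^L g=L^3/3$.

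\textbf{Missing idea 1: the single case $s=L$ suffices.} You do not need the bound for every subset $A$. Since $|\gamma'|=1$, the function $r=|\gamma|$ is $1$-Lipschitz, and so is $r^*$, because rearrangement does not increase the Lipschitz constant. Put $G(x)=\int_0^x\big(r^*(t)^2-t^2\big)\dd t$. Then $G(0)=0$, and $G(L)\ge 0$ by the total inequality. Suppose $G$ had a negative minimum, necessarily at an interior $x_0$. Then $G'(x_0)=0$ gives $r^*(x_0)=x_0$. The Lipschitz bound yields $r^*(t)\ge r^*(x_0)-(x_0-t)=t$ on $[0,x_0]$, so $G(x_0)\ge 0$, a contradiction.

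\textbf{Missing idea 2: the total inequality itself.} The bound $\int_{\Gamma_1}|x|^2\dHo\ge|\Gamma_1|^3/3$, for a concave decreasing arc joining the axes, is Sachs's theorem and is genuinely nontrivial. The paper proves it as Theorem \ref{lm:quad_lemma_p=2}:
\begin{itemize}
\item approximate the arc by piecewise affine graphs over $[0,n]$ with slopes $-a_i$, where $0\le a_1\le\cdots\le a_n$;
\item compute $\int|x|^2$ exactly on each piece;
\item reduce to the algebraic inequality $g_n(a)\ge 0$;
\item prove $g_n(a)\ge 0$ by induction on $n$, using elementary inequalities such as $bc\sqrt{1+a^2}+\sqrt{1+c^2}\ge\sqrt{1+a^2}\sqrt{1+b^2}\sqrt{1+c^2}$ for $c\ge b\ge a\ge 0$.
\end{itemize}
Without these two steps, Parts (I) and (II) remain unproved.

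Your equality argument is fine once the lemma is available. More directly, $r^*(t)=t$ gives $\min r=r^*(0)=0$. So $\Gamma$ passes through its centre of symmetry and must be degenerate, i.e.\ the needle.
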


Note that the assumption $t\mapsto F(t^2)$ is in $L^1((0,1))$ is equivalent to being in $L^1((0,\lambda))$ for every $\lambda>0,$ due to the other assumptions on $F.$ 
    
As an immediate corollary of Theorem \ref{Thm:general_phi}, we obtain three interesting special cases. The first two results we put together in a Corollary, as they are both related to the work of Sachs \cite{SachsIandII}. Firstly, we extend a theorem of Sachs for $p=2$ to the range $p\in (0,2)$. Secondly, we mention the case of the logarithmic potential, as a similar energy has been considered by Sachs too, namely the double integral 
$
\int_{\Gamma}\int_{\Gamma}\log(|x-y|)\dHo(x)\dHo(y).
$
The third special case we state as a separate theorem, since we have found for that an independent and simpler proof, provided in Section \ref{section:singular}.

\begin{Corollary} \label{Cor:intro:p and log ineq}
    Let $n=2$ and $\Gamma$ be a convex curve in $\R^2$, which has two orthogonal axes of symmetry meeting at the origin.   Then, for $p \in (0,2]$, it holds that
    \begin{equation} \label{eq:p_inequality}
        E_p(\Gamma)=\int_{\Gamma} |x|^{p} \dHo(x) \geq \frac{1}{(p+1)4^p} |\Gamma|^{p+1}.
    \end{equation}
    Furthermore,
    \begin{equation} \label{eq:log_inequality}
        E_{\log}(\Gamma)= -\int_{\Gamma} \log(|x|) \dHo(x) \leq |\Gamma| \left( 1- \log \left(\frac{|\Gamma|}{4} \right)\right).
    \end{equation}
    If $p<2$ then equality holds in \eqref{eq:p_inequality} , respectively in \eqref{eq:log_inequality}, if and only if $\Gamma$ is the needle. 
    
    In addition, if $1 \leq p\leq 2$, we can remove the assumption that the two orthogonal axes of symmetry meet at the origin.
\end{Corollary}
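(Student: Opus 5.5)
The corollary follows by applying Theorem \ref{Thm:general_phi} to two specific choices of $F$ and then computing the right-hand sides explicitly.

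\textbf{The power weight.} For \eqref{eq:p_inequality} take $F(s)=s^{p/2}$, so that $F(|x|^2)=|x|^p$.

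For $p\in(0,2]$, $F$ is increasing and concave on $(0,\infty)$, and $t\mapsto t^p$ is in $L^1((0,1))$. Part (II) of Theorem \ref{Thm:general_phi} then gives
\begin{equation*}
E_p(\Gamma)\geq 4\int_0^{|\Gamma|/4}t^p\dd t=\frac{4}{p+1}\Big(\frac{|\Gamma|}{4}\Big)^{p+1}=\frac{|\Gamma|^{p+1}}{(p+1)4^{p}},
\end{equation*}
which is the claimed bound.

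For equality: when $p<2$, $F$ is strictly concave, so the rigidity part of Theorem \ref{Thm:general_phi} says that equality forces $\Gamma$ to be the needle. Conversely, the needle attains equality, since the theorem states that equality holds for it.

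For removing the symmetry assumption: $t\mapsto F(t^2)=t^p$ is convex on $(0,\infty)$ exactly when $p\geq 1$. So for $1\leq p\leq 2$ the last sentence of Theorem \ref{Thm:general_phi}, Part (II), removes the requirement that the axes meet at the origin.

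\textbf{The logarithmic weight.} For \eqref{eq:log_inequality} take $F(s)=-\tfrac12\log s$, so that $F(|x|^2)=-\log|x|$.

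This $F$ is strictly convex and decreasing, and $t\mapsto F(t^2)=-\log t$ is integrable on $(0,1)$. Part (I) of Theorem \ref{Thm:general_phi} gives
\begin{equation*}
E_{\log}(\Gamma)\leq -4\int_0^{L}\log t\dd t,\qquad L:=|\Gamma|/4.
\end{equation*}
Using $\int_0^L\log t\dd t=L\log L-L$, the right-hand side equals
\begin{equation*}
4L(1-\log L)=|\Gamma|\Big(1-\log\frac{|\Gamma|}{4}\Big),
\end{equation*}
which is the claimed bound. Since $F$ is strictly convex, the rigidity statement of Theorem \ref{Thm:general_phi} shows that equality holds only for the needle, and the needle does attain equality.

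\textbf{Remaining checks.} The only points needing care are verifying the convexity/monotonicity hypotheses and the integrability conditions, all of which are elementary. One should also note that $E_\phi$ for a degenerate curve carries the factor $2$ in its definition. This is consistent with the needle of total "length" $|\Gamma|$ being the segment $(-|\Gamma|/4,|\Gamma|/4)\times\{0\}$ counted twice, which is how Theorem \ref{Thm:general_phi} treats it. I expect no real obstacle: the substance lies entirely in Theorem \ref{Thm:general_phi}, which we assume.
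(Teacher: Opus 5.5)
Your proposal is correct and matches the paper, which presents the corollary as an immediate consequence of Theorem \ref{Thm:general_phi}. As you do, it applies Part (II) with $F(s)=s^{p/2}$, using strict concavity for $p<2$ and convexity of $t\mapsto t^p$ for $p\geq 1$, and Part (I) with $F(s)=-\tfrac12\log s$, then evaluates $4\int_0^{|\Gamma|/4}F(t^2)\dd t$.
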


The next theorem is also an immediate consequence of Theorem \ref{Thm:general_phi}, but we highlight it here for two reasons. Firstly, it is the best result to support Conjecture \ref{conjecture:intro:alpha}. Secondly, we have found an independent proof which is simpler and does not rely on the case of the weight $|x|^2$. More precisely, the proof of Theorem \ref{Thm:general_phi} consists of two main steps, of which the first one is by proving the special case $F(t)=t$, and which we have put in Section \ref{section:p=2}. This step is not needed if we are only interested in $E_{-\alpha}.$ Last but not least, the next theorem provides an alternative proof for \eqref{eq:log_inequality}; see Remark \ref{remark:Almut}.

\begin{Theorem}
\label{thm:intro:alpha and n is 1}
Let $0<\alpha<1$, $n=2$ and $\Gamma$ be a convex curve in $\R^2$, which has two orthogonal axes of symmetry meeting at the origin.  The following inequality holds
\begin{equation} \label{eq:intro:thm2 alpha}
E_{-\alpha}(\Gamma)=\int_{\Gamma} |x|^{-\alpha}  \dHo(x) \leq 
\frac{4^\alpha}{1-\alpha}|\Gamma|^{1-\alpha}.
\end{equation}
There is equality in  \eqref{eq:intro:thm2 alpha} if and only if $\Gamma$ is a needle.
\end{Theorem}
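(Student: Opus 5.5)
The plan is to reduce the inequality to one quarter of the curve and then compare that quarter with a straight segment through the origin, using a majorization (Karamata / Hardy--Littlewood--P\'olya type) argument. The symmetry assumption gives $\Gamma=\bigcup_{\pm,\pm}\{(\pm x,\pm y):(x,y)\in\gamma\}$, where $\gamma=\Gamma\cap\{x\geq0,y\geq0\}$ is a convex arc from $(a,0)$ to $(0,b)$ of length $L=|\Gamma|/4$. By convexity and the symmetry, $\gamma$ is the graph of a concave decreasing function. Parametrizing $\gamma$ by arclength $s\in[0,L]$ gives coordinates $x(s)$ decreasing, $y(s)$ increasing, and $|x'|+|y'|\geq 1$. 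The target \eqref{eq:intro:thm2 alpha} is then equivalent to
\begin{equation*}
\int_0^L |\gamma(s)|^{-\alpha}\dd s\leq \int_0^L t^{-\alpha}\dd t=\frac{L^{1-\alpha}}{1-\alpha},
\end{equation*}
because multiplying by $4$ gives exactly $\frac{4^\alpha}{1-\alpha}|\Gamma|^{1-\alpha}$.

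Since $g(t)=t^{-\alpha}$ is convex and decreasing, a weak majorization statement suffices. Let $r^*(l)$ denote the increasing rearrangement of $s\mapsto|\gamma(s)|$ on $[0,L]$. It is enough to show
\begin{equation*}
\int_0^l r^*(\sigma)\dd\sigma\geq \int_0^l t\dd t=\frac{l^2}{2}\qquad\text{for all } l\in[0,L],
\end{equation*}
that is, every measurable $S\subset[0,L]$ with $|S|=l$ satisfies $\int_S|\gamma|\dd s\geq l^2/2$. Given this, the standard majorization inequality for decreasing convex functions (integrating $g'$ by parts, or the Karamata argument) yields $\int_0^L g(|\gamma|)\leq\int_0^L g(t)\dd t$. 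Equality analysis in Karamata forces $r^*(t)=t$ a.e., which means $\gamma$ is a segment starting at the origin, i.e.\ $\Gamma$ is the needle. I would rather aim for this form than for the cruder distribution bound $\Hau^1(\gamma\cap B_r)\leq r$. That bound fails: for the square $[-a,a]^2$ with $r=\sqrt2 a$ the whole quarter arc of length $2a>r$ lies in $B_r$. This is why only the integrated, majorized version can be true.

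The main obstacle is the partial-integral inequality $\int_S|\gamma|\geq l^2/2$. My first attempt is Cauchy--Schwarz: $|\gamma|=|\gamma||\gamma'|\geq x|x'|+y|y'|$. Since $x$ and $y$ are monotone with nonnegative values, this gives $\int_S|\gamma|\geq \frac12(m_x^2+m_y^2)$, where $m_x,m_y$ are the measures of the $x$- and $y$-ranges swept on $S$, and $m_x+m_y\geq l$. This only yields $l^2/4$, so a factor $2$ is lost. To recover it I would use convexity together with the perpendicular crossing of the axes, which symmetry forces. Concretely, split $S$ at the point where the tangent has slope $-1$. On the part near $(a,0)$, $|y'|\leq|x'|$, so $|x'|\geq 1/\sqrt2$ and also $|\gamma|\geq x\geq$ (distance to the $y$-axis). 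By concavity of the graph, $x$ on that part is at least the arclength measured from the slope $-1$ point (up to the $\sqrt2$ factor), which compensates. I expect this bookkeeping, and checking it on the extremal configurations (square corner, diamond, needle), to be the heart of the proof.

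If the direct estimate resists, the fallback is the paper's general route: invoke the case $F(t)=t$ from Section \ref{section:p=2} and deduce the claim from Theorem \ref{Thm:general_phi}(I) with $F(t)=t^{-\alpha/2}$, which is convex and decreasing with $F(t^2)=t^{-\alpha}\in L^1((0,1))$.
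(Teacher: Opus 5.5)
Your reduction to the quarter arc and the Karamata step are sound, and so is the equality case via the equality part of Lemma \ref{prop:maj_tool}. But the whole argument rests on the weak majorization $\int_0^l r^*(\sigma)\dd\sigma\geq l^2/2$ for all $l\in[0,L]$, and you do not prove it. Cauchy--Schwarz gives only $l^2/4$. The splitting at the slope $-1$ point is a hope rather than an argument, and as written it has the geometry reversed: the graph of a concave decreasing function is steep near $(a,0)$, so there $|y'|\geq|x'|$.

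The claim is true, but it is not bookkeeping. First, you do not need arbitrary $S$. Exactly as the paper does for \eqref{eq:maj_hp_p=2}, use that $r^*$ is increasing and $1$-Lipschitz. If $G(x)=\int_0^x(r^*(t)-t)\dd t$ satisfies $G(L)\geq0$ and had a negative minimum at $x_0\in(0,L)$, then $r^*(x_0)=x_0$. Hence $r^*(t)\geq t$ on $[0,x_0]$ and $G(x_0)\geq 0$, a contradiction, so only $l=L$ matters. Second, the case $l=L$, namely $\int_{\Gamma_1}|x|\dHo(x)\geq L^2/2$, is precisely the case $p=1$ of Corollary \ref{Cor:intro:p and log ineq}, which is more than the theorem asks for. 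The paper reaches it only through the $|x|^2$ inequality of Theorem \ref{lm:quad_lemma_p=2}. One majorizes $r^{*2}$ by $t^2$ as in \eqref{eq:maj_hp_p=2} and applies Lemma \ref{prop:maj_tool} with the concave increasing function $\sqrt{\cdot}$; doing this on each $[0,l]$ also gives your claim.

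The comparison with the chord, which is what concavity gives for free, does not suffice. For the quarter of $\partial[-1,1]^2$ it yields only $\int_0^2|\gamma|\geq 2\int_0^1\sqrt{t^2+(1-t)^2}\dd t\approx1.62$, below $L^2/2=2$.

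The paper's own proof of this theorem avoids majorization altogether. Concavity puts $\gamma(s)$ componentwise above the chord point $(1-s/L)\gamma(0)+(s/L)\gamma(L)$, so $|\gamma(s)|^{-\alpha}$ is bounded by its value on the chord. Then $L\leq 1+r$ and the explicit one-variable inequality of Lemmas \ref{lm:integral_ineq_alpha} and \ref{lm:pointwise_ineq_} finish the proof, with equality handled by Lemma \ref{gy:lemma:quadrant straight line}. This works for $|x|^{-\alpha}$ because the resulting one-variable inequality is true for negative powers, while its analogue for $|x|$ fails, as the square shows.

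Your fallback, Theorem \ref{Thm:general_phi}(I) with $F(t)=t^{-\alpha/2}$, is a correct proof. However, it is the paper's second route, and it replaces your argument rather than completing it: its real input is Theorem \ref{lm:quad_lemma_p=2}.
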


We have stated the above three results of Theorem \ref{Thm:general_phi}, Corollary \ref{Cor:intro:p and log ineq} and Theorem \ref{thm:intro:alpha and n is 1}, for convex curves in $\R^2$, which have two orthogonal axes of symmetry meeting at the origin, but they immediately extend to more general curves by a straightforward one dimensional symmetrization argument.

\begin{Definition}\label{def:4 quadrant convex}
We say that a set $\Gamma\subset\R^2$ is \textit{$4$-quadrant-convex}  if $\Gamma$ is the disjoint union (up to a set of measure zero) of four sets $\Gamma_1,\ldots,\Gamma_4$. For each $i=1,\ldots,4$ there exists a rotation around the origin $O_i$ such that
$$
  O_i(\Gamma_i)=S_i\cap \{(x,y)\in\R^2;\, x\geq 0,\, y\geq 0\}
$$
for some  convex curve $S_i$ that  has two orthogonal axes of symmetry meeting at the origin. The sets $S_i$ might be empty or contained in just one of the positive axes $\{x\geq 0,\, y=0\}$ or $\{x=0,\, y\geq 0\}.$
\end{Definition}

The main example for Definition \ref{def:4 quadrant convex} we have in mind is the boundary of $\{f(x) \leq y\leq g(x),\, x\in (-a,b)\}$,  provided $a,b\geq 0$, $f$ is convex, $g$ is concave, $f(-a)\leq 0\leq  g(-a)$, $f(b)\leq 0\leq g(b)$, and $f$ attains its minimum, respectively $g$ its maximum, at the origin. But there are sets which are not necessarily simply closed  and are $4$-quadrant-convex. An example of a convex curve which is not $4$-quadrant-convex is for instance a triangle with one of its corners lying on the positive $x$ axis, one on the negative $x$-axis, and the third corner not lying on the $y$-axis (and assuming the triangle is nondegenerate). 

With Definition \ref{def:4 quadrant convex} we are able to state our final result.

\begin{Theorem}\label{thm:four quadrant convex}
Theorem \ref{Thm:general_phi} remains true if $\Gamma$ is $4$-quadrant-convex.
\end{Theorem}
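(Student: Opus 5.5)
The idea is to reduce Theorem \ref{thm:four quadrant convex} to Theorem \ref{Thm:general_phi}, applied separately to each piece of the decomposition, by exploiting that both sides of \eqref{eq:ineq_generic_phi_convex_dec} and \eqref{eq:ineq_generic_phi_concav_increase} behave additively. Let $\Gamma=\Gamma_1\cup\dots\cup\Gamma_4$ be as in Definition \ref{def:4 quadrant convex}, with rotations $O_i$ and symmetric convex curves $S_i$. Since the weight $F(|x|^2)$ and $\Hau^1$ are rotation invariant, we have $\int_{\Gamma_i}F(|x|^2)\dHo=\int_{S_i\cap Q}F(|x|^2)\dHo$, where $Q$ is the closed first quadrant. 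Because $S_i$ has the two coordinate axes as symmetry axes, this equals $\frac14 E_{F(|\cdot|^2)}(S_i)$, and likewise $|\Gamma_i|=\frac14|S_i|$. The only care needed is the degenerate situation where $S_i$ lies in an axis: the factor $\eta=2$ in the definition of $E_\phi$ for degenerate sets exactly compensates, so that $\frac14 E(S_i)$ is still the integral over the quarter piece. I will check this convention explicitly, and in particular that for a needle along an axis the quarter piece is a segment of length $|S_i|/4$.

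Next I apply Theorem \ref{Thm:general_phi}(I) to each $S_i$. For $F$ convex and decreasing,
\begin{equation*}
\int_{\Gamma}F(|x|^2)\dHo=\sum_{i=1}^4\tfrac14E(S_i)\le\sum_{i=1}^4\int_0^{\ell_i}F(t^2)\dd t,\qquad \ell_i:=|\Gamma_i|=\tfrac{|S_i|}{4}.
\end{equation*}
Empty $S_i$ contribute zero on both sides. It then remains to show
\begin{equation*}
\sum_i G(\ell_i)\le 4G\Big(\tfrac{\sum_i\ell_i}{4}\Big),\qquad G(s):=\int_0^sF(t^2)\dd t .
\end{equation*}
Since $F$ is decreasing, $G'(s)=F(s^2)$ is decreasing in $s$, so $G$ is concave on $[0,\infty)$, and Jensen's inequality gives exactly this bound. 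For case (II), $F$ is increasing, so $G$ is convex, and all inequalities reverse.

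The equality statement is where I expect the main work. Equality forces two things. First, equality in each application of Theorem \ref{Thm:general_phi} for the nonempty $S_i$, so under strict convexity (resp.\ concavity) each such $S_i$ is a needle. Second, equality in Jensen's inequality. If $F$ is strictly monotone, then $G$ is strictly concave (resp.\ convex), so all $\ell_i$ must be equal. A strictly convex decreasing $F$ is automatically strictly decreasing on its domain (excluding a constant, which is not strictly convex), and similarly in case (II). Hence each $\Gamma_i$ is a rotated segment of length $|\Gamma|/4$ starting at the origin. This is a union of four radial segments, which need not form a needle, so the conclusion must be read in that sense: equality holds exactly for such configurations. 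Conversely, every such configuration attains equality, since each quarter piece is a segment from the origin.

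For the last assertion of Theorem \ref{Thm:general_phi}, I would check that the extra concavity (resp.\ convexity) of $t\mapsto F(t^2)$ lets the same piecewise argument run with $S_i$ symmetric only about axes not necessarily through the origin, up to an additional translation. If that check fails, I would state the extension only for the main inequalities.
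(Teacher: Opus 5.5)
Your proof is correct and follows the paper's argument essentially verbatim. You use $4E_{F(|\cdot|^2)}(\Gamma_i)=E_{F(|\cdot|^2)}(S_i)$ and $|\Gamma_i|=|S_i|/4$, apply Theorem \ref{Thm:general_phi} to each $S_i$, sum, and conclude with concavity (resp.\ convexity) of $x\mapsto\int_0^x F(t^2)\dd t$; strict monotonicity of $F$ then forces all $|\Gamma_i|$ to be equal in the equality case.

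Your reading of that equality case matches what the paper's proof actually establishes: each $S_i$ is a needle and $|\Gamma_i|=|\Gamma|/4$, so $\Gamma$ is four equal radial segments, not necessarily a needle. The paper's proof likewise does not treat the clause about symmetry axes not meeting at the origin.
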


Let us make a brief comment on the proofs. Many of the methods we apply seem  new and several of them have been developed from scratch to tackle these kind of problems.   Even the proof of Theorem \ref{Thm:intro:existence1} when restricted to (i), $\phi(t)=t^2$ and $n=3$, is not the same one as the existence theorem provided by \cite{FreitasLaugesenLiddell}, because for general weights $\phi$ one cannot assume the monotonicity of $E_{\phi}$ under inclusion. However there are two exceptions, where we apply standard methods of the calculus of variations. Firstly, the proofs of the two statements in Theorem \ref{Thm:intro:existence1} showing that for certain ranges of values of $p$ and $\alpha$ the extremals are nondegenerate is based on a straightforward but  tedious calculation of the first and second variation of  cylinders. Finally, the proof of Theorem \ref{thm:shifting_generalized} is also a consequence of calculating the second variation of the family of competitors obtained by shifting a maximizer in all possible directions.

As perhaps a highlight of our new methods, we would like to point out the proof of Theorem \ref{Thm:general_phi}. To the best of our knowledge this is where for the first time majorization theory, more precisely a continuous version of Karamata's inequality, originally due to Hardy, Littlewood and Polya \cite{hardy1929some}, combined with a rearrangement argument, is used to solve such a shape optimization problem. Its power is that using a result on $E_2$, one can solve in one stroke the corresponding  minimization problems for  $E_p$ on the entire range $p\in (0,2)$, as well as the maximization problem for $E_{-\alpha}$ for all $\alpha\in (0,1).$ 

Last but not least, let us point out on the example of Theorem \ref{thm:intro:alpha and n is 1}, that what we are dealing with in this paper are sharp functional inequalities of the sort
\begin{equation}\label{intro:functional ineq}
  \int_0^L \frac{\sqrt{1+f'(x)^2}}{(x^2+f(x)^2)^{\alpha/2}}\dd x \leq \frac{1}{1-\alpha} \left(\int_0^L \sqrt{1+f'(x)^2} \dd x \right)^{1-\alpha}
\end{equation}
for every nonnegative, concave decreasing function $f$ on $[0,L]$, with equality if and only if $f=0.$ It is easy to see that all of  the three conditions (nonnegative, concave, decreasing) are necessary for this inequality to hold.

\subsection{Notation.}
If $A$ is a subset of $\R^n$, then $\p A$ denotes its topological boundary in $\R^n$. In particular if $A\subset\R^n$ is a degenerate convex set, then $\p A=A.$ 

Throughout the paper $B_r^k(x_0)$ is the $k$-dimensional ball in $\R^k$ of radius $r$ and center $x_0$. If $k=n,$ we omit the index $k$ and just write $B_r(x_0).$  Whenever the center is not specified we intend it to be the origin. We use the notation $\omega_{k}:=\mathcal{H}^{k}(B_1^k)$ and $\beta_{k-1}:=\mathcal{H}^{k-1}(\p B_1^k)$, recalling the relation $\beta_{k-1}= k \omega_k$. 

We recall for convenience the notion of Hausdorff distance $d_H$. 
For $\Omega_1,\Omega_2 \subset \R^n$
\[
d_H(\Omega_1,\Omega_2):= \inf \{\delta >0 \, : \, \Omega_1 \subset \Omega_2 + B_\delta, \, \Omega_2 \subset \Omega_1 + B_\delta \}
\]
where we use the notation of Minkowski sum between sets. 

We also recall that we call in this paper a function $f$ \textit{increasing} if $f(x)\leq f(y)$ for all $x<y$ and do not employ the commonly used word nondecreasing. If the inequality is strict, we will say so explicitly.

\smallskip
\section{Results in arbitrary dimensions}

\subsection{Existence of Extremals} \label{Sec:existence}

This section is devoted to proving the existence of extremals for the weighted energy functionals $E_\phi$ in Theorem \ref{Thm:intro:existence1}. The statement of the theorem on nondegeneracy will be proven in Section \ref{Sec:non_degeneracy}.

At one step of the proof we will need the continuity of the energy functionals $\Omega\mapsto E_\phi(\p\Omega)$ with respect to the convergence of a sequence of convex bodies $\{\Omega_i\}_{i\geq1}$ in the  Hausdorff metric (also called Hausdorff distance). This result is not very surprising but somewhat technical, so we have put it into a companion paper \cite{CG_part2}.  
It generalizes the well-known result on the continuity of the standard perimeter ($\phi\equiv1$) of convex bodies with respect to the Hausdorff metric, which can be found in \cite[Result 3.2.36, p. 272]{federer}; or see also \cite[Section 2.3]{Bucur} for another proof. In the case $\phi(t)=t^2$ and $n=3$ the continuity has already been commented upon in \cite{FreitasLaugesenLiddell}.

It is in Theorem \ref{lm:continuity_energy} that the assumption of absolute continuity in the definition of the weight function is used, as well as the factor $\eta_{\Omega}=2$ in Definition \ref{intro:gy:weight function} of $E_{\phi}$ in case that $\Omega$ is degenerate. 

\begin{Theorem}[Continuity of the energy functional]\label{lm:continuity_energy}
       Let $\phi$ be a weight function according to Definition \ref{intro:gy:weight function} and 
       $\{\Omega_i\}_{i \geq 1}$ be a sequence of bounded convex bodies. If $\Omega_i$ converges to $ \Omega$ for $i \to \infty$ in the Hausdorff metric then
       \[
       \lim_{i\to\infty} E_\phi(\p\Omega_i)= E_{\phi}( \p\Omega) .
       \]
\end{Theorem}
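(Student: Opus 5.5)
We plan to reduce the statement to weak convergence of the surface measures $\mu_i:=\eta_{\Omega_i}\Hau^{n-1}\llcorner\p\Omega_i$ towards $\mu:=\eta_\Omega\Hau^{n-1}\llcorner\p\Omega$. We then handle the possible singularity or unboundedness of $\phi$ near $t=0$ by a uniform integrability estimate.

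\textbf{Step 1: weak convergence.} We first show that $\mu_i\rightharpoonup\mu$ weakly as measures, i.e.\ $\int g\,\dd\mu_i\to\int g\,\dd\mu$ for every $g\in C_c(\R^n)$.

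If $\Omega$ is nondegenerate, then $\Omega_i$ is nondegenerate for large $i$. In this case the masses converge, $|\p\Omega_i|\to|\p\Omega|$, by the classical continuity of perimeter \cite{federer}. The supports converge in the Hausdorff sense, and any weak limit $\nu$ of $\mu_i$ is supported on $\p\Omega$. To identify $\nu$, we use the Cauchy/Crofton formula localized: for an open set $U$, $\Hau^{n-1}(\p K\cap U)$ equals an average over directions $\theta\in S^{n-1}$ of the projected measure of $\p K\cap U$ counted with multiplicity $2$. Combining this with lower semicontinuity gives $\nu\geq\Hau^{n-1}\llcorner\p\Omega$, and equality of total masses then yields $\nu=\Hau^{n-1}\llcorner\p\Omega$.

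If $\Omega$ is degenerate and contained in a hyperplane $H$, the same Crofton argument applies. Now each projection line meets $\p\Omega_i$ in two points that both converge to the same point of $\Omega$, and the total mass satisfies $|\p\Omega_i|\to2\Hau^{n-1}(\Omega)$. Together these give the factor $\eta_\Omega=2$. A degenerate $\Omega_i$ counts twice by definition, so it is consistent with this.

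\textbf{Step 2: truncation.} Fix $\varepsilon>0$ and split $\phi=\phi\one_{[\delta,\infty)}+\phi\one_{(0,\delta)}$, but smoothed. Concretely, set $\phi_\delta:=\phi$ on $[\delta,R]$, extended continuously and constantly to $[0,\delta]$. Here $R$ bounds $\sup_i\operatorname{diam}(\Omega_i\cup\{0\})$, which is finite by Hausdorff convergence. Since $\phi$ is absolutely continuous, hence continuous, on $(0,\infty)$, the function $x\mapsto\phi_\delta(|x|)$ is continuous on a neighborhood of all the sets. Step 1 then gives $\int\phi_\delta\,\dd\mu_i\to\int\phi_\delta\,\dd\mu$.

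\textbf{Step 3: uniform smallness near the origin (main obstacle).} We must show
$$\sup_i\int_{\p\Omega_i\cap B_\delta}|\phi(|x|)|\dd\Hau^{n-1}\to0\quad\text{as }\delta\to0,$$
together with the analogous estimate for $\mu$.

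The key geometric fact we intend to use is a uniform density bound for boundaries of convex bodies: $\Hau^{n-1}(\p K\cap B_r(y))\leq\beta_{n-1}r^{n-1}$ for every convex $K$, every $y$ and every $r>0$. This holds because $K\cap \overline{B_r(y)}$ is convex with perimeter at most $|\p B_r|$ by monotonicity of perimeter under inclusion of convex sets, and $\p K\cap B_r(y)\subset\p(K\cap\overline{B_r(y)})$. For degenerate $K$ we get the same bound with the factor $2$, since $2\Hau^{n-1}(K\cap B_r)\leq 2\omega_{n-1}r^{n-1}$.

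With this bound, the layer-cake/coarea formula applied to the distribution function $m_i(r):=\mu_i(B_r)\leq C r^{n-1}$ gives
$$\int_{B_\delta}|\phi(|x|)|\dd\mu_i\leq C\int_0^\delta|\phi(t)|t^{n-2}\dd t+\text{boundary terms}.$$
Handling these boundary terms is exactly where we need absolute continuity of $\phi$: we integrate by parts in Stieltjes form against $m_i$, using $|\phi|'$, and bound the resulting terms by $Ct^{n-1}|\phi'|$. To avoid controlling $\phi'$, we will try first to replace $|\phi|$ by its decreasing majorant near $0$. If $\phi$ is not monotone, this majorant may fail the integrability condition \eqref{eq:integrability_phi}, and that is the delicate point.

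Once the smallness estimate is established, the right-hand side tends to $0$ by \eqref{eq:integrability_phi}. Combining Steps 2 and 3 with an $\varepsilon/3$ argument concludes the proof.
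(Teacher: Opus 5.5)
Steps 1 and 2 are fine, and your density bound $\mathcal{H}^{n-1}(\partial K\cap B_r(y))\le Cr^{n-1}$ is correct. Step 3, however, is a genuine gap, and neither of your routes closes it under Definition \ref{intro:gy:weight function} alone.

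\emph{Why both routes fail.} The density bound only says that the radial push-forward $\nu_i$ of $\mu_i$ satisfies $\nu_i([0,r])\le Cr^{n-1}$. This still lets $\nu_i$ carry mass of order $t^{n-1}$ at a single radius $t$ (think of $\partial B_t$). By the layer-cake formula it therefore bounds $\int_{B_\delta} g(|x|)\,\mathrm{d}\mu_i$ by $C\int_0^\delta g(t)t^{n-2}\,\mathrm{d}t$ only for decreasing $g$. That is why you are pushed to a decreasing majorant, which may violate \eqref{eq:integrability_phi}. The Stieltjes route needs $\int_0^1|\phi'(t)|t^{n-1}\,\mathrm{d}t<\infty$. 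Absolute continuity on $(0,\infty)$ does not give this: it must be read locally (otherwise $t^{-\alpha}$ would not be admissible), and so it gives no control at $0$.

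\emph{The statement itself fails in this generality.} Take $n=2$ and let $\phi\ge 0$ consist of disjoint triangular spikes of height $h_k=2^{2k}$ and half-width $w_k=2^{-3k}$ centered at $t_k=2^{-k}$, $k\ge 1$, with $\phi=0$ elsewhere. Then $\phi$ is locally Lipschitz on $(0,\infty)$ and $\int_0^1\phi\,\mathrm{d}t=\sum_k 2^{-k}<\infty$. Yet $\overline{B_{t_k}}\to\{0\}$ in the Hausdorff metric, while $E_\phi(\partial B_{t_k})=2\pi t_k h_k=2\pi\,2^{k}\to\infty\ne 0=E_\phi(\partial\{0\})$. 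Nondegenerate limits do not rescue it. Take $h_k=2^{3k}$, $w_k=2^{-4k}$, $\Omega=[-1,1]\times[0,1]$ and $\Omega_k=\mathrm{conv}(\Omega\cup\overline{B_{t_k}})$. Then $\partial\Omega_k$ contains an arc of $\partial B_{t_k}$ of length about $2t_k^2$, so $E_\phi(\partial\Omega_k)\gtrsim 2^{k}\to\infty$ while $E_\phi(\partial\Omega)<\infty$.

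\emph{What fixes it.} An extra hypothesis is needed, and with it your plan goes through.
\begin{itemize}
\item[(a)] Suppose $|\phi|\le\psi$ on some $(0,\delta_0)$, with $\psi$ decreasing and $\int_0^{\delta_0}\psi(t)t^{n-2}\,\mathrm{d}t<\infty$. This holds for every monotone weight, hence for everything used in Theorem \ref{Thm:intro:existence1}, and for $t^p$, $t^{-\alpha}$, $-\log t$. Layer cake plus your density bound then give $\sup_i\int_{B_\delta}|\phi(|x|)|\,\mathrm{d}\mu_i\le C\int_0^\delta\psi(t)t^{n-2}\,\mathrm{d}t$. The inequality $\delta^{n-1}\psi(\delta)\le (n-1)\int_0^\delta\psi(t)t^{n-2}\,\mathrm{d}t$ handles the term $|\phi(\delta)|\mu_i(B_\delta)$ produced by your truncation. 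The same bounds hold for $\mu$.
\item[(b)] Alternatively, assume $\int_0^1|\phi'(t)|t^{n-1}\,\mathrm{d}t<\infty$. Then your integration by parts is rigorous, because $t^{n-1}\phi(t)\to 0$ as $t\to 0$.
\end{itemize}

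The paper does not prove this theorem itself (it defers the proof to a companion paper), so there is no in-paper argument to compare your route with.
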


A fundamental dichotomy between increasing and decreasing weights lies in their monotonicity with respect to set inclusion. For instance, for increasing and nonnegative weights, the energy $E_\phi$ increases with inclusion, provided the inner set contains the origin. In contrast, this monotonicity generally fails for decreasing weights. Note that the requirement that the origin lies inside the inner set avoids any contradiction with the non-monotonicity result in \cite[Theorem 1.1]{saracco2024monotonicity}, where it has been proven that if  $E_\phi(\p A)\leq E_\phi(\p B)$ for every pair of convex bodies $A\subset B$, then $\phi$ must be constant. We state our monotonicity theorem here without proof, deferring it to our companion paper \cite{CG_part2}, as we will not need it in what follows. However, using monotonicity, when available, can provide a simpler proof for the existence of minimizers, as we shall explain after the theorem.

\begin{Theorem}[Monotonicity under inclusion for $E_\phi$] \label{l:monotonicity}
     Let $\phi$ be a  nonnegative and increasing function. If $\Omega_1,$ $\Omega_2$ are two convex bodies such that $0 \in \Omega_1 \subset \Omega_2$, then $E_\phi(\p \Omega_1) \le E_\phi(\p \Omega_2)$. 
\end{Theorem}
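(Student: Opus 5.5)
Since $\p\Omega_1$ and $\p\Omega_2$ are rectifiable, I will compare them through the nearest-point projection. Let $\pi:\R^n\to\Omega_1$ be the metric projection onto the closed convex set $\Omega_1$. It is $1$-Lipschitz and maps $\p\Omega_2$ onto $\p\Omega_1$. For surjectivity: given $y\in\p\Omega_1$, take an outer normal $\nu$ at $y$ and follow the ray $y+t\nu$, $t\ge 0$. This ray leaves the bounded set $\Omega_2$, so it meets $\p\Omega_2$ at some point $z$, and $\pi(z)=y$. The area formula, or simply the $1$-Lipschitz property of $\pi$, gives
\[
\int_{\p\Omega_1} g \dH \le \int_{\p\Omega_2} g\circ\pi \dH
\]
for every nonnegative Borel $g$. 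The fibre-counting multiplicity is at least one, and the Jacobian of the restriction of $\pi$ to $\p\Omega_2$ is at most one.

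I apply this with $g(y)=\phi(|y|)$, which is nonnegative by assumption. The next step is the pointwise bound $\phi(|\pi(z)|)\le \phi(|z|)$ for $z\in\p\Omega_2$, and this is exactly where $0\in\Omega_1$ enters. Because $\pi$ is the projection onto a convex set containing $0$, the variational inequality $\langle z-\pi(z),\,0-\pi(z)\rangle\le 0$ holds. Hence
\[
|z|^2=|z-\pi(z)|^2+|\pi(z)|^2+2\langle z-\pi(z),\pi(z)\rangle\ge |\pi(z)|^2,
\]
so $|\pi(z)|\le|z|$. Since $\phi$ is increasing, $\phi(|\pi(z)|)\le\phi(|z|)$. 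Combining the two steps gives $\int_{\p\Omega_1}\phi(|x|)\le\int_{\p\Omega_2}\phi(|x|)$.

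It remains to handle the factors $\eta$, and this is the main obstacle. If both sets are nondegenerate, or both are degenerate, the factors agree and we are done. If $\Omega_2$ is degenerate, then $\Omega_1$ is degenerate as well, since it is contained in the same hyperplane. The delicate case is $\Omega_1$ degenerate and $\Omega_2$ not, where we need the factor $2$ on the left. Here the surjectivity argument gives multiplicity at least $2$. At $\Hau^{n-1}$-a.e.\ $y$ in the relative interior of $\Omega_1$, which lies in a hyperplane with unit normal $\pm e$, both rays $y\pm te$ hit $\p\Omega_2$ at distinct points projecting to $y$. Therefore
\[
2\int_{\p\Omega_1} g \dH\le\int_{\p\Omega_2}g\circ\pi \dH.
\]
To make this rigorous, I would apply the coarea/area formula with multiplicity $N(\pi|_{\p\Omega_2},y)\ge2$, or alternatively use Theorem~\ref{lm:continuity_energy}: approximate $\Omega_1$ by thin nondegenerate convex bodies $\Omega_1^\varepsilon\subset\Omega_2$ containing $0$, for instance $(1-\varepsilon)\Omega_1+\varepsilon\,\delta B$ with $\delta$ small, and pass to the limit.
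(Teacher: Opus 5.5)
The paper itself does not prove this statement. It is quoted without proof and deferred to the companion paper \cite{CG_part2}, so there is no argument in the text to compare yours with. Judged on its own, your proof is correct.

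The nearest-point projection $\pi$ onto $\Omega_1$ is $1$-Lipschitz, and your normal-ray argument shows $\pi(\p\Omega_2)\supseteq\p\Omega_1$. That is all you need for $\Hau^{n-1}(B)\le\Hau^{n-1}(\pi^{-1}(B)\cap\p\Omega_2)$ for Borel $B\subset\p\Omega_1$, and hence for the integral inequality with $g\ge 0$. The variational inequality tested with $w=0\in\Omega_1$ then gives $|\pi(z)|\le|z|$.

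Each hypothesis is used exactly where the paper says it is indispensable:
\begin{itemize}
\item nonnegativity of $\phi$ in the push-forward comparison (cf.\ the example $\phi(t)=t-1$);
\item $0\in\Omega_1$ in the radial contraction (cf.\ \cite{saracco2024monotonicity}).
\end{itemize}
The argument needs no regularity of $\phi$, since monotone functions are Borel. If $\phi$ is only given on $(0,\infty)$, set $\phi(0):=\inf\phi$. This matters because when $0$ is a vertex of $\Omega_1$, a subset of $\p\Omega_2$ of positive measure may project onto $0$.

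Two details should be tightened.

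\emph{Distinct preimages in the degenerate case.} Take $\Omega_1\subset H=e^\perp$ degenerate and $\Omega_2$ nondegenerate. You assert, but do not justify, that the two preimages $y+t_+e$ and $y-t_-e$ are distinct. The justification is as follows:
\begin{itemize}
\item the chord length $t_++t_-$ of $\Omega_2\cap(y+\R e)$ is a concave function on the orthogonal projection of $\Omega_2$ onto $H$;
\item it is positive somewhere because $\Omega_2$ has interior, hence positive on the relative interior of that projection;
\item that relative interior contains $\mathrm{relint}\,\Omega_1$. If $\Omega_1$ is not $(n-1)$-dimensional there is nothing to prove.
\end{itemize}
The area formula on the rectifiable set $\p\Omega_2$, with multiplicity at least $2$ on $\mathrm{relint}\,\Omega_1$ and tangential Jacobian at most $1$, then gives the factor $2$.

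\emph{The alternative approximation.} As written it is flawed. The inclusion $(1-\varepsilon)\Omega_1+\varepsilon\delta B\subset\Omega_2$ requires $0\in\interior\Omega_2$. This fails, for instance, when $\Omega_1$ lies in a facet of $\Omega_2$ through the origin. Use $(1-\varepsilon)\Omega_1+\varepsilon\Omega_2$ instead: it is nondegenerate, contains $0$, lies in $\Omega_2$, and converges to $\Omega_1$ in the Hausdorff metric. Note, however, that this route invokes Theorem~\ref{lm:continuity_energy}, and hence the weight-function assumptions on $\phi$. Your projection argument does not need them.
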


Nonnegativity is also a necessary assumption of the theorem, since monotonicity with respect to inclusion cannot hold if $\phi$ changes sign. To see this, consider, for example,
$\phi(t)=t-1$, which is obviously increasing in $t.$ However, $E_\phi(\p B_{1/4}) > E_\phi(\p B_{1/3})$ and $E_\phi(\p B_1) < E_\phi(\p B_2)$.

As remarked, such monotonicity cannot hold for decreasing weights. As an example, it is easy to find two convex sets $\Omega_1$ and $\Omega_2$ both containing the origin and contained in the ball $B_1\subset\R^n$ such that $E_{-\alpha}(\p \Omega_1)<E_{-\alpha}(\p B_1)$ but $E_{-\alpha}(\p \Omega_2)>E_{-\alpha}(\p B_1)$. More precisely, for every ball $B_{r}$ with $0<r<1$ it holds that $E_{-\alpha}(\p B_{r}) < E_{-\alpha}( \p B_{1})$. But if we take $0 \in B_{1}^{n-1} \times \{ 0\} =\{ (x',0) \, : \, |x'|\leq 1 \}\subset B_1 \subset \R^n$ then 
\[
E_{-\alpha}( B_{1}^{n-1} \times \{ 0\} ) > E_{-\alpha}( \p B_{1}) \quad \text{ for all $\alpha$ sufficiently close to $n-1$}.
\]

We now shift our focus to prove the existence of extremals via the direct method. For this we must guarantee the compactness of minimizing (or maximizing) sequences. In dimension $n=2$, this is trivial since the diameter of a convex set is universally bounded by its perimeter. However, for $n \ge 3$, a sequence of convex bodies can have constant perimeter while stretching out to infinity. To prevent this loss of compactness under the constraint $E_0(\p \Omega)=1$, in Lemma \ref{lm:bound_on_max_radius_decreasing} we establish a crucial bound relating the  energy to the maximal radius of the set.  

For increasing and nonnegative weights the monotonicity property of Theorem \ref{l:monotonicity} is available and the proof of compactness simplifies. We carry out this alternative proof in \cite{CG_part2}. 
The strategy in such a case is to generalize the result obtained in \cite[Lemma 8]{FreitasLaugesenLiddell} where the authors deal with the case of the weight $|x|^2$ in $\R^3$.
The idea is to bound the energy of the convex body $\Omega$ from below by comparing it to the energy of a suitably constructed simplex contained within it and exploit monotonicity.

In the absence of monotonicity of the energy with respect to inclusion due to either the negativity of the weight or due to a decreasing weight function, we bound the energy of the convex body $\Omega$ by the sum of the energies of its projections onto the $n$ canonical hyperplanes. 
The idea of considering projections onto planes was also employed in \cite{Csato2024} to prove that the energy $E_{-\alpha}$ is bounded. At the same time, we still need the above mentioned simplices contained in $\Omega$ to estimate the standard perimeter from below. 

We start with a preliminary technical lemma, before stating the main Lemma \ref{lm:bound_on_max_radius_decreasing}.

\begin{Lemma}\label{lm:bathtub_principle}
    Let $k \in \{1, 2, \dots, n\}$ and let $\phi: [0, \infty) \to \R$ be an increasing (resp. decreasing) function such that $z \mapsto \phi(|z|)$ is locally integrable in $\R^k$. Assume $v \in L^{\infty}(\R^k)$ has compact support, satisfies $0 \leq v \leq K$ a.e. in $\R^k$, and
    $$
    \int_{\R^k} v(z) \dd z = D,
    $$
    for some constants $K>0$ and $D\geq 0$. If $r\geq0$ is chosen such that the $k$-dimensional volume of the ball satisfies $\mathcal{H}^k(B_r^k) = \frac{D}{K}$, then
    \begin{equation}
        \int_{\R^k} \phi(|z|) v(z) \dd z \geq K \int_{B_r^k} \phi(|z|) \dd z \quad \text{(resp. $\leq$)}.
    \end{equation}
\end{Lemma}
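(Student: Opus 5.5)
This is the bathtub principle. We treat the increasing case; the decreasing case follows by replacing $\phi$ with $-\phi$, or by reversing every inequality below. Set $B:=B_r^k$, so that $K\,\mathcal{H}^k(B)=D=\int_{\R^k} v\dd z$. The idea is to compare $v$ with the extremal profile $K\one_B$: both have the same total mass, and the extremal profile puts all of its mass where $\phi(|z|)$ is smallest. We therefore write
$$
\int_{\R^k}\phi(|z|)v\dd z - K\int_{B}\phi(|z|)\dd z=\int_{\R^k}\phi(|z|)\bigl(v-K\one_B\bigr)\dd z .
$$
We then split this integral over $B$ and over $\R^k\setminus B$.

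On $B$ we have $v-K\le 0$ and $\phi(|z|)\le\phi(r)$. On the complement we have $v\ge 0$ and $\phi(|z|)\ge\phi(r)$. Hence the integrand satisfies $\phi(|z|)(v-K\one_B)\ge \phi(r)(v-K\one_B)$ pointwise on all of $\R^k$. Integrating, and using $\int_{\R^k}(v-K\one_B)\dd z=D-D=0$, gives a difference $\ge 0$, which is the claim. For decreasing $\phi$ the pointwise inequality reverses, and so does the conclusion.

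The only point needing care is integrability, so that the splitting is legitimate and no $\infty-\infty$ arises. The function $\phi(|z|)$ is locally integrable. The function $v$ is bounded with compact support, and $\one_B$ is bounded with bounded support. Hence $\phi(|z|)v$ and $\phi(|z|)\one_B$ are both in $L^1$, and $\int_{\R^k}(v-K\one_B)\dd z=0$ is finite. If $\phi(r)$ is not finite this needs separate treatment, but it cannot happen: $\phi$ is real valued on $[0,\infty)$.

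The degenerate case $D=0$ gives $r=0$ and $v=0$ a.e., so both sides vanish. When $r=0$ with $k$ arbitrary there is no issue either, since the ball is then a null set.

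I expect no real obstacle. The only subtlety is the value $\phi(0)$ when $r=0$, and this is avoided by noting that $\phi$ enters only through the comparison with the constant $\phi(r)$ on the null-set boundary.
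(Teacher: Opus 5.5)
Your proof is correct and is essentially the paper's argument. You compare $v$ with $w=K\one_{B_r^k}$, use that $(\phi(|z|)-\phi(r))(v-w)\ge 0$ a.e. (since $v\le w$ on the ball and $v\ge w$ off it), integrate, apply $\int_{\R^k}(v-w)\dd z=0$, and get the decreasing case from $-\phi$. Your explicit integrability check is a detail the paper leaves implicit, and your closing remark about $\phi(0)$ is unnecessary since $\phi$ is real-valued on $[0,\infty)$.
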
 

\begin{proof}
    The case $D=0$ is trivial, since then $v \equiv 0$ a.e. and $r=0$.  

    Let $w(z)=K\one_{B_r^k}(z)$. Since $\int_{\R^k} w(z) \dd z = D$, we have by assumption that
    \begin{equation}\label{eq:bathtub_zero_mass}
        \int_{\R^k} (v(z)-w(z)) \dd z = 0.
    \end{equation}
     Since $\phi$ is increasing, $v \leq w$ a.e. in $B_r^k$, and $v \geq w$ a.e. in $\R^k \setminus B_r^k$, we deduce that the function $z \mapsto \big(\phi(|z|) - \phi(r)\big) \big(v(z) - w(z)\big)$ is nonnegative a.e.  in $\R^k$.  Hence we have 
\begin{equation*}\label{eq:bathtub_positive_integral}
        \int_{\R^k} \big(\phi(|z|) - \phi(r)\big) \big(v(z) - w(z)\big) \dd z \geq 0.
    \end{equation*}
  Expanding the product and using \eqref{eq:bathtub_zero_mass} yields
    $$
    \int_{\R^k} \phi(|z|) \big(v(z) - w(z)\big) \dd z \geq 0,
    $$
    which gives the desired result.
    The decreasing case follows by applying the previous result to $-\phi$.
\end{proof}


We now state the result which will ensure the compactness of the maximizing/minimizing sequence. We use therein the common notation 
$$
  \phi^+=\max (0,\phi),\quad\quad \phi^-=\max(0,-\phi)\quad\text{ and }\quad \phi=\phi^+-\phi^-.
$$

\begin{Lemma} \label{lm:bound_on_max_radius_decreasing}
    Let $n\geq 3$ and let $\Omega$ be a convex body in $\mathbb{R}^n$ with $\mathcal{H}^{n-1}(\p \Omega) >0$ such that $0 \in {\Omega}$. Define $R:=\max \{|x| : x \in \p \Omega\}$. There exist positive constants $C_0, C_1, \gamma$ depending only on the dimension $n$, such that, defining the radius
    \begin{equation} \label{eq:def_radius_r}
        r := \left( C_0 \frac{E_0(\p \Omega)}{R} \right)^{\frac{1}{n-2}},
    \end{equation}
    the following statements hold.
    
  If $\phi$ is an increasing weight function  then 
    \begin{equation}\label{eq:bound_Ephi_by_R_inc}
        \frac{E_{\phi}(\p \Omega)}{E_0(\p \Omega)} \geq \frac{1}{\gamma R} \int_0^{\gamma R} \phi^+(t) \dd t -  \frac{C_1}{E_0(\p \Omega)} \int_{-R}^R \int_{B^{n-2}_{r}} \phi^-(|x'|) \dd x'.
    \end{equation}
If $\phi$ is a decreasing weight function, then 
    \begin{equation}\label{eq:bound_Ephi_by_R_dec}
      \frac{E_{\phi}(\p \Omega)}{E_0(\p \Omega)} \leq \frac{C_1}{E_{0}(\p \Omega)} \int_{-R}^R \int_{B^{n-2}_{r}} \phi^+(|x'|) \dd x' - \frac{1}{\gamma R} \int_0^{\gamma R} \phi^-(t) \dd t.
    \end{equation}
    
    Moreover, for any sequence of convex bodies $\{\Omega_i\}_{i\geq 1}$ with $E_0(\p \Omega_i)>0$ constant for all $i$ and $R_i \to \infty$, and for any increasing (resp. decreasing) weight $\phi$, it holds
    $$\lim_{i \to \infty} \frac{E_\phi( \p \Omega_i)}{E_0(\p \Omega_i)} = \sup_{t\geq 0} \phi(t), \quad \text{(resp.} \ =\inf_{t\geq 0} \phi(t)).$$
\end{Lemma}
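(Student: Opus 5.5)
The plan is to split $\phi=\phi^+-\phi^-$ and treat the two parts by different mechanisms. For an increasing weight, $\phi^+$ is increasing and $\phi^-$ is decreasing. The part $\phi^+$ will be bounded from below by a \emph{mass-distribution} argument. The part $\phi^-$ will be bounded from above by the projection method of \cite{Csato2024} combined with Lemma \ref{lm:bathtub_principle}. The decreasing case follows by the same two arguments with the roles of $\phi^\pm$ exchanged, so I only describe the increasing case.

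\emph{Lower bound for $\int_{\p\Omega}\phi^+$.} Let $\mu=\Hau^{n-1}\llcorner\p\Omega$; its total mass is $E_0(\p\Omega)$. The key geometric estimate I would aim for is
\begin{equation*}
\mu(B_t)\leq \frac{t}{\gamma R}\,E_0(\p\Omega)\qquad\text{for all } 0<t\leq \gamma R .
\end{equation*}
To prove it, pick $x_0\in\p\Omega$ with $|x_0|=R$ and set $K=\Omega\cap \overline{B_t}$. Since $\p\Omega\cap B_t\subset \p K$, we have $\mu(B_t)\leq \Hau^{n-1}(\p K)$. Moreover $\operatorname{conv}(K\cup\{x_0\})\subset\Omega$, and the perimeter of convex bodies is monotone under inclusion. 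It therefore suffices to show that a cone with base $K\subset B_t$ and apex at distance $R\geq t/\gamma$ has perimeter at least $c\,(R/t)\,\Hau^{n-1}(\p K)$. I would attempt this via Cauchy's projection formula: the perimeter is an average of $(n-1)$-volumes of projections. For most directions, the projection of the cone contains the projection of $K$ stretched by a factor of order $R/t$ along the projected direction of $x_0$. This cone estimate is the step I expect to be the main obstacle, since it must be uniform over degenerate $K$.

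Granting the cone estimate, the distribution function of $|x|$ under $\mu$ is dominated by that of the uniform measure of total mass $E_0$ on $[0,\gamma R]$. A layer-cake argument then gives $\int\phi^+(|x|)\,d\mu\geq \frac{E_0}{\gamma R}\int_0^{\gamma R}\phi^+$, because $\phi^+$ is increasing.

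\emph{Upper bound for $\int_{\p\Omega}\phi^-$.} Write $P_i$ for the projection onto the hyperplane $\{x_i=0\}$ and $\nu$ for the outer normal. Since $1\leq\sum_i|\nu_i|$, we get $\int\phi^-(|x|)\,d\mu\leq\sum_i\int\phi^-(|x|)|\nu_i|\,d\mu$. Because $|P_ix|\leq|x|$ and $\phi^-$ is decreasing, $\phi^-(|x|)\leq \phi^-(|P_ix|)$. By the area formula and convexity, each fiber meets $\p\Omega$ at most twice, so the $i$-th term is at most $2\int_{P_i\Omega}\phi^-(|y|)\,dy$. Here $P_i\Omega\subset B^{n-1}_R$ and $\Hau^{n-1}(P_i\Omega)\leq E_0/2$.

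Now split $y=(s,y'')$ with $s\in[-R,R]$ and use $\phi^-(|y|)\leq\phi^-(|y''|)$. The fiber length $v(y'')$ satisfies $0\leq v\leq 2R$ and $\int v\leq E_0/2$. Lemma \ref{lm:bathtub_principle}, applied with $k=n-2$, $K=2R$ and the decreasing function $\phi^-$, gives a bound by $2R\int_{B_r^{n-2}}\phi^-$ with $\omega_{n-2}r^{n-2}=E_0/(4R)$. This produces \eqref{eq:def_radius_r} and \eqref{eq:bound_Ephi_by_R_inc}. (If $\int v<E_0/2$, enlarge $v$ or use monotonicity in $D$.)

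\emph{The limit.} Trivially $E_\phi/E_0\leq\sup\phi$. As $R_i\to\infty$, the average $\frac1{\gamma R}\int_0^{\gamma R}\phi^+$ tends to $\sup\phi^+$. It remains to show that the $\phi^-$ term tends to $0$, i.e. $R\int_{B_r^{n-2}}\phi^-\to0$ with $r^{n-2}\sim 1/R$; it is only needed when $\sup\phi\le 0$ fails or for the exact value. This requires $\int_{B_r^{n-2}}\phi^-=o(r^{n-2})$, which is not implied by \eqref{eq:integrability_phi} alone. I would therefore refine the bathtub step: use the bound $\phi^-(|y|)\leq\phi^-(\max(|s|,|y''|))$ and keep the $s$-variable, so that the comparison set is essentially an $(n-1)$-dimensional region. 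On that region, integrability of $\phi(|x'|)$ in $\R^{n-1}$ does give a vanishing contribution as the admissible mass near the origin shrinks.
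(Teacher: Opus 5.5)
Your lower bound for $\int\phi^+\,d\mu$ takes a genuinely different route from the paper, and it can be completed. The paper encloses $\Omega$ in the box $C_{R,b}$ built from the simplex $\Delta$, writes $\p\Omega$ as graphs of bounded slope, bounds the marginal $V(x_1)\le CE_0(\p\Omega)/R$, and applies Lemma \ref{lm:bathtub_principle} in one variable. You instead use the coordinate-free estimate $\mu(B_t)\le \frac{t}{\gamma R}E_0(\p\Omega)$. The cone estimate you flag does hold. Restrict Cauchy's formula to directions $u$ at angle at least $\pi/6$ from $x_0$; these still see a fixed fraction of the surface area of $K$. Let $K_u$ and $y_u$ denote the projections of $K$ and $x_0$, so that $|y_u|\ge R/2$. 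In each such projection, Brunn's concavity of the $(n-2)$-dimensional slices of $\mathrm{conv}(K_u\cup\{y_u\})$ perpendicular to $y_u$ gives $\mathcal{H}^{n-1}(\mathrm{conv}(K_u\cup\{y_u\}))\ge c_n\frac{R}{t}\mathcal{H}^{n-1}(K_u)$.

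The gap is in the $\phi^-$ part. After you discard the $s$-variable via $\phi^-(|y|)\le\phi^-(|y''|)$, what you prove is $\int_{\p\Omega}\phi^-\,d\mu\le c_nR\int_{B^{n-2}_r}\phi^-(|y''|)\,dy''$. This is not \eqref{eq:bound_Ephi_by_R_inc}, where the integrand is $\phi^-\big(\sqrt{x_1^2+|x''|^2}\big)$ on $[-R,R]\times B^{n-2}_r$. There are two problems with your quantity.

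\begin{itemize}
\item It is finite only if $\phi^-(|\cdot|)$ is integrable near $0$ in $\R^{n-2}$, which \eqref{eq:integrability_phi} does not give. Take $\phi(t)=-t^{-\alpha}$ with $n-2\le\alpha<n-1$; then Lemma \ref{lm:bathtub_principle} with $k=n-2$ is not even applicable.
\item Since $Rr^{n-2}$ is a fixed multiple of $E_0(\p\Omega)$, your term is at least $c_n'E_0(\p\Omega)\,\phi^-(r)$. So whenever $\phi(0^+)<0<\sup\phi<\infty$, e.g.\ $\phi(t)=\min\{t,2\}-1$, it does not yield the limit.
\end{itemize}

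You notice the second problem, but your proposed repair lacks the necessary input. You only use $P_i\Omega\subset B^{n-1}_R$ and $\mathcal{H}^{n-1}(P_i\Omega)\le E_0(\p\Omega)/2$. Nothing in that prevents $P_i\Omega$ from being a ball of volume $E_0(\p\Omega)/2$ centered at $0$, and then $\int_{P_i\Omega}\phi^-(|y|)\,dy$ does not shrink as $R\to\infty$.

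The missing ingredient is a uniform bound of order $E_0(\p\Omega)/R$ on the $(n-2)$-dimensional measure of each slice $\{x_1=s\}$, with $x_1$ along the far point. In the paper this is $V(x_1)\le K=CE_0(\p\Omega)/R$, which comes from $\Omega\subset C_{R,b}$ and $b_2\cdots b_{n-1}\le(n-1)!\,E_0(\p\Omega)/R$. With it, Lemma \ref{lm:bathtub_principle} is applied in $x''$ for each fixed $x_1\ne0$, where $x''\mapsto\phi^-\big(\sqrt{x_1^2+|x''|^2}\big)$ is bounded, and this gives the cylinder integral. The limit then follows when $\sup\phi>0$: $\phi^-$ vanishes beyond some $t_0$, so only $(-t_0,t_0)\times B^{n-2}_{r_i}$ contributes. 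That set has measure tending to $0$, and $\phi^-(|\cdot|)\in L^1(B^{n-1}_{t_0})$ by \eqref{eq:integrability_phi}.

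Alternatively, you could combine your own bound $\mu(B_t)\le\frac{t}{\gamma R}E_0(\p\Omega)$ with the trivial bound $\mu(B_t)\le\beta_{n-1}t^{n-1}$, which comes from monotonicity of perimeter applied to $\Omega\cap\overline{B_t}\subset\overline{B_t}$. Together they control $\int\phi^-\,d\mu$ by a radial integral comparable to the cylinder integral.

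Finally, when $\sup\phi\le 0$ the exact value of the limit does not follow from the inequality alone. You need the shift $\phi\mapsto\phi+S+1$ used in the paper.
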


Let us explain roughly the idea behind Lemma \ref{lm:bound_on_max_radius_decreasing} on the example of $n=3$ and $E_{-\alpha}.$ In that case, one easily sees that for the family of degenerate convex rectangles $\Omega_R=[0,R]\times[0,1/R]\times\{0\}\subset\R^3$ of constant perimeter $2\mathcal{H}^{2}( \Omega_R)=E_0(\p\Omega_R)=2$ it holds that
$$
  \lim_{R\to\infty}E_{-\alpha}(\p \Omega_R)=0=\inf_{t>0}t^{-\alpha}.
$$
This corresponds to the last statement of the lemma. Whereas the inequalities \eqref{eq:bound_Ephi_by_R_inc} and \eqref{eq:bound_Ephi_by_R_dec} are designed to estimate the energies of a general convex body by such a degenerate  rectangle, or more generally, by degenerate cylinders in dimensions $n>3$. 

Before providing the proof, we state a corollary showing how Lemma \ref{lm:bound_on_max_radius_decreasing} reads for the typical energies $E_p$ and $E_{-\alpha}$.

\begin{Corollary}\label{corollary:estimates Ep Ealpha and R}
Let $n\geq 3$, $p>0$ and $0<\alpha<n-1$, and suppose that $\Omega\subset\R^n$ is a convex body with $\mathcal{H}^{n-1}(\p\Omega)>0$ such that $0\in\Omega$. Define $R:=\max \{ |x| : x \in \p \Omega\}$. Then the following statements hold true.

\smallskip
(i) There exists a constant $C_{n,p}$ depending only on $n$ and $p$ such that
$$
E_p(\delomega)\geq C_{n,p} E_0(\delomega)R^p.
$$

(ii) There exists a constant $C_{n,\alpha}$ depending only on $n$ and $\alpha$ such that for all $R$ sufficiently bigger than some function of $E_0(\delomega)$, it holds that
$$
E_{-\alpha}(\delomega)\leq C_{n,\alpha}    \begin{cases}    E_0(\delomega) R^{-\alpha} & \text{ if } 0<\alpha<1, \\    E_0(\delomega) R^{-1}\log(R) & \text{ if } \alpha=1, \\ \displaystyle    \left( \frac{E_0(\delomega)}{R} \right)^{\frac{n-1-\alpha}{n-2}} & \text{ if } 1<\alpha<n-1.    \end{cases}
$$
\end{Corollary}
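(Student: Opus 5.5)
Both parts follow by specializing Lemma \ref{lm:bound_on_max_radius_decreasing} to $\phi(t)=t^p$ and $\phi(t)=t^{-\alpha}$, and then evaluating the resulting one-dimensional and $(n-2)$-dimensional integrals explicitly.

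For (i), take $\phi(t)=t^p$ with $p>0$. It is increasing and nonnegative, so $\phi^-\equiv 0$ and the second term in \eqref{eq:bound_Ephi_by_R_inc} vanishes. The first term is
$$
\frac{1}{\gamma R}\int_0^{\gamma R} t^p \dd t=\frac{(\gamma R)^p}{p+1}.
$$
Hence $E_p(\delomega)\geq \frac{\gamma^p}{p+1}E_0(\delomega)R^p$, which is (i) with $C_{n,p}=\gamma^p/(p+1)$.

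For (ii), take $\phi(t)=t^{-\alpha}$, which is decreasing and positive, so $\phi^-\equiv0$. Inequality \eqref{eq:bound_Ephi_by_R_dec}, after multiplying by $E_0(\delomega)$, gives
$$
E_{-\alpha}(\delomega)\leq C_1\int_{-R}^R\int_{B^{n-2}_r}|x'|^{-\alpha}\dd x' .
$$
We must read this correctly. Since $B^{n-2}_r\subset\R^{n-2}$ and there is an extra variable running over $(-R,R)$, the integrand should be $\phi(|(s,x')|)$ with $(s,x')\in\R^{n-1}$; this is the only reading that makes the estimate nontrivial and matches the integrability condition \eqref{eq:integrability_phi}. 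With that reading the integral is
$$
I=\int_{-R}^R\int_{B_r^{n-2}}(s^2+|x'|^2)^{-\alpha/2}\dd x'\dd s .
$$
The constraint "$R$ sufficiently large relative to $E_0$" is exactly the condition that makes $r=(C_0E_0/R)^{1/(n-2)}$ small, i.e. $r\ll 1\ll R$ and in particular $r\le R$. Under this condition I estimate $I$ by splitting the $s$-range into $|s|\le r$ and $r<|s|<R$.

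1. On $|s|\le r$, the region lies in a ball of radius $\sqrt2\, r$ in $\R^{n-1}$. Since $\alpha<n-1$, this piece is bounded by $C r^{n-1-\alpha}$.
2. On $r<|s|<R$, use $(s^2+|x'|^2)^{-\alpha/2}\le |s|^{-\alpha}$. This piece is at most $C r^{n-2}\int_r^R s^{-\alpha}\dd s$, and the behaviour of this integral gives three cases.
   - If $\alpha<1$: the integral is at most $CR^{1-\alpha}$, so the piece is at most $C r^{n-2}R^{1-\alpha}=C\,E_0R^{-\alpha}$, using $r^{n-2}=C_0E_0/R$.
   - If $\alpha=1$: the integral is $\log(R/r)$. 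Since $r$ is bounded above and below by powers of $R$ (for fixed $E_0$), this is $\le C\log R$, giving $C\,E_0R^{-1}\log R$.
   - If $\alpha>1$: the integral is at most $Cr^{1-\alpha}$, so the piece is at most $Cr^{n-1-\alpha}=C(E_0/R)^{(n-1-\alpha)/(n-2)}$.

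It remains to compare the piece from step 1, $r^{n-1-\alpha}$, with these bounds. For $\alpha>1$ it is exactly the claimed term. For $\alpha\le1$ write $r^{n-1-\alpha}=r^{n-2}r^{1-\alpha}\le r^{n-2}R^{1-\alpha}$, using $r\le R$, so it is also $\lesssim E_0R^{-\alpha}$ (times $\log R$ when $\alpha=1$).

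The main obstacle is the logarithmic case $\alpha=1$. Bounding $\log(R/r)$ by $\log R$ needs $r$ to be bounded below appropriately, namely $r\ge R^{-1}$ up to constants. This holds once $R$ is large relative to $E_0$, with constants depending on $E_0$ only through the threshold. If the constant is required to be independent of $E_0$, I would instead absorb the dependence into the threshold "$R$ sufficiently bigger than some function of $E_0$", which the statement explicitly allows.
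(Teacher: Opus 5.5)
Your proposal is correct and follows the paper's route. Part (i) is the same direct specialization of \eqref{eq:bound_Ephi_by_R_inc}, and part (ii) specializes \eqref{eq:bound_Ephi_by_R_dec} with the same reading $x'=(x_1,x'')\in\R\times\R^{n-2}$. The only difference is how the resulting integral is bounded: you split the $x_1$-range at $|x_1|=r$ into a ball piece and a slab piece, whereas the paper passes to polar coordinates in $x''$ and substitutes $x_1=st$. Both lead to the same three regimes in $\alpha$ and need $R$ large relative to $E_0(\delomega)$ in the same way.
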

\begin{proof}
The proof of $(i)$ is an immediate application of \eqref{eq:bound_Ephi_by_R_inc}. Whereas the proof of $(ii)$ follows from \eqref{eq:bound_Ephi_by_R_dec} by means of the following calculation (we denote here $x'=(x_1,x'')\in\R\times\R^{n-2}$). 
$$
E_{-\alpha}(\delomega)\leq C\int_{-R}^R\int_{B_r^{n-2}}\big(x_1^2+|x''|^2\big)^{-\alpha/2}\dd x''\dd x_1 = C\int_0^r \beta_{n-3} s^{n-3} \int_{-R}^R \big(x_1^2+s^2\big)^{-\alpha/2} \dd x_1 \dd s.
$$
Substituting $x_1=st$ in the inner integral gives
$$
E_{-\alpha}(\delomega)\leq C \beta_{n-3}\int_0^r s^{n-2-\alpha} \left( \int_{-R/s}^{R/s} (1+t^2)^{-\alpha/2} \dd t \right)\dd s.
$$
On the one hand, by definition \eqref{eq:def_radius_r}  we have $r\to0$ as $R\to\infty$. And on the other hand $R/s \to \infty$ as $R \to \infty$ for all $s \in (0, r]$. We therefore split the analysis depending on $\alpha$.

\smallskip
If $1<\alpha<n-1$, we can estimate the integration in $t$ by the inclusion $(-R/s,R/s)\subset(-\infty,\infty)$, yielding 
$$E_{-\alpha}(\delomega)\leq C r^{n-1-\alpha} = C_{n,\alpha} (E_0(\delomega)/R)^{\frac{n-1-\alpha}{n-2}}.
$$

\smallskip
If $0<\alpha \le 1$, we split the inner integral into the intervals $(0,1)$ and $(1,R/s)$, bounding $(1+t^2)^{-\alpha/2} \leq t^{-\alpha}$ on the latter. This gives
\begin{equation} \label{eq:split_integral}
    \int_{0}^{R/s} (1+t^2)^{-\alpha/2} \dd t \leq 1 + \int_1^{R/s} t^{-\alpha} \dd t
\end{equation}
In view of \eqref{eq:def_radius_r} it holds  that $1\leq C(R/r)^{1-\alpha}\leq C(R/s)^{1-\alpha}$ for all $s\in(0,r)$ for $R$ sufficiently large compared to $E_0(\delomega)$.
Thus, 
for $0<\alpha<1$, the right-hand side of \eqref{eq:split_integral} is bounded by $C (R/s)^{1-\alpha}$ for $R$ sufficiently large $R$, for some constant $C$ depending only on $n$ and $\alpha.$  
Thus
$$
E_{-\alpha}(\delomega) \leq C R^{1-\alpha} \int_0^r s^{n-3} \dd s = C_{n,\alpha} R^{1-\alpha} r^{n-2} = C_{n,\alpha} E_0(\delomega) R^{-\alpha}.
$$

\smallskip
For $\alpha=1$, \eqref{eq:split_integral} is equal to $1 + \log(R/s)$. Thus we obtain that 
$$
E_{-1}(\delomega) \leq C (1+\log(R)) \int_0^r s^{n-3} \dd s -\int_0^r s^{n-3}\log(s) \dd s.
$$
Taking into account that a primitive of $s^{n-3}\log(s)$ is $s^{n-2}\log(s)/(n-2)-s^{n-2}/(n-2)^2$, one easily sees that in this case $E_{-1}(\delomega)$ can be bounded by $E_0(\p \Omega) R^{-1}\log(R),$ modulo some constant depending only on $n.$ 
\end{proof}

\smallskip

\begin{customproof}{Lemma \ref{lm:bound_on_max_radius_decreasing}}

We only need to deal with the case where $\phi$ is an increasing function. 
The case of a decreasing weight $\phi$ follows immediately by applying \eqref{eq:bound_Ephi_by_R_inc} to the increasing function $ -\phi$, instead of $\phi$. Multiplying both sides of the inequality by $-1$ flips the inequality and yields exactly \eqref{eq:bound_Ephi_by_R_dec}. The statement regarding a sequence of convex bodies with $R_i\to\infty$ also follows as  $\sup (-\phi) = -\inf \phi.$

    Let $\Omega$ be a convex body in $\R^n$ and let $y_1 \in \partial\Omega$ be a point at maximum distance from the origin, so that $|y_1|=R$. Let $V_1:= \Span \{y_1\}$. Define $y_2 \in \p \Omega$ be the point farthest from $V_1$. More precisely, consider
\[
b_2:= \max \{ \dist(V_1,z) \, : \, z \in \p \Omega \}>0,
\]
and $y_2$ is the point where the maximum is achieved. Denote with $V_2=\Span \{y_1,y_2\}$. Iteratively define
\[
b_j:=\max \{ \dist(V_{j-1},z) \, : \, z \in \p \Omega \}, \quad \text{for }j=2,\dots, n-1,\;\text{ and }\; b_1:=R
\]
and the corresponding points $y_j$ where such maximum is achieved.  Notice that $b_j>0$ since  $\mathcal{H}^{n-1}(\p \Omega) >0$.
Let us define for $j=1,\ldots,n-1$, the $j$-dimensional simplex $\Delta_j$ of vertices $0,y_1, \dots, y_j$
by
$$
  \Delta_j:=\left\{ \sum_{i=1}^{j}t_iy_i;\,t_i\geq 0\,, \sum_{i=1}^{j}t_i\leq 1\right\},\,\text{ and }\, \Delta:=\Delta_{n-1}.
$$
It is easy to see 
that $\Delta$ is contained in $\Omega$.

The $(n-1)$-dimensional area $\mathcal{H}^{n-1}(\Delta)$ of the simplex $\Delta$ is given by
\begin{equation}\label{eq:gy:area of simplex}
\mathcal{H}^{n-1}(\Delta)=\frac{1}{(n-1)!}b_1  \cdot \cdot \cdots \cdot b_{n-1},
\end{equation}
see for instance \cite[Formula 9.12.4.3]{berger2009geometry}.

Moreover, the vertices of $\Delta$ are $\{0,y_1,\dots,y_{n-1} \}$ thus, by calling $P$ the projection of $\Omega$ onto $V_{n-1}$, we infer that $\Delta \subset P(\Omega)$. Since $\Omega$ is convex it holds that $E_0(P(\Omega)) \leq E_0(\p \Omega)$. Combining these two facts with the monotonicity of the perimeter functional of convex sets with respect to inclusion, we must have that $E_0(\Delta)\leq E_0(P(\Omega))$, which in turn gives $E_0(\Delta) \leq E_0(\p \Omega)$, and thus
\begin{equation}
 \label{eq:b times R const}
  b_2 \cdot \cdots \cdot b_{n-1}\leq (n-1)!\, \frac{E_0(\partial\Omega)}{b_1}
  =(n-1)!\, \frac{E_0(\partial\Omega)}{R}.
\end{equation}

By definition of $R$ we have that, up to a rotation, $\Omega \subset [-R,R] \times \R^{n-1}$. Similarly by construction and a simple induction argument, it follows that $\Omega$ is contained, up to a rotation, in the hyperrectangle
\begin{equation}
  \label{eq:Omega in cylinder}
\Omega\subset C_{R,b}:=[-R,R] \times [-b_2,b_2] \times  [-b_3,b_3] \times \cdots \times  [-b_{n-2},b_{n-2}] \times [-b_{n-1},b_{n-1}]^2.
\end{equation}
For future reference we define 
\[
b:= \left( b_2 \cdots b_{n-1}  \right)^{\frac{1}{n-2}},
\]
and  \eqref{eq:b times R const} becomes
\begin{equation} \label{eq:b small with R}
    b^{n-2} \leq (n-1)! \frac{E_0(\partial\Omega)}{R}.
\end{equation}

The boundary of a bounded convex body $\Omega\subset\R^n$ can be written as the union of $2n$ graphs whose slopes never exceed $\sqrt{n}$ in any direction. For this fact see, for instance, \cite[Lemma 2.1 and 2.2]{Csato2024},  where we use \cite[Lemma 2.2]{Csato2024} with $\epsilon=1$. Moreover, each graph is a function of the $n-1$ variables of one of the canonical hyperplanes $\{x_j=0\}$ for some $j\in\{1,\ldots,n\}.$ This last fact follows immediately from the proof of \cite[Lemma 2.2]{Csato2024}.
More precisely
$$
  \partial\Omega=\bigcup_{i=1}^{2n} M_i\,
$$
and, up to permutations of the canonical axes, $M_i=\{(x',g_i(x'));\, x'\in W_i\subset \R^{n-1}\}$ where we denote with $x=(x',x_n) \in \R^{n-1} \times \R$. Here $g_i$ is some convex or concave function given on $W_i$, which is the projection of $\Omega$ onto $\{x_n=0\}$, such that
\begin{equation}
 \label{eq:nabla g bounded}
 |\nabla g_i|\leq \sqrt{n}.
\end{equation}
We can refine the cover $\{M_i\}_{i=1,\dots,2n}$ into a disjoint partition of $\partial\Omega$ by replacing for $i>1$ each $M_i$ with $M_i \setminus \bigcup_{j=1}^{i-1} M_j$. For the sake of simplicity, we keep the notation $M_i$ for the sets of the disjoint partition, and let $W_i$ denote their respective projections.

As $\Omega$ is contained in $C_{R,b}$ thanks to \eqref{eq:Omega in cylinder}, and $R\geq b_2\geq\ldots\geq b_{n-1}$, the projections $W_i$ of $\Omega$'s boundary  portions satisfy 
\begin{equation}
 \label{eq:new FRb}
  W_i\subset F_{R,b}:=[-R,R]\times[-b_2,b_2] \times \dots \times[-b_{n-2},b_{n-2}]\times [-b_{n-1},b_{n-1}].
\end{equation}
This follows from the fact that these projections are obtained by omitting one of the brackets in the definition of $C_{R,b}$.

From the previous construction  of the cover of $\p \Omega$ given by the disjoint sets $\{M_i\}_{i=1,\dots,2n}$,  and since $\phi$ is increasing, we obtain 
\begin{equation} \label{eq:bound_Ephi_going_down}
\begin{aligned}
 E_{\phi}(\p \Omega)&=\sum_{i=1}^{2n}\int_{W_i} \phi\left(\sqrt{|x'|^2+g_i(x')^2}\right)\sqrt{1+|\nabla g_i(x')|^2} \dd x'\\
 &\geq \sum_{i=1}^{2n}\int_{W_i} \phi\left(|x'|\right)\sqrt{1+|\nabla g_i(x')|^2} \dd x' = \int_{-R}^{ R} \int_{\R^{n-2}}\phi\left(|x'|\right) v(x_1, x'') \dd x'' \dd x_1,
\end{aligned}
\end{equation}
where we split $x' = (x_1, x'') \in  \R \times \R^{n-2}$, and define
 \[
v(x_1, x'') := \sum_{i=1}^{2n} \ \sqrt{1+|\nabla g_i(x_1, x'')|^2} \one_{{W}_i}(x_1, x'').
 \]

We decompose $\phi = \phi^+ - \phi^-$ and argue differently for $\phi^+$ and $\phi^-$. Consider first the positive part $\phi^+$. From \eqref{eq:bound_Ephi_going_down} we use that $\phi^+(|x'|)\geq \phi^+(|x_1|) $  and reach
\begin{equation}\label{eq:phiplus and V}
\int_{-R}^{R} \int_{\R^{n-2}} \phi^+(|x'|) v(x_1,x'') \dd x'' \dd x_1 \geq \int_{-R}^{R}  \phi^+(|x_1|) V(x_1) \dd x_1,
\end{equation}
where 
\[
V(x_1)=\int_{\R^{n-2}} v(x_1, x'') \dd x''.
\]
Our goal is now to apply Lemma \ref{lm:bathtub_principle}. Firstly, notice that 
$$
  \int_{\R} V(x_1) \dd x_1=\int_{-R}^{R} V(x_1) \dd x_1 =E_0(\p \Omega).
$$
Moreover, from \eqref{eq:nabla g bounded}, \eqref{eq:new FRb}, and eventually  using \eqref{eq:b small with R} we bound $V$ by
\begin{equation} \label{eq:V_0}
 \begin{split}
    0 \leq V(x_1) &\leq \int_{\R^{n-2}}\sum_{i=1}^{2n}\sqrt{1+n}\,\,\one_{W_i}(x_1,x'')dx''
    \\
    &\leq 2n\sqrt{1+n}\int_{-b_2}^{b_2}\cdots\int_{-b_{n-1}}^{b_{n-1}} dx''
    \leq C\frac{E_0(\p \Omega)}{R}=:K, 
 \end{split}
\end{equation}
where $C$ is a positive constant depending only on the dimension $n$. 
We are now ready to apply Lemma \ref{lm:bathtub_principle} to $V$ and $\phi^+$ in dimension $k=1$. We assign as a bound for the total mass $D := E_0(\p \Omega)$ and the bound $K$ defined above.  The optimal radius $r$ in Lemma \ref{lm:bathtub_principle} is thus determined by the condition $2r = \frac{D}{K}$, which yields
 \[
 r = \frac{E_0(\p \Omega)}{2K} = \frac{R}{2C} =: \gamma R,
 \]
 for a suitable dimensional constant $\gamma > 0$. Applying Lemma \ref{lm:bathtub_principle} to $V$ and $\phi^+$, we obtain
 \begin{align*}
\int_{-R}^{R}\phi^+\left(|x_1|\right) V(x_1) \dd x_1=
\int_{\R}\phi^+\left(|x_1|\right) V(x_1) \dd x_1 
     &\geq K \int_{-r}^{r} \phi(|x_1|) \dd x_1 = 2K \int_0^{\gamma R} \phi^+(t) \dd t.
 \end{align*}
 Recalling that $2K = \frac{E_0(\p \Omega)}{\gamma R}$, we substitute this into the inequality \eqref{eq:phiplus and V} to reach
 \begin{equation}\label{eq:bound_E_phi+}
        \int_{-R}^{R} \int_{\R^{n-2}} \phi^+(|x'|) v(x_1,x'') \dd x'' \dd x_1  \ge \frac{E_0(\p \Omega)}{\gamma R}\int_0^{\gamma R} \phi^+(t) \dd t.
 \end{equation}
 This proves the estimate on $\phi^+$.

We now focus on bounding the contribution of $\phi^-$ in \eqref{eq:bound_Ephi_going_down}. First observe that for every $x_1$ the map
 $t\in[0,\infty)\mapsto \psi(t):=\phi^-(\sqrt{x_1^2+t^2})$ is a decreasing function
and that $0\leq v \leq K_2$ for the dimensional constant $K_2=2n\sqrt{1+n}$. 
Moreover, for every $x_1\neq 0$ the map $x''\in\R^{n-2}\mapsto \psi(|x''|)$ is locally integrable in $\R^{n-2}$, since $\phi^-$ could only blow up, possibly, at the origin.
Thus, for every $x_1 \in [-R,R]\setminus \{0\}$
we apply Lemma  \ref{lm:bathtub_principle} with $k=n-2$ to $\psi$ and $v$ viewed as functions of $x''$ with the total mass $D=V(x_1)$ and obtain
\begin{equation*}
\int_{\R^{n-2}} \phi^-(|x'|) v(x_1, x'') \dd x'' \leq K_2 \int_{B_{r(x_1)}^{n-2}} \phi^-(|x'|) \dd x''
\end{equation*}
where $r(x_1)$ is given by the relation
\[
r(x_1)=\left( \frac{V(x_1)}{K_2\omega_{n-2}}\right)^{\frac{1}{n-2}} \leq \left( \frac{K}{K_2 \omega_{n-2}}\right)^{\frac{1}{n-2}}.
\]
Recalling the definition of $K$ in \eqref{eq:V_0} and using that $\phi^-\geq 0$ we extend the domain of integration to $B^{n-2}_{r}$ where $r$ is defined as \eqref{eq:def_radius_r}. Integrating in $x_1$ ranging in $[-R,R]$ we obtain
\begin{equation}\label{eq:bound_for_phi-}
\begin{aligned}
    \int_{-R}^{R} \int_{\R^{n-2}} \phi^-(|x'|) v(x') \dd x' &\leq K_2 \int_{-R}^{R} \int_{B^{n-2}_{r}} \phi^-(|x'|) \dd x' 
\end{aligned}
\end{equation}
This concludes the estimate on $\phi^-$.

Combining \eqref{eq:bound_E_phi+}  and \eqref{eq:bound_for_phi-} in \eqref{eq:bound_Ephi_going_down}, we establish the bound  
\begin{equation} \label{eq:bound_Ephi_by_R_inc1}
\frac{E_{\phi}(\p \Omega)}{E_0(\p \Omega)} \geq \frac{1}{\gamma R} \int_0^{\gamma R} \phi^+(t) \dd t - \frac{K_2}{E_0(\p \Omega)} \int_{-R}^R \int_{B^{n-2}_{r}} \phi^-(|x'|) \dd x'\,
\end{equation}
which proves the claimed inequality of the lemma. 

\smallskip

Finally, we analyze the asymptotic behavior for a sequence of convex bodies $(\Omega_i)_{i\geq 1}$ with $E_0(\p \Omega_i)$ fixed and $R_i \to \infty$. We shall distinguish two cases. 

\smallskip

\textit{Case 1: $\sup_{t \ge 0} \phi(t) > 0$.}

If $\sup_{t \ge 0} \phi(t) > 0$, then there exists $t_0 \ge 0$ such that $\phi(t) \ge 0$ for $t \ge t_0$, meaning $\phi^-$ is compactly supported in $[0, t_0]$.
Hence, the function $x'\mapsto \phi^-(|x'|)$ is compactly supported in $B_{t_0}^{n-1}$. Moreover, by our standing integrability assumption for a weight function we have that $t^{n-2}\phi(t) \in L^1([0,1])$. It thus follows that 
$$
   \phi^-(|x'|) \in L^1\big(B_{t_0}^{n-1}\big).
$$

As $R_i \to \infty$, the radius of the bounding cylinder $r_i=(C E_0/R_i)^{\frac{1}{n-2}}$ tends to $0$. Consequently, the set $(-R,R) \times B^{n-2}_{r_i}\cap B_{t_0}^{n-1}\subset (-t_0,t_0)\times B_{r_i}^{n-2}$ shrinks to a set of Lebesgue measure zero in $\R^{n-1}$. 
Hence, the integral representing the negative contribution in \eqref{eq:bound_Ephi_by_R_inc}, i.e., the term containing $\phi^-,$ vanishes as $R_i \to \infty$.

On the other hand, the first term in \eqref{eq:bound_Ephi_by_R_inc} is exactly the integral average of the increasing function $\phi^+$ over the expanding interval $[0, \gamma R_i]$. By the properties of integral averages, this term converges to $\sup_{t \ge 0} \phi^+(t) = \sup_{t \ge 0} \phi(t)$. 

\smallskip

\textit{Case 2: $\sup_{t \ge 0} \phi(t) \leq 0$.}

Let us now deal with the case when the weight function $\phi$ is such that $\sup_{t\geq 0}\phi(t)=-S\leq 0$ for some $S\geq0.$ In this case define $ \phi_S(t):=\phi(t)+S+1$, a new function which satisfies that $\sup\phi_S=1>0$ and also the requirements of the definition of a weight function. Hence we will be able to use the result of Case 1. As we have the relation $E_\phi(\delomega)=E_{\phi_S}(\delomega)-(S+1)E_0(\delomega)$, we can take the limit $i\to\infty$ in
$$
\lim_{i\to\infty}\frac{E_\phi(\delomega_i)}{E_0(\delomega_i)}=\lim_{i\to\infty}\left(\frac{E_{\phi_S}(\delomega_i)}{E_0(\delomega_i)}-(S+1)\right)=1-(S+1)=-S=\sup_{t\geq 0}\phi. 
$$
which proves the claim under the present case. 
\end{customproof}

We are now equipped to prove the main existence result of Theorem \ref{Thm:intro:existence1}. We shall use the notation  $\mathcal{K}^n$ for set of convex bodies:
$$
  \mathcal{K}^n=\{\Omega\subset\R^n:\, \Omega\text{ is a bounded closed convex set}\}.
$$

\begin{customproof}{Theorem \ref{Thm:intro:existence1}}
Let $\{\Omega_i\}_{i\geq1}\subset\mathcal{K}^n$ be either a minimizing or maximizing sequence, depending on which of the two cases $(i)$ or $(ii)$ we are dealing with. 
Before detailing the proof, we observe that we can assume $0 \in {\Omega_i}$ for all $i \ge 1$ without loss of generality. Indeed, if $0 \notin {\Omega_i}$, translating the set towards the origin decreases the Euclidean norm of all its points, thanks to the convexity of $\Omega_i$. This operation improves the energy $E_\phi$ in both optimization problems (minimizing an increasing weight or maximizing a decreasing weight) while leaving the perimeter $E_0$ unchanged.

\smallskip
\textit{Proof of $(i)$.} Let $\phi$ be an increasing function, and let $\{\Omega_i\}_{i\geq 1}\subset\mathcal{K}^n$ be a minimizing sequence for $E_\phi(\p \Omega)$ among convex bodies with $E_0( \p \Omega)=\lambda$, for some $\lambda>0$. 
That is,
$$
\inf\{ E_\phi(\p \Omega) \, : \, \Omega \in \mathcal{K}^n, E_0( \p \Omega)=\lambda\}=\lim_{i\to\infty}E_\phi(\partial\Omega_i), 
$$
and $E_0( \p \Omega_i)=\lambda$ for all $i\geq 1$.

\textit{Claim 1.}  We claim that the sequence of sets $\{\Omega_i\}_{i\geq 1}$ is uniformly bounded; that is, there exists $R>0$ such that $\Omega_i\subset B_R$ for all $i\geq 1$. For this purpose we can assume
without loss of generality that $\phi$ is not a constant function, otherwise there is nothing to be proven.
We argue by contradiction and assume the sequence is unbounded. 
Since we established that $0 \in {\Omega_i}$ for all $i\geq1$, the sequence cannot escape to infinity while maintaining a uniformly bounded diameter. Thus, its unboundedness necessarily implies (up to extracting a subsequence) that the diameters must diverge
\begin{equation}\label{eq:gy:diam infty}
\lim_{i\to\infty}\text{diam}(\Omega_i)=\infty.
\end{equation}

If $n=2$, this immediately yields a contradiction, as the diameter of a convex set is universally bounded by its perimeter, which is equal to  $1$ for all $i$. On the other hand, for $n\geq 3$, we will deduce that \eqref{eq:gy:diam infty} cannot hold in virtue of Lemma \ref{lm:bound_on_max_radius_decreasing}, which is applicable since $0 \in {\Omega_i}$. By defining $R_i := \max_{x \in \partial\Omega_i} |x|$, the unbounded diameter condition implies $R_i \to \infty$. Thus Lemma \ref{lm:bound_on_max_radius_decreasing} yields that $E_\phi(\p \Omega_i) \to \sup_{t\geq 0} \phi(t)$. This contradicts the assumption that the sequence is minimizing, as any  competitor whose boundary contains the origin provides a smaller energy if $\phi$ is nonconstant. This creates a contradiction and settles Claim 1.

From the Blaschke selection theorem, see for instance \cite[Theorem 1.8.7]{Schneider}, we infer that, up to a subsequence, $\Omega_i$ converge in the Hausdorff metric to a limiting compact convex set $\Omega_\infty$. The existence of the minimizer is then readily established by combining the continuity of the perimeter and of the energy $E_\phi$, which are given by Theorem \ref{lm:continuity_energy}.

\smallskip
\textit{Proof of $(ii)$.} The existence of a maximizer when $\phi$ is decreasing follows  from Part $(i)$ applied to $-\phi$.

\smallskip
Finally, in the particular case of the specific weights $E_{p}$ and $E_{-\alpha}$, the existence of extremals gives the inequalities \eqref{eq:gy:universal constant p pos} and \eqref{eq:gy:universal constant p neg} due to homogeneity or scaling.

The fact that equality is achieved by some bounded open convex set under the  conditions $p> 4-n$, and $n \ge 5$ with $\alpha < n-4$ is Theorem \ref{thm:nonempty_interior}, which we prove in the next section. 
\end{customproof}

\subsection{Non-degeneracy of extremals via perturbations of cylinders}
\label{Sec:non_degeneracy}

In this section we investigate the conditions under which we can rule out the degeneracy of the extremals, i.e. guarantee that they possess a nonempty interior. Standard rearrangement arguments show that if an optimal convex set, meaning minimizer for $E_p$, respectively maximizer for $E_{-\alpha}$, is degenerate, it must necessarily  be an $(n-1)$-dimensional disk. Therefore, our strategy is to perturb a degenerate disk by thickening it into a cylinder while strictly preserving the perimeter constraint, and to analyze the corresponding variations of the energy.

\begin{Theorem} \label{thm:nonempty_interior}
Let $n\geq 2$ be an integer. Then the following properties hold:
   \begin{itemize}
       \item[(i)] If $p> \max\{4-n,0\}$, then the minimizer of $E_p$ of the problem \eqref{min_problem_p_pos} has a nonempty interior.
       \item[(ii)] If $n \ge 5$ and $0 < \alpha < n-4$, then the maximizer of $E_{-\alpha}$ of the problem \eqref{max_problem_p_neg} has a nonempty interior.
   \end{itemize} 
\end{Theorem}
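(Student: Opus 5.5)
The plan is to argue by contradiction. Assume the extremal $\Omega^*$ (which exists by Theorem \ref{Thm:intro:existence1}) is degenerate. Then $\Omega^*$ lies in a hyperplane. By the standard rearrangement argument in $\R^{n-1}$ mentioned at the start of this section, together with the translation argument used in the proof of Theorem \ref{Thm:intro:existence1}, we may assume it is the flat centered disk
\[
D_R=\{(x',0):\,|x'|\le R\},\qquad 2\omega_{n-1}R^{n-1}=\lambda .
\]
We then exhibit a one-parameter family of nondegenerate competitors with the same perimeter which strictly improves the energy. The natural family is the cylinders
\[
\Omega_h=B^{n-1}_{\rho(h)}\times[-h,h],
\]
with $\rho(h)$ fixed by the constraint
\[
2\omega_{n-1}\rho^{n-1}+2h\beta_{n-2}\rho^{n-2}=\lambda,\qquad \rho(0)=R .
\]
By Theorem \ref{lm:continuity_energy} (or by the factor $\eta=2$ directly), $E_p(\p\Omega_h)\to E_p(D_R)$ as $h\to0$, so everything reduces to an expansion of
\[
e(h):=2\int_{B^{n-1}_{\rho}}\big(|x'|^2+h^2\big)^{p/2}\dd x' \;+\; \beta_{n-2}\rho^{n-2}\int_{-h}^{h}\big(\rho^2+t^2\big)^{p/2}\dd t
\]
in powers of $h$. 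We treat $p>0$ for part (i) and $p=-\alpha$ for part (ii) simultaneously.

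First I expand the constraint. Since $\beta_{n-2}=(n-1)\omega_{n-1}$, one gets $\rho'(0)=-1$; I will also compute $\rho''(0)$, which is a dimensional expression in $1/R$. Next I expand $e$.
\begin{itemize}
\item To first order, the top and bottom disks lose $2\beta_{n-2}R^{n-2+p}h$ through the shrinking radius.
\item The lateral surface gains exactly $2\beta_{n-2}R^{n-2+p}h$.
\end{itemize}
So the first variation vanishes, and the sign is decided at second order. The order-$h^2$ terms come from three sources:
\begin{itemize}
\item $\rho''(0)$ and the square of $\rho'(0)$ in the disk term, i.e.\ the derivative of $\rho\mapsto 2\beta_{n-2}\rho^{n-2+p}$;
\item the derivative in $\rho$ of the lateral term $2h\beta_{n-2}\rho^{n-2+p}$;
\item the height correction $2\int_{B_R}\big((|x'|^2+h^2)^{p/2}-|x'|^p\big)\dd x'$. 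This is $\tfrac{p}{2}h^2\cdot 2\int_{B_R}|x'|^{p-2}\dd x'+o(h^2)$ provided $p+n-3>0$, which holds in both parts since $p>4-n$. The remainder is controlled by splitting $B_R$ into $|x'|<h$ and $|x'|>h$.
\end{itemize}
Collecting terms, I expect
\[
e(h)=E_p(D_R)+c_{n}\,p\,(4-n-p)\,R^{n-4+p}\,h^2+o(h^2),
\]
with $c_n>0$, or some expression with the same sign structure. The factor $p$ comes from both the height term and from $\frac{d}{d\rho}\rho^{n-2+p}$. The form $(4-n-p)$ is the natural guess given the thresholds in the statement.

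It then remains to read off the signs.
\begin{itemize}
\item For $p>\max\{4-n,0\}$, the coefficient is negative, so $E_p(\p\Omega_h)<E_p(D_R)$ for small $h$. This contradicts minimality and proves (i).
\item For $p=-\alpha$ with $0<\alpha<n-4$, the product $p(4-n-p)=\alpha(n-4-\alpha)$ is positive, so $E_{-\alpha}(\p\Omega_h)>E_{-\alpha}(D_R)$. This contradicts maximality and proves (ii). Here $n\ge5$ is needed just to have a nonempty range of $\alpha$.
\end{itemize}

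The main obstacle is the second-order bookkeeping: getting the exact coefficient, in particular the interplay between $\rho''(0)$ and the lateral term. If the coefficient does not come out in the clean form above, I would instead parametrize by $\rho$ rather than $h$, solving $h=\frac{\lambda-2\omega_{n-1}\rho^{n-1}}{2\beta_{n-2}\rho^{n-2}}$ exactly, and expand in $\delta=R-\rho$. That avoids computing $\rho''$.

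A secondary point is to justify the rearrangement reduction to $D_R$. In the hyperplane, symmetric decreasing rearrangement of the indicator function, combined with the monotonicity of $t^p$, shows the centered disk is optimal among flat sets. Lemma \ref{lm:bathtub_principle} can be used for this.
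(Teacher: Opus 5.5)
Your strategy is the paper's. You thicken the flat disk into cylinders of the same surface area, observe that the first variation vanishes, and decide by the sign of the second variation. The paper parametrizes by the radius $r\nearrow 1$ with $h(r)=\frac{r^{2-n}-r}{n-1}$, so $h\approx 1-r$, which makes no difference. Your reduction to the centered disk is also fine, and it does not even need uniqueness: if a degenerate extremal existed, the bathtub lemma would make the centered flat disk of the same area extremal as well.

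The gap is that the step carrying the whole content of the theorem, the $h^2$ coefficient, is not derived. You infer it from the thresholds in the statement, which is circular as written. It does come out as you predict if you carry out your own bookkeeping.

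Take $R=1$. The constraint $\rho^{n-1}+(n-1)h\rho^{n-2}=1$ gives $\rho=1-h+\frac{n-2}{2}h^2+O(h^3)$. After dividing $e(h)$ by $2\beta_{n-2}$, the three contributions at order $h^2$ are:
\begin{itemize}
\item the unperturbed flat part $\rho^{n-1+p}/(n-1+p)$ contributes $\frac{2n-4+p}{2}h^2$;
\item the lateral part $h\rho^{n-2+p}+O(h^3)$ contributes $-(n-2+p)h^2$;
\item the height correction contributes $\frac{p}{2(p+n-3)}h^2$, with remainder $o(h^2)$ by your splitting at $|x'|=h$, since $p>3-n$.
\end{itemize}
Hence
\[
e(h)=e(0)+\beta_{n-2}\,\frac{p\,(4-n-p)}{p+n-3}\,h^2+o(h^2),
\]
which is $e(0)+\beta_{n-2}E_p''h^2$ with the paper's value $E_p''=-p\,\frac{p+n-4}{p+n-3}$.

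So your constant is $\beta_{n-2}/(p+n-3)$. It depends on $p$, and it is positive precisely because $p+n-3>1$ in both parts. With this computation filled in, your sign reading for (i) and (ii) is correct.
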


\begin{Remark}
    Condition (i) implies that for $E_p$ every minimizer is nondegenerate whenever $n \ge 4$. In lower dimensions, nondegeneracy is guaranteed for $p>1$ when $n=3$, and for $p> 2$ when $n=2$. But we can include the case $p=2$ and $n=2$ too, since it is known that the minimizer is the equilateral triangle centered at the origin due to  Sachs \cite{SachsIandII} and Hall \cite{Hall1985}. Concerning $E_{-\alpha}$, observe that in low dimensions $n \le 4$, the perturbation of a flat cylinder cannot exclude the degeneracy of the maximizer for any given $0<\alpha<n$. This limitation strongly aligns with our conjecture that highly singular weights force the optimal convex body to be a lower-dimensional degenerate set.
\end{Remark}

To prove Theorem \ref{thm:nonempty_interior}, we construct a family of continuous $1$-parameter variations consisting of cylinders $\Omega_r$ of radius $r \in (0,1]$, designed to keep the surface area constant. The limit case $r=1$ corresponds exactly to the degenerate double disk $\Omega_1=\Omega$. The boundary of these cylinders is decomposed as $\partial\Omega_r = M_r \cup D_r$, where $M_r$ represents the lateral surface contained in $\{(x',x_n)\in\R^n:\,x'\in\R^{n-1}\text{ and }|x'|=r\}$ and $D_r$ represents the two identical flat disks (bases), contained in two planes parallel to the plane $x_n=0$.

To streamline the presentation, throughout the proof we unify the notation by letting $p \in (1-n, \infty)$, thus treating the regular ($p>0$) and singular ($p = -\alpha < 0$) cases simultaneously.

For convenience we set the surface area of $\Omega_r$ to be $2 \omega_{n-1}=2\mathcal{H}^{n-1}(B_1^{n-1})$, where $B_1^{n-1}$ is the unit ball in $\R^{n-1}$ and define $\Omega_r$ as
\[
\Omega_r := B_{r}^{n-1} \times [-h(r),h(r)].
\]

The constraint $2 \omega_{n-1}=\mathcal{H}^{n-1}(\p \Omega_r)= \mathcal{H}^{n-1}(M_r)+ \mathcal{H}^{n-1}(D_r)=2 h(r) \beta_{n-2}r^{n-2}+2\omega_{n-1}r^{n-1}$ yields the following explicit expression of the function $h(r)$
\[
h(r)=\frac{\omega_{n-1}}{\beta_{n-2}} \left(r^{2-n}-r \right)=\frac{r^{2-n}-r}{n-1}.
\]
Note that
$$
  h(1)=0,\quad h'(1)=-1\quad \text{ and }\quad h''(1)=n-2. 
$$

The total energy functional splits into the contributions from the lateral surface and the bases and it is equal to 
\begin{equation}
    E_p(\p \Omega_r) = \int_{M_r} |x|^{p} \dH(x) +  \int_{D_r} |x|^{p} \dH(x) =: 2\beta_{n-2}E_p(M_r) + 2\beta_{n-2} E_p(D_r),
\end{equation}
where the energy of the  lateral surface $M_r$ (modulo the factor $2\beta_{n-2}$)  is given by the integral
\begin{equation}
    E_p(M_r): = r^{n-2} \int_0^{h(r)} (r^2 + x_n^2)^{\frac{p}{2}} \dd x_n .
\end{equation}
Similarly, for the bases $D_r$, the energy $E_p(D_r)$ is defined by
\begin{equation}
    E_p(D_r) :=  \int_0^r t^{n-2} (t^2 + h(r)^2)^{\frac{p}{2}} \dd t .
\end{equation}
Note that $E_p(\p\Omega_r)$ is a continuous function of $r$ on $(0,1]$ and smooth on $(0,1).$ As a convenient abbreviation in this section we will define
$$
  E_p':= \frac{1}{2\beta_{n-2}}\lim_{r\nearrow 1} \frac{d}{dr} E_p(\partial\Omega_r)  .
$$
Recall that in this section we deviate from our convention that $p$ is always positive, as it does not matter for the mere calculation of derivatives whether we eventually maximize or minimize the energy ($E_{-\alpha}$ or $E_p$). 

\begin{Lemma}\label{lemma:first variation of Ep}
Let $n\geq 2$ be an integer and $ p\in(1-n,\infty)\setminus\{0\}$. Then 
\begin{equation} \label{eq:thesis_first}
    E_p' =\begin{cases}
          \infty   & \text{ if } p<2-n, \\
      >0 & \text{ if } p = 2-n, \\
    0 & \text{ if } p>2-n. \\
      \end{cases}
\end{equation}
\end{Lemma}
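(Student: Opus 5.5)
We will compute $\frac{d}{dr}E_p(M_r)$ and $\frac{d}{dr}E_p(D_r)$ separately for $r\in(0,1)$ and then let $r\nearrow1$. For $r<1$ we have $h(r)>0$. Both integrands are smooth and bounded on the relevant compact domains: for $M_r$ because $r^2+x_n^2\geq r^2>0$, and for $D_r$ because $t^2+h^2\geq h^2>0$. Hence differentiation under the integral sign (Leibniz rule) is justified. Throughout we use $h(1)=0$ and $h'(1)=-1$.

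\emph{Lateral part.} The Leibniz rule gives
\[
\frac{d}{dr}E_p(M_r)=(n-2)r^{n-3}\int_0^{h}(r^2+s^2)^{\frac p2}\dd s+r^{n-2}h'(r)\,(r^2+h^2)^{\frac p2}+p\,r^{n-1}\int_0^{h}(r^2+s^2)^{\frac p2-1}\dd s .
\]
As $r\nearrow1$ the integrands stay bounded near $r=1$, and the domain of integration shrinks to a point since $h\to0$. So both integral terms vanish, and the limit is $h'(1)=-1$ for every $p$.

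\emph{Bases.} Here
\[
\frac{d}{dr}E_p(D_r)=r^{n-2}(r^2+h^2)^{\frac p2}+p\,h\,h'\int_0^r t^{n-2}(t^2+h^2)^{\frac p2-1}\dd t .
\]
The first term tends to $1$, which cancels the $-1$ from the lateral part. Hence $E_p'$ equals the limit of $T(r):=p\,h\,h'\,I(h)$, where $I(h)=\int_0^r t^{n-2}(t^2+h^2)^{p/2-1}\dd t$. The main step is the asymptotics of $I(h)$ as $h\to0$, which we split into three cases.
\begin{itemize}
\item If $p>3-n$, then $t^{n-2}\,t^{p-2}$ is integrable near $0$, so $I(h)$ stays bounded and $T\to0$.
\item If $p=3-n$, then $I(h)\sim\log(1/h)$ and $T\sim h\log(1/h)\to0$.
\item If $p<3-n$, substitute $t=hs$ to get
\[
I(h)=h^{n-3+p}\int_0^{r/h}s^{n-2}(1+s^2)^{\frac p2-1}\dd s .
\]
The last integral converges to a finite constant $c>0$, since the integrand decays like $s^{n+p-4}$ and $n+p-4<-1$. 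Thus $T\sim p\,h'(1)\,c\,h^{n-2+p}=-p\,c\,h^{n-2+p}$.
\end{itemize}

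In the third case the sign is decisive. There $p<3-n\leq1$, and since $p\neq0$ with $p\in(1-n,3-n)$, we have $p<0$ whenever $n\geq3$. (For $n=2$ this range is $p\in(-1,1)\setminus\{0\}$ and the power $h^{p}$ decides.) We conclude as follows:
\begin{itemize}
\item If $p>2-n$, the exponent $n-2+p$ is positive, so $T\to0$.
\item If $p=2-n$ (which forces $n\geq3$), we get $T\to-pc=(n-2)c>0$.
\item If $p<2-n$, we have $p<0$, the factor $-pc$ is positive and $h^{n-2+p}\to\infty$, so $T\to+\infty$.
\end{itemize}
Together with the earlier cases $p\geq3-n$, this gives exactly the trichotomy \eqref{eq:thesis_first}.

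The main obstacle is this asymptotic analysis of $I(h)$ and keeping careful track of the signs. That $E_p'$ is defined as a one-sided limit of derivatives means we never need differentiability at $r=1$ itself.
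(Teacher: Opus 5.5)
Your proposal is correct and follows essentially the same route as the paper. The lateral part contributes $h'(1)=-1$, which cancels the $1$ from the first base term, and everything then hinges on the rescaling $t=hs$ of $p\,h\,h'\int_0^r t^{n-2}(t^2+h^2)^{p/2-1}\dd t$, with the cases $p>3-n$, $p=3-n$ and $p<3-n$ (limit $-p\,c\,h^{n-2+p}$) handled exactly as in the paper. Your remaining differences are minor: you state the asymptotic $I(h)\sim\log(1/h)$ where the paper only proves a logarithmic upper bound, and in the case $p>3-n$ your bound by $t^{n+p-4}$ is only needed (and valid) for $p<2$, since for $p\geq 2$ the integrand is simply bounded.
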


Note that if $p<0$ and $p\leq 2-n,$ then Lemma \ref{lemma:first variation of Ep} gives support to Conjecture \ref{conjecture:intro:alpha}.

\smallskip

\begin{proof}
To find the first variation, we differentiate the energies $E_p(M_r)$ and $E_p(D_r)$ with respect to $r$. The derivative of the lateral surface contribution is
\begin{align*}
    \frac{d}{dr}E_p(M_r) &= (n-2)r^{n-3}\int_0^{h(r)}(r^2+x_n^2)^{\frac{p}{2}}\dd x_n  \\
    &\quad + r^{n-2}(h(r)^2+r^2)^{\frac{p}{2}}h'(r)  \\
    &\quad + p r^{n-2}\int_0^{h(r)}(r^2+x_n^2)^{\frac{p}{2}-1}r \,  \dd x_n .
\end{align*}
Evaluating this at $r=1$ (where the cylinder height vanishes, $h(1)=0$, and $h'(1)=-1$), the first and third terms vanish, yielding
\begin{equation}\label{Mr prime}
    \lim_{r\nearrow 1}  \frac{d}{dr}E_p(M_r)  = h'(1) = -1.
\end{equation}

Next, we compute the derivative of the bases' contribution
\begin{equation}\label{EpDr prime}
    \frac{d}{dr}E_p(D_r) =  r^{n-2}(r^2+h(r)^2)^{\frac{p}{2}} + p h(r)h'(r) \int_0^r t^{n-2}(t^2+h(r)^2)^{\frac{p}{2}-1}   \dd t .
\end{equation}
Observe that the asymptotic behavior of the second term as $r \nearrow 1$ is not immediately obvious for $p\leq 3-n$.

However, for $p>3-n$ the second term vanishes as $r \nearrow 1$ and thus
$$
  \lim_{r\nearrow 1} \frac{d}{dr}E_p(D_r) = 1\quad\text{ if } p>3-n.
$$

To study the case $p\leq 3-n$ we make the change of variables $t=h(r) s$ in the integral on the right side of \eqref{EpDr prime} and obtain
\begin{align*}
    I(r):= h(r)h'(r) \int_0^r t^{n-2}(t^2+h(r)^2)^{\frac{p}{2}-1}   \dd t = h(r)^{n+p-2}h'(r) \int_0^{\frac{r}{h(r)}} s^{n-2}(s^2+1)^{\frac{p}{2}-1}   \dd s. 
\end{align*}
Here note that as $r \nearrow 1$ the upper limit of integration $r/h(r) \to \infty$. However, for $p <3-n$, the integral remains bounded at infinity. Thus, we have
\begin{equation}\label{eq:limit Ir}
    \lim_{r \nearrow 1} I(r) =- \lim_{r \nearrow 1}  h(r)^{n+p-2} C_{n,p}=\begin{cases} -\infty & \text{ if } p<2-n, \\
    -C_{n,p} & \text{ if } p = 2-n, \\
     0 & \text{ if } 2-n < p < 3-n, \\
    \end{cases}
\end{equation}
where, using $p<3-n$, 
\[
C_{n,p}=\int_0^{\infty} s^{n-2}(s^2+1)^{\frac{p}{2}-1}   \dd s >0.
\]
Let us now analyze the left side of \eqref{eq:limit Ir} when $p=3-n$. In this case we need to deal with 
\[
h(r)\int_0^{\frac{r}{h(r)}} s^{n-2}(s^2+1)^{\frac{3-n}{2}-1}   \dd s.
\]
We split the domain of integration into $(0,1)$ and $(1,r/h(r)).$ 
The integral close to origin is finite and thus $h(r) \int_0^{1} s^{n-2}(s^2+1)^{\frac{3-n}{2}-1}   \dd s \to 0$ as $r \nearrow 1$. For the integral on $(1,r/h(r))$ we have, as $n\geq 2$, 
\[
\int_1^{\frac{r}{h(r)}} s^{n-2}(s^2+1)^{\frac{3-n}{2}-1}   \dd s \leq  \int_1^{\frac{r}{h(r)}} s^{n-2}s^{3-n-2}   \dd s = \int_1^{\frac{r}{h(r)}}s^{-1}\dd s =  \log \left( \frac{r}{h(r)}\right).
\]
From  $h(r)\log \left( \frac{r}{h(r)}\right) \to 0$ as $r \nearrow 1$, we conclude that 
$$
  \lim_{r\nearrow 1}I(r)=0\quad\text{ if } p=3-n.
$$
Collecting the above results, evaluating the first variation of $D_r$ at $r=1$, we obtain
\begin{equation*}
      \lim_{r\nearrow 1} \frac{d}{dr} E_p( D_r)  =  1 + p \lim_{r \nearrow 1} I(r)= \begin{cases}
          \infty   & \text{ if } p<2-n, \\
     1 -p  C_{n,p} >0 & \text{ if } p = 2-n, \\
     1& \text{ if } p>2-n. \\
      \end{cases}
\end{equation*}
Summing the contribution \eqref{Mr prime} of $M_r$ at $r=1$ leads to \eqref{eq:thesis_first}.

\end{proof}

If $p>2-n$, we need to  compute the second variation of $E_p(\p \Omega_r)$, since the first variation gives no information. 

\begin{Lemma}\label{lm:second_variation_Ep}Let $n\geq 2$ and $p>2-n$ such that $p\neq 0$. Then$$  E_p'':=\frac{1}{2\beta_{n-2}}\lim_{r\nearrow 1}  \frac{d^2}{dr^2} E_p(\partial\Omega_r) 
  =\begin{cases}
    -p\,\displaystyle{\frac{p+n-4}{p+n-3}} & \text{ if } p>3-n, \\[3mm]
    -\infty & \text{ if } 2-n < p \le 3-n \text{ and } n \ge 3, \\[2mm]
    +\infty & \text{ if } 0 < p \le 1 \text{ and } n = 2.
   \end{cases}$$
   \end{Lemma}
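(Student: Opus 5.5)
The plan is a direct computation. Write $E_p(\partial\Omega_r)/(2\beta_{n-2}) = E_p(M_r)+E_p(D_r)$, differentiate each piece twice in $r$, and pass to the limit $r\nearrow 1$. Throughout I use $h(1)=0$, $h'(1)=-1$, $h''(1)=n-2$ and the rescaling $t=h(r)s$ from the proof of Lemma \ref{lemma:first variation of Ep}.

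\textbf{Lateral part.} Write $E_p(M_r)=r^{n-2}G(r)$ with $G(r)=\int_0^{h(r)}(r^2+x_n^2)^{p/2}\dd x_n$. Then $G(1)=0$ and $G'(1)=h'(1)=-1$. Differentiating $G'=(r^2+h^2)^{p/2}h' + pr\int_0^h(r^2+x_n^2)^{p/2-1}\dd x_n$ once more, every term carrying a factor $h$ or an integral over $(0,h)$ vanishes at $r=1$. The surviving terms give $G''(1)=p\,h'(1)+h''(1)+p\,h'(1)=n-2-2p$. Hence
\[
\lim_{r\nearrow1}\frac{d^2}{dr^2}E_p(M_r)=2(n-2)G'(1)+G''(1)=-(n-2)-2p .
\]
This part has no singular behaviour for any $p>1-n$.

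\textbf{Bases.} From \eqref{EpDr prime}, $\frac{d}{dr}E_p(D_r)=r^{n-2}(r^2+h^2)^{p/2}+p\,I(r)$, where $I=hh'J$ and $J(r)=\int_0^r t^{n-2}(t^2+h^2)^{p/2-1}\dd t$. The first term contributes $(n-2)+p$ in the limit. For the second, I compute
\[
I'=(h'^2+hh'')J+hh'\,r^{n-2}(r^2+h^2)^{p/2-1}+(p-2)h^2h'^2\,L,
\]
with $L=\int_0^r t^{n-2}(t^2+h^2)^{p/2-2}\dd t$.

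If $p>3-n$, then $J\to\int_0^1 t^{n+p-4}\dd t=1/(n+p-3)$, and the middle term tends to $0$. For the last term, the substitution $t=hs$ gives $h^2L=h^{n+p-3}\int_0^{r/h}s^{n-2}(s^2+1)^{p/2-2}\dd s$. This tends to $0$ because $n+p-3>0$: split into the cases where the $s$-integral converges, diverges logarithmically, or grows like $(r/h)^{n+p-5}$, exactly as in the proof of Lemma \ref{lemma:first variation of Ep}. So $\frac{d^2}{dr^2}E_p(D_r)\to (n-2)+p+\frac{p}{n+p-3}$. Adding the lateral part gives
\[
-p+\frac{p}{n+p-3}=-p\,\frac{p+n-4}{p+n-3}.
\]

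\textbf{Singular range $2-n<p\le 3-n$.} Here $J$ blows up and the main step is to capture the leading order of $I'$ precisely. In rescaled form, $J=h^{n+p-3}A(r)$ and $h^2L=h^{n+p-3}B(r)$, where $A,B$ are the integrals of $s^{n-2}(s^2+1)^{p/2-1}$ and $s^{n-2}(s^2+1)^{p/2-2}$ over $(0,r/h)$. For $p<3-n$ both converge to Beta integrals $A_\infty,B_\infty$, and $h^{n+p-3}\to\infty$. The leading part of $I'$ is therefore $h^{n+p-3}\bigl(A_\infty+(p-2)B_\infty\bigr)$, since the terms $hh''J$ and the middle term are of lower order. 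The two divergent contributions have opposite signs, so their sum must be checked. With $a=\tfrac{n-1}{2}$ and $b=\tfrac{3-n-p}{2}$ one has $B_\infty/A_\infty=b/(a+b)$ and $a+b=\tfrac{2-p}{2}$. Hence
\[
A_\infty+(p-2)B_\infty=A_\infty(1-2b)=A_\infty(n+p-2)>0 .
\]
Thus $I'\to+\infty$, and $E_p''$ has the sign of $p$.

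For the endpoint $p=3-n$, I will show $A\sim\log(1/h)$ while $B$ stays bounded, so again $I'\to+\infty$. The conclusion is that $E_p''=-\infty$ when $n\ge3$, because then $p\le 3-n\le0$, with $p\ne0$ giving $p<0$. When $n=2$ and $0<p\le1$, the result is $E_p''=+\infty$.

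I expect this cancellation check of the two divergent terms to be the main obstacle; the regular case is routine bookkeeping.
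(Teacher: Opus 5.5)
Your proposal is correct and follows the paper's proof essentially step by step. It uses the same split into $E_p(M_r)+E_p(D_r)$, the same second-derivative formula for the bases with $J_1=J$ and $J_2=h^2L$, the same case distinction, and the same limits $2-n-2p$ (lateral part) and $n-2+p+\frac{p}{p+n-3}$ (bases, regular range). The only variation is in the range $2-n<p<3-n$: you check the cancellation via the Beta recurrence $B(a,b+1)=\frac{b}{a+b}B(a,b)$ applied to the rescaled limits. The paper instead uses the exact identity $r^{n-1}(r^2+h^2)^{p/2-1}=(n+p-3)J_1-(p-2)J_2$, obtained by integrating $\frac{d}{dt}\bigl[t^{n-1}(t^2+h^2)^{p/2-1}\bigr]$. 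Both give the positive coefficient $n+p-2$; indeed your ratio $B_\infty/A_\infty=\frac{n+p-3}{p-2}$ is exactly the rescaled $r\nearrow 1$ limit of that identity.
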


\begin{proof} We will use the formulas obtained for the first variation in the proof of Lemma \ref{lemma:first variation of Ep}.

The second variation of the energy on the lateral surface gives
\begin{align*}
    \frac{d^2}{dr^2}E_p(M_r) &= (n-2)(n-3)r^{n-4}\int_0^{h(r)}(r^2+x_n^2)^{\frac{p}{2}}\dd x_n  \\
    &+(n-2)r^{n-3}(r^2+h(r)^2)^{\frac{p}{2}}  h'(r)  \\
    &+(n-2)r^{n-3} \int_0^{h(r)}\frac{d}{dr} \left((r^2+x_n^2)^{\frac{p}{2}} \right)\dd x_n  \\
    &+ (n-2)r^{n-3}(h(r)^2+r^2)^{\frac{p}{2}}h'(r)  \\
    &+ r^{n-2}\left[p(h(r)^2+r^2)^{\frac{p}{2}-1} \left( r+h(r)h'(r)\right)h'(r) +(h(r)^2+r^2)^{\frac{p}{2}}h''(r) \right] \\
    &+  p (n-2)\,  r^{n-3} \int_{0}^{h(r)}  r \left( r^2 + t^2 \right)^{\frac{p}{2}-1} \dd t \\
    &+  p \,  r^{n-2} r\left( r^2 + h^2(r)\right)^{\frac{p}{2}-1}h'(r) \\
    &+ p \, r^{n-2} \int_0^{h(r)} \frac{d}{dr} \left( (r^2 +t^2)^{\frac{p}{2}-1} r \right) \dd t.
\end{align*}

When plugging $r=1$ all the terms having the integral vanish since $h(1)=0$ and we are left with
\begin{align}\label{EpMr double prime}
   \lim_{r\nearrow 1}  \frac{d^2}{dr^2}E_p(M_r) = 2(n-2)h'(1) + 2p h'(1) +h''(1)   =  2-n-2p,
\end{align}
since $h'(1)=-1$ and $h''(1)=n-2$.

Computing the second variation contribution of the bases $D_r$ yields
\begin{align*}
    \frac{d^2}{dr^2}E_p(D_r)&= (n-2) r^{n-3} \left(r^2 +h(r)^2 \right)^{\frac{p}{2}}   \\
    &+ p r^{n-2} \left(r^2 +h(r)^2 \right)^{\frac{p}{2}-1} \left(r+h(r)h'(r) \right)   \\
    &+ p   \left( h(r)h''(r)+h'(r)^2\right) \int_0^r  \left( t^2 +h(r)^2 \right)^{\frac{p}{2}-1} t^{n-2} \dd t  \\
    &+ph(r)h'(r) \left[ \left(r^2+h(r)^2 \right)^{\frac{p}{2}-1} r^{n-2} + (p-2) h(r)h'(r)\int_{0}^{r} \left( t^2 +h(r)^2 \right)^{\frac{p}{2}-2} t^{n-2} \dd t\right].
\end{align*}
Using that $h(1)=0,$ $h'(1)=-1,$ and $h''(1)=n-2$ we can write the second derivative of $E_p(D_r)$ as
\begin{align}\label{EpDr double prime with limit}
    \lim_{r\nearrow 1}\frac{d^2}{dr^2}E_p(D_r) &= \lim_{r\nearrow 1} \left\{ n-2 + p+p [(n-2)h(r)J_1(r)+J_1(r) + (p-2)J_2(r)]\right\},  
     \end{align}
where
\begin{align*}
    J_1(r)&:= \int_0^r  \left( t^2 +h(r)^2 \right)^{\frac{p}{2}-1} t^{n-2} \dd t, \\
    J_2(r)&:= h(r)^2 \int_{0}^{r} \left( t^2 +h(r)^2 \right)^{\frac{p}{2}-2} t^{n-2} \dd t.
\end{align*}

To analyze the above expression as $r \nearrow 1$, we consider different ranges of $p$.

\textit{\underline{Case $p>5-n$.}} 
In this case, all terms in \eqref{EpDr double prime with limit} are finite and thus taking the limit gives

     \begin{align*}
  \lim_{r\nearrow 1} \frac{d^2}{dr^2}E_p(D_r) &=   n-2 + p+p  \int_0^{1} t^{p+n-4} \dd t=  n-2+p + \frac{p}{p+n-3}.
 \end{align*}

Thus summing with the contribution \eqref{EpMr double prime} of $M_r$ leads to 

\begin{align}\label{eq:E_p_doublep>5-n}
    E_p'' &=   2-n-2p +   n-2+p + \frac{p}{p+n-3} =-p \; \frac{p+n-4}{p+n-3}.
\end{align}
\textit{\underline{Case $3-n<p\leq 5-n$.}}  
In this case we have $\lim_{r\nearrow 1}J_1=\frac{1}{p+n-3}$. In order to deal with $J_2$ we use the change of variables $t=h(r)s$ and obtain
\begin{align}\label{eq:J_2_contr}
    J_2(r)=h(r)^{n+p-3} \int_0^{\frac{r}{h(r)}} s^{n-2} (1+s^2)^{\frac{p}{2}-2} \dd s.
\end{align}
Whenever $p <5-n$ the integral is finite and thus $\lim_{r \nearrow 1} J_2=0$. In the borderline case $p=5-n$ for $s>1$ we have
\[
\int_{1}^{\frac{r}{h(r)}} s^{n-2} (1+s^2)^{\frac{p}{2}-2} \dd s \leq \int_{1}^{\frac{r}{h(r)}} s^{-1} \dd s  = \log \left( \frac{r}{h(r)}\right).
\]
 Computing $\lim_{r \nearrow 1} h(r)^2  \log \left( \frac{r}{h(r)}\right)$ reveals that $\lim_{r \nearrow 1} J_2$ vanishes.
 
 This implies that also in this range we obtain \eqref{eq:E_p_doublep>5-n}.

\smallskip  
\textit{\underline{Case $p=3-n$.}}
In this case, we analyze the exact explosive behavior of $J_1(r)$. Indeed, from \eqref{eq:J_2_contr} we deduce that the term $J_2(r)$ gives a finite contribution. For $J_1(r)$, using once again $t=h(r)s$, we obtain
\begin{align} \label{eq:J_1}
    J_1(r) = h(r)^{p+n-3} \int_0^{\frac{r}{h(r)}} (s^2+1)^{\frac{p}{2}-1} s^{n-2} \dd s = \int_0^{\frac{r}{h(r)}} (s^2+1)^{\frac{1-n}{2}} s^{n-2} \dd s.
\end{align}

As $r\nearrow 1$, the integrand behaves asymptotically as $s^{-1}$. Thus, $J_1(r)$ diverges  as $\log(r/h(r))$.
On the other hand $h(r)J_1(r)$ vanishes as $r \nearrow 1$.
Therefore, since the contribution of the lateral surface is finite, \eqref{EpDr double prime with limit} shows that the behavior of $E_p''$ is uniquely determined by the unbounded term $p J_1(r)$.
For $n>3$, since $p=3-n<0$, we get $E_p''=-\infty$.
While if $n = 2$, then $p = 1$ and we obtain that $E_p'' = \infty$.

\smallskip
\textit{\underline{Case $2-n<p<3-n$.}} 
In the remaining case where $2-n < p < 3-n $  we have that both $J_1$ and $J_2$ are unbounded as $r \nearrow 1$. For this, combining \eqref{EpMr double prime} and \eqref{EpDr double prime with limit}, we express $E_p''$ as
\begin{equation} \label{asymptotic_d2E}
 E_p'' = -p+p[(n-2)h(r)J_1(r)+J_1(r)+(p-2)J_2(r)] +  o(1)\quad \text{as }r \nearrow 1.
\end{equation}
To deal with the case $p \in (2-n,3-n)$ we express conveniently $J_1(r) + (p-2)J_2(r)$. For this we note that (using simply $n+p-3=(n-1)+(p-2)$ in the second identity)
\begin{align*}
	\frac{\dd}{\dd t} \left[ t^{n-1} (t^2 + h^2(r))^{\frac{p}{2}-1}\right] &= (n-1)t^{n-2} (t^2 + h^2(r))^{\frac{p}{2}-1} + (p-2)t^{n}(t^2 + h^2(r))^{\frac{p}{2}-2} \\
	&=(n+p-3) t^{n-2}(t^2 + h^2(r))^{\frac{p}{2}-1} - (p-2) h^2(r) t^{n-2}(t^2 + h^2(r))^{\frac{p}{2}-2}.
\end{align*}
Note that both terms on the right hand side are integrable in $t$ on the interval $(0,r)$ for every $r<1$ for $n\geq 2.$ Thus, 
integrating both sides of the above expression with respect to $t$ from $0$ to $r$ yields the relation
\[
r^{n-1}(r^2 + h^2(r))^{\frac{p}{2}-1} =(n+p-3) J_1(r) -(p-2)J_2(r)
=(n+p-2)J_1(r)-J_1(r)-(p-2)J_2(r),
\]
which implies
\[
J_1(r) +(p-2)J_2(r) = (n+p-2)J_1(r) -r^{n-1}(r^2 + h^2(r))^{\frac{p}{2}-1} 
\]

Thus, as $r \nearrow 1$, we can further simplify \eqref{asymptotic_d2E} as
\begin{align} \label{eq:final1}
 E_p'' = -p+p\left[(n-2)h(r)J_1(r)+(n+p-2)J_1(r) -r^{n-1}(r^2 + h^2(r))^{\frac{p}{2}-1} \right] + o(1)
\end{align}
as $r \nearrow 1.$ The last term in the brackets $[\ldots]$ tends to $1$ as $r \nearrow 1$.  For $n\neq 2$, using that $p<3-n$ we have that
$$
  \lim_{r\nearrow 1}J_1(r)=\infty.
$$
Similarly, for the first term in \eqref{eq:final1}, since $p>2-n$, we get as in \eqref{eq:J_1}
\[
\lim_{r\nearrow 1} h(r) J_1(r) = \lim_{r\nearrow 1}  h^{n+p-2}(r) \int_0^{\frac{r}{h(r)}} s^{n-2} (1+s^2)^{\frac{p}{2}-1} \dd s =0.
\]
Hence, since $p < 0$ and $(n+p-2) > 0$, we conclude
\[
E_p''= -\infty\qquad \text{ if }p\in (2-n,3-n)\quad\text{ and }p\neq 2.
\]

For $n=2$ and $0<p<1$ the first term in \eqref{eq:final1} vanishes.  In this case $ E_p''$  has the same behavior of $
\lim_{r \nearrow 1}p^2J_1(r)$.
Since $0<p<1$ we obtain $E_p''=\infty$ due to
\[
\lim_{r \nearrow 1}J_1(r) = \lim_{r\nearrow 1}  h^{p-1}(r) \int_0^{\frac{r}{h(r)}} s^{n-2} (1+s^2)^{\frac{p}{2}-1} \dd s =\infty,
\]
which concludes the proof in this case. 
\end{proof}

We are now ready to prove the main theorem of this section. 

\begin{customproof}{Theorem \ref{thm:nonempty_interior}}
We proceed by contradiction. Assume that the extremal body $\Omega^*$ (the minimizer for $E_p$ if $p>0,$  or the maximizer for $E_{-\alpha}$ given by Theorem \ref{Thm:intro:existence1}) has empty interior. As previously noted, such a convex body must be a flat disk $\Omega_1$ contained in $\R^{n-1}$, by standard symmetrization techniques. 
We have seen that the first variation of $E_p$ computed in Lemma \ref{lemma:first variation of Ep} does not give any contradiction in the case $p\leq 2-n \leq 0$ and supports the Conjecture \ref{conjecture:intro:alpha}. So let us assume $p>2-n.$ We will distinguish three cases, corresponding to the three cases of the second variation formula  of Lemma \ref{lm:second_variation_Ep}. However, the last two cases in Lemma \ref{lm:second_variation_Ep} are not strictly necessary for the proof, as they do not yield a contradiction. Nevertheless, we include them for completeness. These other cases show that for almost all the other ranges of $p$ and $\alpha$ complementing the conditions of Theorem 
 \ref{thm:nonempty_interior} the flat disc $\Omega_1=\Omega^*$ is indeed a local minimizer for $E_p$ if $p>0,$ respectively a local maximizer for $E_{-\alpha}$  among the cylinders $\{\Omega_r\}_{r\in (0,1]}$.  In the exceptional case $p=0$ or $p=4-n$ the second variation vanishes too and no information can be obtained. 
 
\textit{\underline{Case $n=2$ and $0<p\leq 1$}} 

In this case we wish to minimizer $E_p$, hence $E_p'=0$ and $E_p''=+\infty$ shows that $\Omega^*$ is a local minimizer among the cylinders. Thus there is no contradiction. 

\textit{\underline{Case $n\geq 3$ and $2-n<p\leq 3-n$.}} 

In this case $p\leq 0$ and we wish to maximizer $E_p$. Hence $E_p'=0$ and $E_p''=-\infty$ shows that $\Omega^*$ is indeed a local maximizer among the cylinders. Again there is no contradiction. 

\textit{\underline{Case $p>3-n$.}} 

In the range $p>3-n$ the sign of $E_p''$ is given by the following table 
$$
     E_p'' =-p \; \frac{p+n-4}{p+n-3}=\left\{ \begin{array}{clr}
              <0 & \text{ if } p <0, \, 3-n<p<4-n, & \\
              >0 & \text{ if } p <0,\, p>4-n, & \text{(a)} \\
              =0 & \text{ if } p=0 \text{ or } p=4-n,& \\
              >0 & \text{ if } p >0, \, 3-n<p<4-n,& \\
              <0 & \text{ if } p >0, \, p>4-n. & \text{(b)}\\
    \end{array}\right.
$$

Among the $5$ situations listed above, only (a) and (b) give a contradiction. For instance in  case (a) we have assumed that $\Omega^*$ is a maximizer (as $p<0$), but this contradicts the fact that $E_p''>0.$  Whereas in case (b), the second variation is strictly negative which contradicts the fact that $\Omega^*$ is a minimizer (as for $p>0$ we are minimizing $E_p$).

\end{customproof}

\subsection{Extremals of $E_{-\alpha}$ have origin on the boundary}\label{Sec:qualitative}

It is interesting to note that among the transformations preserving both convexity and perimeter, translation is the most elementary. Nevertheless, these shifts provide valuable information regarding the optimal sets. Indeed, when the weight of $E_{-\alpha}$ is highly singular, that is, $\alpha \in (n-2,n-1)$, then the origin has to lie on the boundary of the maximizer. Note that in the case $n=2$, this gives the full range of admissible $\alpha$. The same property holds for decreasing weights that are strictly subharmonic. This is the content of Theorem \ref{thm:shifting_generalized} which we now prove.

\begin{customproof}{Theorem \ref{thm:shifting_generalized}}
Note first that we can assume that $\Omega$ is nondegenerate, that is, it has nonempty interior, and that $0\notin\Omega^c$.   If $\Omega$ is degenerate, and $0\notin \Omega=\delomega$, then one can shift $\Omega$ towards the hyperplane parallel to $\Omega$ and containing the origin  $0$, and the energy $E_{\phi}$ would increase along this shift since $\phi$ is decreasing. Thus for degenerate $\Omega$ the result is obvious. Using that the weight is decreasing, a similar argument shows that if $0\in \Omega^c$, then $\Omega$ cannot be a maximizer.

We now argue by contradiction to rule out the case $0 \in \interior(\Omega)$.  Recall that $f(x):=\phi(|x|)$ and, since the distance between the origin and $\p \Omega$ is positive, all the derivatives of $f$ considered below are well-defined and continuous on $\p\Omega$.

Choose a direction $e \in \p B_1$. For $t>0$ consider a translation of $\Omega$ by an amount $t$ in the direction $e$, which we denote by $\Omega_t(e) = \Omega + te$. Observe that 
\[
E_{\phi}(\p \Omega_t(e)) = \int_{\p \Omega} f(x+te) \dH(x).
\]
 We want to consider variations of a maximizer under this translation and thus it will be convenient to differentiate the integrand of $E_{\phi}(\p \Omega_t(e))$ with respect to $t$.
The derivatives are
\begin{align*}
\frac{d}{dt} \big( f(x+te) \big) = \nabla f(x+te) \cdot e, \quad
\frac{d^2}{dt^2} \big( f(x+te) \big) = e^T \, D^2 f(x+te) \, e. 
\end{align*}
 By computing the first variation and second variation at $t=0$ of  $E_{\phi}$ we obtain
\begin{align}  \label{eq:first-variation_gen}
\frac{d}{d t} E_{\phi}(\p \Omega_t(e)) \Big|_{t=0} &=  \int_{\p \Omega} \nabla f(x) \cdot e    \dH(x) ,
\\ \label{eq:variations_for_t_0_gen}
\frac{d^2}{d t^2} E_{\phi}(\p \Omega_t(e)) \Big|_{t=0} &= \int_{\p \Omega}  e^T \, D^2 f(x) \, e \dH(x).
\end{align}

If $\Omega$ is a maximizer we have that 
\[
\left(\frac{d^2}{d t^2} E_{\phi}(\p \Omega_t(e)) \right)\!\Big|_{t=0} \leq 0 \quad \text{for any }e \in \p B_1.
\]
In particular, summing over all canonical directions $e_i$ for $i=1,\dots,n$ and using the expression given in \eqref{eq:variations_for_t_0_gen} we get

\[
\sum_{i=1}^{n} \left(\frac{d^2}{d t^2} E_{\phi}(\p \Omega_t(e_i)) \right)\!\Big|_{t=0} = \int_{\p \Omega} \Delta f(x)\dH(x) \leq 0.
\] 
This creates a contradiction since $\Delta f$ is a positive continuous function in $\R^n \setminus \{0 \}$ and $0 \notin \p \Omega$. Thus $0$ must lie on $\p \Omega$.

The case of  $\Omega$  being a minimizer and $\phi\in C^2(0,\infty)$ an increasing weight  with $\Delta f<0$ in $\R^n \setminus \{ 0\}$ follows by applying the above argument to $-\phi$.

The weight $|x|^{-\alpha}$ satisfies the hypotheses of the theorem since $\Delta \left( |x|^{-\alpha}\right) = \alpha(\alpha+2-n)|x|^{-\alpha-2} > 0$ for $n-2 < \alpha < n-1$.
\end{customproof}

For $\alpha \in (0,n-2)$ (or $p>0$ when considering $E_{p}$), the same reasoning of the proof of Theorem \ref{thm:shifting_generalized} implies that the optimizers satisfy a rigidity condition resembling a barycenter condition. We will call this property an $\alpha$-weighted (or $p$-weighted) barycentric condition. We state it compactly below, considering the energy involving the singular kernel for negative $p$ too, that is, $E_{-\alpha}$.

\begin{Corollary} \label{cor:weighted_barycenter}
    Let $n\geq 2$ and  $p>2-n$. If $\Omega \subset \R^n$ is critical for $E_{p}$ among all convex bodies  of equal perimeter $E_0( \p \Omega)=\lambda$, for some $\lambda>0$,  then $\p \Omega$ satisfies a $p$-weighted barycentric condition, namely
\begin{equation} \label{weighted_bar_condition}
    \int_{\p \Omega} |x|^{p-2}x \dH(x) = 0
\end{equation}
\end{Corollary}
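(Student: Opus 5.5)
The plan is to reuse the translation argument from the proof of Theorem \ref{thm:shifting_generalized}, but to stop at the \emph{first} variation. Translations $\Omega_t(e)=\Omega+te$, $e\in\p B_1$, preserve both convexity and perimeter. If $\Omega$ is critical for $E_p$ under the constraint $E_0(\p\Omega)=\lambda$, the function $t\mapsto E_p(\p\Omega_t(e))$ must therefore have vanishing derivative at $t=0$. With $f(x)=|x|^p$ we have $\nabla f(x)=p|x|^{p-2}x$, so \eqref{eq:first-variation_gen} reads
\[
0=\frac{d}{dt}E_p(\p\Omega_t(e))\Big|_{t=0}=p\int_{\p\Omega}|x|^{p-2}x\cdot e\dH(x).
\]
For $p\neq0$, letting $e$ range over the canonical basis gives \eqref{weighted_bar_condition}. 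For $p=0$ the statement is void, or read with the factor $p$ removed; presumably $p\neq0$ is intended.

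The main work is justifying differentiation under the integral. The proof of Theorem \ref{thm:shifting_generalized} avoided this by assuming $0\notin\p\Omega$. Here the origin may lie on $\p\Omega$, so the argument needs a separate step.

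\textbf{Integrability.} The vector field $|x|^{p-2}x$ has size $|x|^{p-1}$. On $\p\Omega$, which is locally a Lipschitz graph and hence locally comparable to $\R^{n-1}$, this is integrable near $0$ exactly when $p-1>1-n$, i.e.\ $p>2-n$. This is where the hypothesis enters. The same holds in the degenerate case, where the factor $\eta_\Omega=2$ is simply carried along.

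\textbf{Differentiability at $t=0$.} I would write
\[
\frac{E_p(\p\Omega_t(e))-E_p(\p\Omega)}{t}=\int_{\p\Omega}\int_0^1\nabla f(x+ste)\cdot e\,\dd s\dH(x).
\]
The integrand is bounded by $|p|\int_0^1|x+ste|^{p-1}\dd s$. I need uniform integrability of these functions on $\p\Omega$ for small $|t|$. For $p\geq1$ they are locally bounded, so this is trivial. For $2-n<p<1$, the $(n-1)$-dimensional estimate
\[
\sup_{y\in\R^n}\int_{\p\Omega\cap B_\rho(y)}|x-y|^{p-1}\dH(x)\leq C\rho^{n-2+p}
\]
holds uniformly in $y$. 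It follows from the bounded-slope graph decomposition of $\p\Omega$ (as in \cite[Lemma 2.2]{Csato2024}, used in Lemma \ref{lm:bound_on_max_radius_decreasing}) together with the fact that the distance to a point in $\R^n$ dominates the distance between the projected points. Since $\nabla f(x+ste)\to\nabla f(x)$ for $\Hau^{n-1}$-a.e.\ $x$, this gives uniform integrability, and Vitali's theorem lets me pass to the limit. Both one-sided derivatives then exist and equal $\int_{\p\Omega}\nabla f\cdot e$.

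\textbf{Conclusion.} Criticality means both one-sided derivatives vanish; for extremals, $t=0$ is an interior extremum of a differentiable function. The first-variation identity above then yields \eqref{weighted_bar_condition}. The only real obstacle is the uniform local integrability estimate when $0\in\p\Omega$ and $p<1$; everything else is the computation already done in the proof of Theorem \ref{thm:shifting_generalized}.
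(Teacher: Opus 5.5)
Your proposal is correct and is essentially the paper's proof: translations $\Omega+te$ preserve convexity and perimeter, so criticality forces $\frac{d}{dt}E_p(\p\Omega_t(e))|_{t=0}=p\int_{\p\Omega}|x|^{p-2}x\cdot e\dH(x)=0$ for every direction $e$. You also supply two things the paper does not: a real justification of differentiating under the integral when $0\in\p\Omega$ (your uniform estimate $\sup_y\int_{\p\Omega\cap B_\rho(y)}|x-y|^{p-1}\dH(x)\leq C\rho^{n-2+p}$ combined with Vitali's theorem, where the paper only says the first variation ``remains bounded'' for $p>2-n$), and the correct observation that $p\neq 0$ is implicitly needed, since for $p=0$ and $n\geq 3$ the conclusion fails (e.g.\ for a ball far from the origin).
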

\begin{proof}
The proof is based on the computations in the proof of Theorem \ref{thm:shifting_generalized}. Indeed, whenever $p > 2-n$, the first variation, see \eqref{eq:first-variation_gen}, remains bounded. Hence, if $\Omega$ is critical for $E_{p}$, then for all $e \in \p B_1$  the following condition holds
\[
0=\frac{d}{d t} E_{p}(\p \Omega_t(e)) \Big|_{t=0} = p  \left( \int_{\p \Omega} |x|^{p-2}  x \,  \dH(x)  \right) \cdot e
\]
which implies \eqref{weighted_bar_condition}.
\end{proof}

\section{Shape of extremals among symmetric curves in the plane}

\subsection{Maximizing $E_{-\alpha}$ in  $\R^2$}
\label{section:singular}

In this section we present the first proof we have found of Theorem \ref{thm:intro:alpha and n is 1}, respectively of the inequality \eqref{intro:functional ineq}. It is more elementary and shorter than the second proof, that is, that of Theorem \ref{Thm:general_phi}.

Let $\Gamma_1=\Gamma \cap \{(x,y)\in\R^2;\, x\geq 0,\, y\geq 0\}$ be the part of  the convex curve $\Gamma$ within the first quadrant. Then, due to the symmetry assumption on $\Gamma$, Theorem \ref{thm:intro:alpha and n is 1} is equivalent to the following inequality
\begin{equation}\label{eq:quadrant_lemma_alpha_neg}
  \int_{\Gamma_1} |x|^{-\alpha} \dHo(x) \leq  \frac{1}{1-\alpha}|\Gamma_1|^{1-\alpha}.
  \end{equation}
  If there is equality, then $\Gamma_1$ is an interval on the $y$-axis, that is, $\Gamma_1=\{0\} \times [0,|\Gamma_1|].$

In the first lemma we deal with the special case of inequality \eqref{eq:quadrant_lemma_alpha_neg} where the curve in the first quadrant is a straight line. This lemma will only be used to quickly address the equality case in Theorem \ref{thm:intro:alpha and n is 1}.

\begin{Lemma}\label{gy:lemma:quadrant straight line}
Let $r\in [0,1]$ and $\Gamma=\{(rt,1-t);\, t\in [0,1]\}.$ Then \eqref{eq:quadrant_lemma_alpha_neg} holds and there is equality in \eqref{eq:quadrant_lemma_alpha_neg} if and only if $r=0.$
\end{Lemma}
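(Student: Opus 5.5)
We parametrize the segment $\Gamma=\{(rt,1-t);\,t\in[0,1]\}$ by $t$. Its length is $L=\sqrt{1+r^2}$, and $\dHo = L\,\dd t$. Inequality \eqref{eq:quadrant_lemma_alpha_neg} then reads
$$
L\int_0^1 \big(r^2t^2+(1-t)^2\big)^{-\alpha/2}\dd t \le \frac{L^{1-\alpha}}{1-\alpha},
\qquad\text{i.e.}\qquad
\int_0^1 \Big(\frac{r^2t^2+(1-t)^2}{1+r^2}\Big)^{-\alpha/2}\dd t \le \frac{1}{1-\alpha}.
$$
The right-hand side equals $\int_0^1 s^{-\alpha}\dd s$. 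So the plan is to compare the normalized distance $d(t):=|x(t)|/L$, as a function on $[0,1]$, with the function $s\mapsto s$ via a rearrangement argument. Since $\tau\mapsto\tau^{-\alpha}$ is decreasing, it suffices to show that the distribution function of $d$ is dominated by that of $s$, namely
$$
|\{t\in[0,1]:\, d(t)<\lambda\}|\le \lambda \qquad\text{for all }\lambda>0.
$$
Equivalently, the decreasing rearrangement of $d^{-\alpha}$ lies below $s^{-\alpha}$. The layer-cake formula then gives the inequality.

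To verify the distribution bound, note that $d(t)^2$ is a convex quadratic in $t$. The set $\{d<\lambda\}$ is therefore an interval, namely the intersection of $[0,1]$ with the chord of the disk $B_{\lambda L}$ cut by the line containing $\Gamma$, measured in the parameter $t$. The length of this chord is $2\sqrt{\lambda^2L^2-\delta^2}$, where $\delta=r/L$ is the distance from the origin to the line. Its $t$-length is this quantity divided by $L$.

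The main obstacle is that the crude bound $2\lambda$ is a factor $2$ off. We must use that the foot of the perpendicular from the origin to the line is close to the endpoint $(0,1)$. Indeed, the foot sits at parameter $t_0=1/L^2$, and one endpoint is at $t=0$. Hence the relevant set is contained in
$$
\Big[\max\big(0,\,t_0-\sqrt{\lambda^2-\delta^2/L^2}\big),\ t_0+\sqrt{\lambda^2-\delta^2/L^2}\Big].
$$
We then do a short case analysis: either the interval is cut off at $0$, or it is not. In the first case the measure is at most $t_0+\sqrt{\lambda^2-\delta^2/L^2}$. Here we check $t_0+\sqrt{\lambda^2-r^2/L^4}\le\lambda$. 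This is equivalent to $\lambda^2-r^2/L^4\le(\lambda-1/L^2)^2$, which reduces to $2\lambda/L^2\le (1+r^2)/L^4=1/L^2$, i.e.\ $\lambda\le 1/2$. In the second case both endpoints are inside, and one checks this forces $\lambda$ to be small, again yielding measure at most $\lambda$. If these cases fail to close cleanly, the fallback is a direct approach: prove monotonicity in $r$ of the left-hand side by differentiating under the integral, or compute the integral explicitly for $\alpha$ via the substitution along the line.

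For the equality case, when $r=0$ we have $d(t)=1-t$, and equality is immediate. When $r>0$ we have $\delta>0$, so $d$ is bounded below by a positive constant. Then $\{d<\lambda\}=\emptyset$ for small $\lambda>0$, giving strict inequality in the distribution function on a set of positive measure of $\lambda$. Combined with the strict monotonicity of $\tau^{-\alpha}$, this yields strict inequality.
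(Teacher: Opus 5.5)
\paragraph{The key claim is false.} Your reduction to $\int_0^1 d(t)^{-\alpha}\dd t\le\int_0^1 s^{-\alpha}\dd s$, with $d(t)=|x(t)|/L$ and $L=\sqrt{1+r^2}$, is correct. The central claim $|\{t\in[0,1]:\, d(t)<\lambda\}|\le\lambda$, however, is false for every $r\in(0,1]$.

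On the segment, $|x(t)|\le\max\{1,r\}=1$ by convexity of the norm, so $d\le 1/L<1$. Hence for every $\lambda\in(1/L,1)$ the set $\{d<\lambda\}$ is all of $[0,1]$, and its measure is $1>\lambda$. Put differently, $d$ cannot be stochastically larger than the uniform variable $s$, since $\sup d<1=\sup s$.

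Your own case analysis already detects this. The case where the interval is truncated at $0$ is exactly $\lambda>1/L\ge 1/\sqrt2$, and there your algebra requires $\lambda\le\tfrac12$, so that case never closes. The geometric picture is also off: $t_0=1/(1+r^2)\ge\tfrac12$, so the foot of the perpendicular is closer to $t=1$, i.e. to $(r,0)$, not to $(0,1)$. The fallbacks you list are not carried out. Your equality argument rests on the same false domination. So as written, no proof remains.

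\paragraph{Convexity is needed.} First-order domination of distribution functions would prove the inequality for every decreasing weight. It fails, for example, for the indicator of $[0,\lambda)$ with $\lambda\in(1/L,1)$. The statement genuinely needs the convexity of the weight. At best one has a majorization-type comparison as in Lemma~\ref{prop:maj_tool}, and exploiting it again uses convexity.

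The paper uses convexity directly and pointwise. With $\theta=r^2/(1+r^2)$ one has $d(t)^2=\theta t^2+(1-\theta)(1-t)^2$. Convexity of $u\mapsto u^{-\alpha/2}$ then gives $d(t)^{-\alpha}\le\theta t^{-\alpha}+(1-\theta)(1-t)^{-\alpha}$. Integrating over $[0,1]$ yields exactly $\frac{1}{1-\alpha}$. Strict convexity makes the pointwise inequality strict for all $t\neq\tfrac12$ as soon as $\theta>0$, i.e. $r>0$, which also settles the equality case.
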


\begin{proof}
Under the current assumptions \eqref{eq:quadrant_lemma_alpha_neg} is equivalent to
\[
\int_0^1 \left( r^2 t^2 +(1-t)^2 \right)^{-\alpha/2} \dd t \leq \frac{(1+r^2)^{-\alpha/2}}{1-\alpha}.
\]
Since the map $t \mapsto t^{-\alpha/2}$ is strictly convex we have
\begin{equation}\begin{split}\label{gy:eq:straight line}
    \left( r^2 t^2 +(1-t)^2 \right)^{-\alpha/2} &= (1+r^2)^{-\alpha/2} \left( \frac{r^2}{1+r^2} t^2 + \frac{1}{1+r^2}(1-t)^2 \right)^{-\alpha/2} \\
    &\leq (1+r^2)^{-\alpha/2} \left( \frac{r^2}{1+r^2} t^{-\alpha} + \frac{1}{1+r^2}(1-t)^{-\alpha} \right).
\end{split}\end{equation}
Moreover the inequality \eqref{gy:eq:straight line} is strict for all $t\in (0,1)$, unless $r=0.$
Hence we obtain
\begin{align*}
    &\int_0^1 \left( r^2 t^2 +(1-t)^2 \right)^{-\alpha/2} \dd t  \\ &\leq (1+r^2)^{-\alpha/2} \left( \frac{r^2}{1+r^2}  \int_0^1 t^{-\alpha} \dd t+ \frac{1}{1+r^2} \int_0^1 (1-t)^{-\alpha} \dd t \right) = \frac{(1+r^2)^{-\alpha/2}}{1-\alpha}.
\end{align*}
If there is equality, then there must be equality in \eqref{gy:eq:straight line} for almost every $t\in (0,1)$, and thus $r=0.$

\end{proof}

\begin{customproof}{Theorem \ref{thm:intro:alpha and n is 1}}
Let $\Gamma_1$ be as in \eqref{eq:quadrant_lemma_alpha_neg} and let $\gamma:[0, L] \to \R^2$ be the arc-length parametrization of $\Gamma_1$ with $L=|\Gamma_1|$ and
counterclockwise orientation. Without loss of generality, we can restrict our attention to the case where $\gamma(0)=(r,0)$ for some $r \in (0,1)$ and $\gamma(L)=(0,1)$, as any other case can be obtained from this by scaling and if necessary also interchanging the axes (i.e., taking the reflection of $\Gamma_1$ with respect to the diagonal). Since $\Gamma_1$ is the graph of a concave and decreasing function, $\gamma$ is Lipschitz, $|\gamma'|=1$, and both $\gamma_1'$ and $\gamma_2'$ are decreasing.

Therefore we obtain for $i=1,2$ and every $s \in (0,L)$ that
    \begin{align*}
        \gamma_i(s)-\gamma_i(0)&=\int_0^s \gamma_i'(t) \dd t =s \fint_0^s \gamma_i'(t) \dd t \\
        &\geq s \fint_0^L \gamma_i'(t) \dd t = \frac{s}{L} \left( \gamma_i(L)-\gamma_i(0)\right)
    \end{align*}
   where we used the fact that if a function is decreasing, its integral mean is also decreasing. This gives that
\begin{equation} \label{prop:geom_est_on_gamma}
\gamma_i(s) \geq \left( 1- \frac{s}{L}\right)\gamma_i(0) + \frac{s}{L} \gamma_i(L)\quad\text{ for }i=1,2.
\end{equation}
Consequently, by using the initial and endpoint values of $\gamma$, we obtain
\begin{equation}
\label{eq:gamma norm Prosenjit estim}
|\gamma(s)|^2 \geq r^2\left(1-\frac{s}{L} \right)^2 +\left(\frac{s}{L}\right)^2.
\end{equation}

Thus, we can estimate the weighted perimeter as 
\begin{align*}
     \int_{\Gamma_1} |x|^{-\alpha} \dHo(x) &=\int_0^{L} |\gamma(s)|^{-\alpha} \dd s \\
     &\leq  \int_0^{L} \left[ r^2\left(1-\frac{s}{L} \right)^2 +\left(\frac{s}{L}\right)^2\right]^{-\alpha/2} \dd s= L\int_0^1 \left[ r^2t^2 +(1-t)^2 \right]^{-\alpha/2} \dd t
\end{align*}
where we performed the change of variable $s=L(1-t)$.

Thus, we can establish \eqref{eq:quadrant_lemma_alpha_neg} by proving
\begin{equation} \label{eq:ineq_reduced}
\int_0^1 \left[ r^2t^2 +(1-t)^2 \right]^{-\alpha/2} \dd t \leq \frac{1}{1-\alpha}L^{-\alpha}.
\end{equation}

Thanks to the monotonicity of the perimeter of convex sets with respect to inclusion, we must have that $L\leq 1+r$. Here, we apply this monotonicity to a set $A$ which is in all four quadrants (obtained by extending $\Gamma_1$ via reflections across the canonical axes) which is contained in some rectangle $B$.
Thus the aforementioned inequality \eqref{eq:ineq_reduced} follows if we can show that
$$\int_0^1 \left[ r^2t^2 +(1-t)^2 \right]^{-\alpha/2} \dd t \leq \frac{(1+r)^{-\alpha}}{1-\alpha} \quad \text{for }r \in (0,1).$$
This last inequality is proven in the next two lemmas, namely  Lemma \ref{lm:integral_ineq_alpha} and Lemma \ref{lm:pointwise_ineq_}, concluding the proof of \eqref{eq:quadrant_lemma_alpha_neg}.

If there is equality in the theorem, there must be equality in all the above steps, in particular there must be equality in \eqref{eq:gamma norm Prosenjit estim} for almost every $s$ in the domain of $\gamma.$ Arguing by contradiction, this in turn yields that there must be equality in \eqref{prop:geom_est_on_gamma} for almost every $s$, and thus $\gamma_1$ and $\gamma_2$ are both linear functions and $\Gamma_1$ is a straight line. Now $\Gamma_1$ has to be equal to $\{0\} \times [0,L]$, otherwise we get a contradiction to Lemma \ref{gy:lemma:quadrant straight line}. 
\end{customproof}

\begin{Lemma} \label{lm:integral_ineq_alpha}
    Let $r \in (0,1)$ and $\alpha \in (0,1)$. Then the following inequality holds
    \begin{equation} \label{eq:integral_ineq_alpha}
        \int_0^1 \left(r^2  t^2 + (1-t)^2\right)^{-\alpha/2} \dd t \leq \frac{(1+r)^{-\alpha}}{1-\alpha}.
    \end{equation}
\end{Lemma}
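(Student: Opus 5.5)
The plan is to reduce \eqref{eq:integral_ineq_alpha} to a one-variable pointwise comparison of integrands, which would be the content of Lemma \ref{lm:pointwise_ineq_}. Write $f(t):=\big(r^2t^2+(1-t)^2\big)^{1/2}=|(rt,1-t)|$. This is the distance from the origin to a point moving linearly along the segment from $(0,1)$ to $(r,0)$. Hence $f$ is convex and attains its minimum $d=r/\sqrt{1+r^2}$ at $t_0=1/(1+r^2)$. Moreover $f(0)=1$ and $f(1)=r$.

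The right-hand side is $\int_0^1\big((1+r)s\big)^{-\alpha}\dd s$, the same energy for the needle parametrized with speed $1+r$. The inequality therefore says that the point of the segment stays, on average, farther from the origin than the needle point. By the bathtub/rearrangement idea of Lemma \ref{lm:bathtub_principle} (with $k=1$), a pointwise lower bound $f\geq (1+r)s$ after a monotone reparametrization would suffice. However, the naive distribution comparison $|\{f<\lambda\}|\le \lambda/(1+r)$ fails near $\lambda=1$, since $\{f<1\}=(0,1]$. So a global rearrangement cannot work as is, and the comparison has to be organized more carefully.

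Concretely, I split $(0,1)$ at $t_0$ into $(0,t_0)$, where $f$ decreases from $1$ to $d$, and $(t_0,1)$, where $f$ increases from $d$ to $r$. On each piece I substitute $u=f(t)$ and use
\[
f(t)f'(t)=(1+r^2)t-1 ,
\]
which gives $|f'|=\sqrt{u^2-d^2}\,\sqrt{1+r^2}/u$. The integral then becomes explicit:
\[
\int_0^1 f^{-\alpha}\dd t=\frac{1}{\sqrt{1+r^2}}\Big(\int_d^1+\int_d^r\Big)\frac{u^{1-\alpha}}{\sqrt{u^2-d^2}}\dd u .
\]
In the same way I write the right-hand side as an integral in $u$ over $(0,1+r)$ with density $u^{-\alpha}/(1+r)$. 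The goal is then to match pieces. The mass of the first integral near $u=d$ (an integrable square-root singularity) should be compared with the needle's mass near $u=0$, where $u^{-\alpha}$ is larger. The remaining range should be handled by the bound $\sqrt{u^2-d^2}\geq u-d$ together with the monotonicity of $u^{-\alpha}$. An alternative I would try in parallel is to rescale $u=dv$, reducing everything to a one-parameter inequality in $r$ and $\alpha$. I would then check the endpoints $r\to0$ (equality, since $f(t)=1-t$) and $r=1$ (where $2^{\alpha/2}(1-\alpha)\le 2^{-\alpha}$ holds), and finally prove monotonicity in $r$ by differentiating.

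The main obstacle is this final comparison: finding the right pointwise inequality, presumably Lemma \ref{lm:pointwise_ineq_}, that is valid uniformly for $r,\alpha\in(0,1)$. What I would try first is to show that $r\mapsto (1+r)^{\alpha}\int_0^1 f^{-\alpha}\dd t$ is decreasing on $(0,1)$. Its derivative in $r$ is
\[
\int_0^1 f^{-\alpha-2}\big(\alpha f^2/(1+r)-\alpha r t^2\big)\dd t ,
\]
so monotonicity reduces to the pointwise inequality $r^2t^2+(1-t)^2\le r(1+r)t^2$ wherever it is needed, combined with an integration by parts to control the region near $t=0$ where this pointwise bound fails. Together with equality in the limit $r\to0$, monotonicity gives \eqref{eq:integral_ineq_alpha}.
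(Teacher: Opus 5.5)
Your final plan has the same skeleton as the paper's proof. With $F(r)=(1+r)^{\alpha}\int_0^1 f^{-\alpha}\dd t$ one has $F(r)\to\frac{1}{1-\alpha}$ as $r\to0$, and your derivative is correct up to the factor $(1+r)^{\alpha}$: $F'(r)=\alpha(1+r)^{\alpha-1}G(r)$ with $G(r)=\int_0^1\big((1-t)^2-rt^2\big)f^{-\alpha-2}\dd t$. But proving $G(r)\le 0$ is the entire content of the lemma, and your proposal does not do it.

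The pointwise bound $f^2\le r(1+r)t^2$ is equivalent to $t\ge 1/(1+\sqrt r)$. So it does not fail merely ``near $t=0$'': it fails on all of $(0,1/(1+\sqrt r))$, an interval of length more than $1/2$ that exhausts $(0,1)$ as $r\to0$. The integrand of $G$ is negative only on the short interval $(1/(1+\sqrt r),1)$, which contains the minimum point $t_0=1/(1+r^2)$ of $f$. Hence $G\le 0$ is a genuine cancellation between two regions, and it is sharp: at $r=1$ the symmetry $t\mapsto 1-t$ gives $G(1)=0$. An unspecified integration by parts does not supply this. None of your other routes (the bathtub comparison, the level-set substitution $u=f(t)$, the bound $\sqrt{u^2-d^2}\ge u-d$) is carried to an actual inequality either.

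Incidentally, your endpoint check at $r=1$ is wrong as stated. The inequality $2^{\alpha/2}(1-\alpha)\le 2^{-\alpha}$ fails for small $\alpha$: the two sides are $1-0.653\alpha+O(\alpha^2)$ and $1-0.693\alpha+O(\alpha^2)$.

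The missing idea is a change of variables that pairs the positive and negative regions exactly. Substituting $t=1/(1+ru)$ gives $G(r)=r^{-\alpha}\int_0^\infty(1+ru)^{\alpha-2}(1+u^2)^{-1-\alpha/2}(ru^2-1)\dd u$, whose integrand changes sign at $u=1/\sqrt r$. The inversion $v=1/(ru)$ fixes $1/\sqrt r$ and maps $(1/\sqrt r,\infty)$ onto $(0,1/\sqrt r)$, which yields
\[
G(r)=\int_0^{1/\sqrt r}(1-rv^2)\Big[(1+v)^{\alpha-2}(1+r^2v^2)^{-1-\alpha/2}-r^{-\alpha}(1+rv)^{\alpha-2}(1+v^2)^{-1-\alpha/2}\Big]\dd v .
\]
Since $1-rv^2>0$ on this range, it suffices that the bracket be nonpositive, which is Lemma \ref{lm:pointwise_ineq_}. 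With $m=\sqrt r$ and $s=\sqrt r\,v$ it becomes $X^{1+\alpha/2}\le Y^{1-\alpha/2}$ for $X=\frac{m^2+s^2}{1+m^2s^2}$ and $Y=\frac{(m+s)^2}{(1+ms)^2}$. This follows from the elementary facts $0\le X\le Y\le 1$. Without this pairing, or an equivalent exact one, the monotonicity of $F$ remains unproved.
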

\begin{proof}
For a given $\alpha \in (0,1)$ define the function
    \[
    F(r):= (1+r)^{\alpha}\int_0^1 \left(r^2  t^2 + (1-t)^2\right)^{-\alpha/2} \dd t.
    \]
    Proving \eqref{eq:integral_ineq_alpha} is equivalent to showing that 
    $F(r)-F(0) \leq 0$ for all $r \in (0,1)$. Since $F$ is differentiable, we will show this by proving that $F'(r) \leq 0$ for $r \in (0,1)$.
    Computing $F'(r)$ we get the following expression
    \begin{align*}
        F'(r) = \alpha(1+r)^{\alpha-1} \left[ \int_0^1 \left((1-t)^2-r t^2 \right)\left(r^2t^2  + (1-t)^2\right)^{-1-\alpha/2} \dd t\right].
    \end{align*}
    In order to prove that $F'$ is decreasing it is enough to show that
    \[
    G(r):=\int_0^1 \left((1-t)^2-r t^2 \right)\left(r^2t^2  + (1-t)^2\right)^{-1-\alpha/2} \dd t \leq 0.
    \]
    By performing the change of variables $t=\frac{1}{1+ru}$ we obtain
    \begin{align*}
        G(r)&=\int_0^\infty \left(\frac{r(ru^2-1)}{(1+ru)^2}\right)\left(\frac{r^2(1+u^2)}{(1+ru)^2}\right)^{-1-\alpha/2} \frac{r}{(1+ru)^2} \dd u \\
        &= r^{-\alpha} \int_0^\infty (1+ru)^{\alpha-2} (1+u^2)^{-1-\alpha/2} (ru^2-1) \dd u.
    \end{align*}
    We now split the domain of integration $(0,\infty)$ into $\left( 0,\frac{1}{\sqrt{r}} \right)$ and $\left(\frac{1}{\sqrt{r}},\infty \right)$
    \begin{align*}
        G(r)=  r^{-\alpha} &\left[ \int_0^{\frac{1}{\sqrt{r}}} (1+ru)^{\alpha-2} (1+u^2)^{-1-\alpha/2} (ru^2-1) \dd u + \right. \\
        & \left. + \int_{\frac{1}{\sqrt{r}}}^{\infty} (1+ru)^{\alpha-2} (1+u^2)^{-1-\alpha/2} (ru^2-1) \dd u \right] =:r^{-\alpha} \left[ I_1 + I_2 \right].
    \end{align*}
    Note that the integrand of $I_1$ is negative in its domain of integration $u\in(0,1/\sqrt{r}),$ whereas the integrand of $I_2$ is positive for $u\in (1/\sqrt{r},\infty).$
In order to compare $I_1$ and $I_2$, we let $v=\frac{1}{ru}$ in $I_2$ which gives
\begin{align*}
    I_2&=\int_{\frac{1}{\sqrt{r}}}^{\infty} (1+ru)^{\alpha-2} (1+u^2)^{-1-\alpha/2} (ru^2-1) \dd u \\
    &= \int_0^{\frac{1}{\sqrt{r}}} \left(1+\frac{1}{v}\right)^{\alpha-2} \left(1+\frac{1}{r^2v^2}\right)^{-1-\alpha/2} \left(\frac{1}{r v^2}-1\right) \frac{\dd v}{rv^2} \\
    &= r^{\alpha} \int_0^{\frac{1}{\sqrt{r}}}  (1+v)^{\alpha-2} (1+r^2v^2)^{-1-\alpha/2} (1-r v^2) \dd v. 
\end{align*}
Hence we can rewrite $G(r)$ as
\begin{align}
    G(r) = \int_0^{\frac{1}{\sqrt{r}}} (1-r v^2)\left[ (1+v)^{\alpha-2} (1+r^2v^2)^{-1-\alpha/2}  -r^{-\alpha} (1+rv)^{\alpha-2} (1+v^2)^{-1-\alpha/2}\right] \dd v.
\end{align}
This manipulation of $G(r)$ enables us to reduce nonpositivity of $G$ to a pointwise estimate of the integrand. Specifically, we will prove the following inequality
  \begin{equation} \label{pointwise_ineq_}
    r^{\alpha} \left( \frac{1+v}{1+rv} \right)^{\alpha-2} \leq \left( \frac{1+r^2v^2}{1+v^2} \right)^{1+\alpha/2} \quad \text{for } v \in \left(0, \frac{1}{\sqrt{r}}\right).
    \end{equation}
    This inequality is ensured through Lemma \ref{lm:pointwise_ineq_}, thus proving \eqref{eq:integral_ineq_alpha}.
\end{proof}

\begin{Lemma}\label{lm:pointwise_ineq_}
    Let $r \in (0,1)$ and $\alpha \in (0,1)$. Then
    \begin{equation} \label{pointwise_ineq_statement}
    r^{\alpha} \left( \frac{1+v}{1+rv} \right)^{\alpha-2} \leq \left( \frac{1+r^2v^2}{1+v^2} \right)^{1+\alpha/2} \quad \text{for } v \in \left(0, \frac{1}{\sqrt{r}}\right).
    \end{equation}
    \end{Lemma}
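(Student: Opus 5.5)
The plan is to take logarithms, so that the inequality becomes affine in $\alpha$, and then verify it only at the endpoints $\alpha=0$ and $\alpha=1$. Since $r\in(0,1)$ and $v>0$, all quantities in \eqref{pointwise_ineq_statement} are positive. Taking logarithms, the statement is equivalent to $g_\alpha(v):=A(v)+\alpha B(v)\geq 0$ on $(0,1/\sqrt r)$, where
\begin{align*}
A(v)&:=2\log\frac{1+v}{1+rv}+\log\frac{1+r^2v^2}{1+v^2},\\
B(v)&:=-\log r-\log\frac{1+v}{1+rv}+\frac12\log\frac{1+r^2v^2}{1+v^2}.
\end{align*}
Because $g_\alpha$ is affine in $\alpha$, it suffices to prove $g_0\geq 0$ and $g_1\geq 0$ on $(0,1/\sqrt r)$. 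Every $\alpha\in(0,1)$ then gives a convex combination of two nonnegative numbers.

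\textbf{The case $\alpha=0$.} The claim $A\geq 0$ is the polynomial inequality $(1+v)^2(1+r^2v^2)\geq (1+rv)^2(1+v^2)$. Expanding, the difference is
$$
2v+2r^2v^3-2rv-2rv^3=2v(1-r)(1-rv^2),
$$
which is nonnegative exactly for $v\leq 1/\sqrt r$. This also shows why the interval $(0,1/\sqrt r)$ is the natural one.

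\textbf{The case $\alpha=1$.} This is the main step. Set
$$
h(v):=g_1(v)=\log\frac{1+v}{1+rv}-\log r+\frac32\log\frac{1+r^2v^2}{1+v^2}.
$$
Its values at the endpoints are:
\begin{itemize}
\item $h(0)=-\log r>0$;
\item $h(1/\sqrt r)=0$, using $\log\frac{1+1/\sqrt r}{1+\sqrt r}=-\tfrac12\log r$ and $\log\frac{1+r}{1+1/r}=\log r$. This reflects the symmetry $v\leftrightarrow 1/(rv)$ used in Lemma \ref{lm:integral_ineq_alpha}.
\end{itemize}
Since equality holds at the right endpoint, a crude bound will not work. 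Instead I will show that $h$ is unimodal: first increasing, then decreasing. A function with that shape attains its minimum at an endpoint, so $h\geq\min\{h(0),h(1/\sqrt r)\}=0$.

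A direct computation gives
$$
h'(v)=(1-r)\left[\frac{1}{(1+v)(1+rv)}-\frac{3v}{(1+v^2)(1+r^2v^2)}\right].
$$
Hence the sign of $h'$ is the sign of $-q(v)$, where
$$
q(v):=3v(1+v)(1+rv)-(1+v^2)(1+r^2v^2)=-1+3v+(2+3r-r^2)v^2+3rv^3-r^2v^4 .
$$
The coefficient signs of $q$ are $-,+,+,+,-$. By Descartes' rule of signs, $q$ has at most two positive roots. Now compare three values:
\begin{itemize}
\item $q(0)=-1<0$;
\item $q(1/\sqrt r)=1-r+6/\sqrt r+2/r>0$;
\item $q(v)\to-\infty$ as $v\to\infty$.
\end{itemize}
So $q$ has a root in $(0,1/\sqrt r)$ and another beyond $1/\sqrt r$. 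By the Descartes bound, the root in $(0,1/\sqrt r)$ is unique. Therefore $q$ changes sign exactly once on the interval, from negative to positive. Equivalently, $h'$ is positive and then negative, which is the required unimodality.

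The only delicate point is this sign analysis of the quartic $q$; the rest is routine algebra. If the Descartes argument needed extra care, I would fall back on the substitution $w=\sqrt r\,v\in(0,1)$ to make the symmetry explicit. Combining the two endpoint cases $\alpha=0$ and $\alpha=1$ via the affine dependence on $\alpha$ proves \eqref{pointwise_ineq_statement}.
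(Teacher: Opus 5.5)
Your argument is correct in outline, but the displayed formula for $h'$ has a slip. The derivative of $\frac32\log\frac{1+r^2v^2}{1+v^2}$ is $-\frac{3v(1-r^2)}{(1+v^2)(1+r^2v^2)}$. So after factoring out $(1-r)$ you should get
\[
h'(v)=(1-r)\left[\frac{1}{(1+v)(1+rv)}-\frac{3(1+r)v}{(1+v^2)(1+r^2v^2)}\right],
\]
and the correct quartic is
\[
q(v)=-1+3(1+r)v+2(1+3r+r^2)v^2+3r(1+r)v^3-r^2v^4 .
\]
This quartic still has sign pattern $-,+,+,+,-$ and tends to $-\infty$. It satisfies $q(0)=-1$ and $q(1/\sqrt r)=4+2r+\frac{2}{r}+\frac{6(1+r)}{\sqrt r}>0$. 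Hence your Descartes/unimodality argument goes through verbatim, and the proof stands once this is fixed.

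Your route differs from the paper's. The paper substitutes $m=\sqrt r$, $s=\sqrt r\,v$ and rewrites the inequality as $X^{1+\alpha/2}\le Y^{1-\alpha/2}$, where $X=\frac{m^2+s^2}{1+m^2s^2}$ and $Y=\frac{(m+s)^2}{(1+ms)^2}$. It then checks two facts: $X\le Y$, which is exactly your $\alpha=0$ case $A\ge 0$, and $Y\le 1$. It concludes by $X^{1+\alpha/2}\le Y^{1+\alpha/2}\le Y^{1-\alpha/2}$.

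In your notation, $Y\le 1$ is $\sqrt r(1+v)\le 1+rv$, i.e.\ $(1-\sqrt r)(1-\sqrt r\,v)\ge 0$. Moreover one has the identity
\[
g_\alpha=\Bigl(1+\frac{\alpha}{2}\Bigr)A(v)+\alpha\Bigl(-\log r-2\log\frac{1+v}{1+rv}\Bigr),
\]
which is a nonnegative combination of two elementary nonnegative quantities. So the step you call the main one ($\alpha=1$) can be replaced by a one-line factorization, with no derivative or root counting.

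Your approach buys a systematic strategy (log-affinity in $\alpha$ plus endpoint checks) that does not require spotting the auxiliary bound $Y\le 1$. The paper's approach buys brevity, and it gives the inequality for every $\alpha\ge 0$ at once, with no interpolation.
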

\begin{proof}
    By setting $s=\sqrt{r} v$ and $m=\sqrt{r}$, we can conveniently rewrite \eqref{pointwise_ineq_statement} as follows
    \[
   \left(\frac{m^2+s^2}{1+m^2s^2}\right)^{1+\alpha/2}=:X^{1+\alpha/2} \leq Y^{1-\alpha/2}:=\left( \frac{(m+s)^2}{(1+ms)^2}\right)^{1-\alpha/2}
   \quad \text{for all } m\in(0,1)\text{ and }s\in (0,1).
    \]

We claim that the quantities $X$ and $Y$ satisfy the condition $0 \leq X \leq Y \leq 1$.
    Clearly both $X,Y$ are nonnegative. We first check $Y\leq 1$.
    \begin{align*}
        Y\leq 1 \iff (m+s)^{2} - (1+ms)^2=(m^2-1)(1-s^2) \leq 0,
    \end{align*}
    which is clearly true as $m^2$ and $s^2$ are both smaller than $1.$
    It is also not difficult to see that $X \leq Y$, as shown by the following calculation:
    \begin{align*}
        Y-X &= \frac{(m+s)^2}{(1+ms)^2}- \frac{m^2+s^2}{1+m^2s^2} \\
        &=\frac{(m+s)^2(1+m^2s^2)- (m^2+s^2)(1+ms)^2}{(1+ms)^2(1+m^2s^2)} \\
        &=\frac{2ms(1-m^2)(1-s^2)}{(1+ms)^2(1+m^2s^2)} \geq 0.
    \end{align*}
 Using these conditions on $X$ and $Y$, and since $1+\alpha/2 \geq 1-\alpha/2$, we have
\[
X^{1+\alpha/2} \leq Y^{1+\alpha/2} \leq Y^{1-\alpha/2},
\]
which gives \eqref{pointwise_ineq_statement}.
\end{proof}

\begin{Remark}\label{remark:Almut}
Theorem \ref{thm:intro:alpha and n is 1} provides an alternative proof of  inequality \eqref{eq:log_inequality} in Corollary \ref{Cor:intro:p and log ineq} (without equality case) by taking the derivative with respect to $\alpha$ at $\alpha=0.$  Indeed, by Theorem \ref{thm:intro:alpha and n is 1} we have that
\begin{align*}
   \frac{ E_{-\alpha}(\Gamma) - E_0(\Gamma)}{\alpha}  \leq \frac{\frac{4^{\alpha}}{1-\alpha}E_0(\Gamma)^{1-\alpha}-E_0(\Gamma)}{\alpha} \quad \text{for all $\alpha \in (0,1)$},
\end{align*}
Thus, by taking $\alpha \to 0$ in the above expression one obtains \eqref{eq:log_inequality}.
\end{Remark}

\subsection{Minimizing $E_{2}$ in $\R^2$}
\label{section:p=2}

We devote this section to proving a result concerning the weight $|x|^2$ which has already been discovered by Sachs in  \cite{SachsIandII} and thus is not new. However,  the works of Sachs are difficult to access,  a bit sketchy as they sometimes rely on pictures, and only available in German.\footnote{Notes containing a partial English translation are available on the website of the first author, see reference \cite{SachsIandII} for a link.} Thus we will provide a proof, since we will use Theorem \ref{lm:quad_lemma_p=2} later in Section \ref{section:majorization} and to make our paper self-contained.  Most importantly, Theorem \ref{lm:quad_lemma_p=2} will serve as a fundamental tool to generalize the result to a broader class of weights (see Theorem \ref{Thm:general_phi}).

Our proof is completely different from the one by Sachs, arguably simpler, and mainly algebraic, but has the drawback of giving little geometric insight and intuition. We discovered this proof before finding the results of Sachs. It is inspired by the proof of the Riesz rearrangement inequality by \cite{hardy1952inequalities} via a discretization and induction argument. In our case we approximate the curve by a piecewise linear one and use the fact that on each piece the integrals of $|x|^2$ can be calculated explicitly. Sachs \cite{SachsIandII} uses an approximation  by polytopes, and an induction on the number of sides. 

\begin{Theorem} \label{lm:quad_lemma_p=2}
Let $\Gamma \subset \R^2$ be the graph of a concave decreasing function in the first quadrant connecting the coordinate axes. Then
\begin{equation} \label{ineq:conj2}
 \int_{\Gamma} |x|^2 \dHo(x) \geq \int_{0}^{|\Gamma|} t^2  \dd t =    \frac{|\Gamma|^3}{3}
\end{equation}
and equality holds if $\Gamma$ is a straight line segment. 
\end{Theorem}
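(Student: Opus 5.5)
The plan is to reduce to polygonal curves, where $\int|x|^2$ is explicit, and then to induct on the number of sides. As the introduction to this section indicates, the model is the discretization-and-induction proof of Riesz's rearrangement inequality.

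\emph{Step 1: reduction and equality case.} I approximate $\Gamma$ by inscribed polygonal lines $P_0P_1\cdots P_N$ with $P_0=(a,0)$ and $P_N=(0,b)$, whose vertices lie on $\Gamma$ and which are again graphs of concave decreasing functions. Both $\int|x|^2\dHo$ and the length converge, either by Theorem \ref{lm:continuity_energy} applied to the fourfold reflected convex sets, or directly by uniform convergence of arc-length parametrizations. So it suffices to prove \eqref{ineq:conj2} for such polygons.

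For a segment from $P$ to $Q$ with length $l$ and unit direction $u$, expanding gives $\int_0^l|P+su|^2\dd s=l|P|^2+l^2P\cdot u+l^3/3$. Using $|Q|^2=|P|^2+2lP\cdot u+l^2$, this becomes
\begin{equation*}
\int_{[P,Q]}|x|^2\dHo=\frac{l}{2}\big(|P|^2+|Q|^2\big)-\frac{l^3}{6}.
\end{equation*}
For a single segment from $(a,0)$ to $(0,b)$ we have $l^2=a^2+b^2$, so the right-hand side equals $l^3/3$. This proves the equality statement.

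For a polygon with side lengths $l_i$ and $\rho_i:=|P_i|^2$, the inequality to prove becomes the purely algebraic statement
\begin{equation*}
D:=\sum_{i=1}^N\Big[\frac{l_i}{2}(\rho_{i-1}+\rho_i)-\frac{l_i^3}{6}\Big]-\frac13\Big(\sum_{i=1}^N l_i\Big)^3\ \geq\ 0.
\end{equation*}

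\emph{Step 2: induction on $N$.} The case $N=1$ is Step 1. For the inductive step I remove one vertex while keeping the polygon admissible (concave, decreasing, ending on the axes) and show that $D$ does not increase.

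The first operation I would try is this. Extend the first side $P_1P_0$ beyond $P_1$ backwards; more precisely, extend the line through $P_1P_2$ until it hits the $x$-axis at a point $P_0'$. Because of concavity, $P_0'$ lies on the segment between $P_0$ and the origin. Replacing $P_0P_1$ by $P_0'P_1$ merges the first two sides into one segment $P_0'P_2$, giving an admissible polygon with $N-1$ sides. The alternative operation is to cut the corner at $P_1$. In either case the comparison is local, since only $P_0$, $P_1$ and $P_2$ change. Because $D$ is homogeneous of degree $3$, I may normalize, for instance $P_1=(1,0)$ after a rotation, or fix the total length. The claim then reduces to an explicit inequality in about three real parameters: the position of $P_0$ on the axis, the angle at $P_1$, and $|P_1P_2|$. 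The rest of the polygon enters only through the total length $L$, which appears in the cross term $-\frac13 L^3$.

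\emph{Expected obstacle.} The main difficulty is this local two-segment inequality. The operation changes the length, so the $-\frac13L^3$ term couples the local change to the whole remaining length $L'$ of the polygon. One therefore needs the local change in energy to dominate $\frac13\big((L'+\ell)^3-(L'+\ell')^3\big)$ uniformly in $L'\geq0$.

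My first attempt would be to write the change of $D$ as a polynomial in $L'$ and check nonnegativity coefficientwise, using concavity, which gives $P_0'\in[0,P_0]$ and nonnegative angles, together with the triangle inequality. If this fails, I would switch to the corner-cutting operation, or choose the vertex to remove as the one where the turning angle is smallest. The last fallback is an induction that removes the vertex adjacent to an axis and compares the remainder with a straight segment through Step 1.
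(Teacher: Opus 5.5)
Your Step 1 is sound. Inscribed polygons of a concave graph are again admissible, the segment formula $\int_{[P,Q]}|x|^2\dHo=\frac l2\big(|P|^2+|Q|^2\big)-\frac{l^3}{6}$ is correct, and it settles the equality case. The gap is Step 2, which carries the entire content of the theorem. You do not prove the inductive inequality, and the two operations you propose do not satisfy it.

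First, the geometry is reversed. By concavity, the line through $P_2,P_1$ lies above the graph beyond $P_1$. So $P_0'$ lies beyond $P_0$, farther from the origin, not between $P_0$ and the origin.

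More seriously, neither local operation decreases $D$. Take $P_0=(a,0)$, $P_1=(a,b)$, $P_2=(c,b)$, $P_3=(0,b)$ with $0<c<a$. This is a quarter of a rectangle with an extra vertex on its top side, so $D=0$.
\begin{itemize}
\item Corner-cutting at $P_1$ gives $(a,0)\to(c,b)\to(0,b)$. Your formula yields $D_{\mathrm{new}}=c(a-c)\big(\sqrt{(a-c)^2+b^2}-(a-c)\big)>0$.
\item For the extension, tilt the top: $P_2=(c,b+s(a-c))$, $P_3=(0,b+s(a-c))$. Then $P_0'=(a+b/s,0)$. As $s\to 0$ one has $D_{\mathrm{old}}\to 0$, while $D_{\mathrm{new}}\to cb^2/2$.
\end{itemize}
Small perturbations, e.g.\ $P_0=(a+\eta,0)$, make these genuine concave graphs and keep $D_{\mathrm{old}}<D_{\mathrm{new}}$. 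Hence no coefficientwise check in $L'$ can succeed. The fallbacks (removing the vertex with the smallest turning angle, or comparing with a segment) are not developed into an argument.

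The missing idea is a reduction move whose effect on $D$ can actually be controlled. The paper approximates by graphs made of pieces of equal horizontal width: on $[0,n]$ with slopes $-a_i$ and $0\le a_1\le\dots\le a_n$. Then $D$ becomes an explicit function $g_n(a)$ of the slopes.

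The induction compares this polygon with the one having slopes $a_1,\dots,a_{n-1}$ on $[0,n-1]$. In other words, the steepest piece, adjacent to the $x$-axis, is deleted, and the whole remaining graph is translated down by $a_n$. This is a global move, not a local vertex removal.

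For this move, the increment $g_n(a)-g_{n-1}(a_1,\dots,a_{n-1})$ is bounded below by a sum of nonnegative terms. The bound uses $\sqrt{1+a_i^2}\le\sqrt{1+a_n^2}$ together with the elementary inequalities \eqref{ineq2} and \eqref{ineq4}, the latter proved via \eqref{ineq3}.
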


Note that we do not deal with the case of equality, as we will not need it for our application in Section \ref{section:majorization}. We will comment on the equality case below after Corollary \ref{thm:p=2}. 

Theorem \ref{lm:quad_lemma_p=2} can also be stated as follows. If $f$ is a nonnegative, decreasing, concave function on $[0,L]$ such that $f(L)=0$, then
\begin{equation}\label{eq:gy:quadrant lem p is 2 with f}
  \int_0^L (t^2+f(t)^2)\sqrt{1+f'(t)^2}dt
  \geq\frac{1}{3}\left(\int_0^L\sqrt{1+f'(t)^2}dt\right)^3,
\end{equation}
with equality if $f(t)=m(L-t)$ for some $m\geq 0.$ The condition $f(L)=0$ is actually not necessary, as can be seen by a simple shifting argument.

This result readily implies the following global result.

\begin{Corollary} \label{thm:p=2}
Let $\Gamma \subset \R^2$ be a convex curve in $\R^2$, which has two orthogonal axes of symmetry meeting at the origin.  Then,
\begin{equation} \label{eq:conj_p=2}
{E_2(\Gamma)=}\int_{\Gamma} |x|^{2}  \dHo(x) \geq \frac{1}{48}|\Gamma|^{3}.
\end{equation}
Equality holds if $\Gamma$ is the \textit{needle}.
\end{Corollary}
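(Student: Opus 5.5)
The plan is to reduce Corollary \ref{thm:p=2} to Theorem \ref{lm:quad_lemma_p=2} by cutting $\Gamma$ along its two symmetry axes into four congruent pieces. After a rotation (recall $E_2$ is rotation invariant) we may assume that the two orthogonal axes of symmetry are the coordinate axes. Then $\Gamma$ is invariant under the reflections $(x,y)\mapsto(\pm x,\pm y)$. Set $\Gamma_1:=\Gamma\cap\{x\geq 0,\,y\geq 0\}$. The four pieces $\Gamma\cap Q_i$, with $Q_i$ the closed quadrants, are images of $\Gamma_1$ under these isometries fixing the origin. They overlap only in at most four points on the axes, and $|x|^2$ is invariant under the reflections. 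Hence
$$
E_2(\Gamma)=4\int_{\Gamma_1}|x|^2\dHo(x)\quad\text{and}\quad |\Gamma|=4|\Gamma_1|.
$$
This holds in the nondegenerate case. In the degenerate case (the needle, or a segment on one axis centred at $0$) the definition with the factor $\eta=2$ gives the same identities, since each half of the segment is counted twice.

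Next I check that $\Gamma_1$ satisfies the hypothesis of Theorem \ref{lm:quad_lemma_p=2}. Let $\Omega$ be the convex body bounded by $\Gamma$. Symmetry under both reflections together with convexity gives $0\in\Omega$. Moreover $\Omega\cap\{x\geq0,y\geq0\}$ is the subgraph of the function $f(x)=\max\{y:(x,y)\in\Omega\}$ on $[0,a]$, where $a=\max\{x:(x,0)\in\Omega\}$. This $f$ is concave as the upper boundary of a convex set. It is decreasing because $\Omega$ is symmetric about the $y$-axis: $f(-x)=f(x)$, and a concave even function is decreasing on $[0,a]$. So $\Gamma_1$ is the graph of $f$ together with a possible vertical segment $\{a\}\times[0,f(a)]$, and it connects the points $(0,f(0))$ and $(a,0)$ on the two axes.

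The main subtlety is this vertical piece, since Theorem \ref{lm:quad_lemma_p=2} is stated for graphs of concave decreasing functions. I would handle it by interchanging the roles of the axes. Reflecting across the diagonal, $\Gamma_1$ becomes the graph over $[0,f(0)]$ of the generalized inverse of $f$. This function is concave and decreasing and has no vertical part at the end, except when $f(0)$ is also attained on a flat top. Then the reflected curve has a horizontal initial piece, which is fine as a graph. Alternatively, I would approximate $\Gamma_1$ in Hausdorff distance by graphs of concave decreasing functions with very steep last pieces and pass to the limit. This uses the continuity of length and of $\int|x|^2$ along such convex arcs, which follows from Theorem \ref{lm:continuity_energy} applied to the symmetric hulls. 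The degenerate cases where $\Gamma_1$ is a segment on one axis are covered directly by Theorem \ref{lm:quad_lemma_p=2}, or by a direct computation.

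Applying \eqref{ineq:conj2} to $\Gamma_1$ then gives
$$
E_2(\Gamma)=4\int_{\Gamma_1}|x|^2\dHo\geq \frac{4}{3}|\Gamma_1|^3=\frac{4}{3}\cdot\frac{|\Gamma|^3}{64}=\frac{|\Gamma|^3}{48}.
$$
For the needle $(-a,a)\times\{0\}$ we have $|\Gamma|=4a$ and $E_2=2\int_{-a}^a t^2\,dt=\frac{4a^3}{3}=\frac{(4a)^3}{48}$, so equality holds.
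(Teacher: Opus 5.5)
Your proposal is correct and is essentially the paper's route: the paper says only that Theorem \ref{lm:quad_lemma_p=2} ``readily implies'' the corollary by splitting $\Gamma$ into four congruent quarters, and elsewhere notes that the theorem extends to quarters that are not graphs by approximation, which is your second option. One caveat is that the diagonal-reflection trick fails when $\Gamma_1$ has both a vertical side $\{a\}\times[0,f(a)]$ and a flat top, for example a quarter of a rectangle (the very case the paper flags); after reflection the flat top becomes a vertical \emph{end} piece, not a horizontal initial one, so in that case you must use the approximation argument.
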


It is not true that if equality holds in \eqref{eq:conj_p=2} then $\Gamma$ is a needle. By Theorem \ref{lm:quad_lemma_p=2} we have equality also for centrally symmetric parallelograms. In addition we miss the rectangles. Indeed  if $a+b=c$ is constant then for a quarter of the centered rectangle with sides $2a$ and $2b$ we get that
$$
  E_2(\Gamma)/4=\int_0^a (t^2+b^2)dt+\int_0^b(t^2+a^2)dt=(a+b)^3/3=c^3/3
$$
is constant, thus has the same energy as the needle with same length $E_0(\Gamma)$. With our method of proof of Theorem \ref{lm:quad_lemma_p=2} we miss the rectangles, as they are not the graph of a function. The problem is the approximation argument in our proof, as the rectangle is only the limit of graphs, but not a graph itself. 
Sachs \cite[2. Mitteilung, Satz 2]{SachsIandII} proved that these are actually the only cases when equality can occur.

We start by stating three inequalities that will be used repetitively. 

\begin{Lemma}[Three simple inequalities]
        \begin{equation}  \label{ineq2}
            b\sqrt{1+a^2} \geq a \sqrt{1+b^2} \qquad \text{for }b\geq a  \geq 0,
        \end{equation}
        \begin{equation}  \label{ineq3}
            1+b\sqrt{1+a^2} \geq \sqrt{1+a^2}\sqrt{1+b^2}\qquad \text{for }b\geq a  \geq 0,
        \end{equation}
        \begin{equation}  \label{ineq4}
           bc\sqrt{1+a^2} + \sqrt{1+c^2}\geq \sqrt{1+a^2}\sqrt{1+b^2}\sqrt{1+c^2} \qquad \text{for }c\geq b\geq a  \geq 0.
        \end{equation}

\end{Lemma}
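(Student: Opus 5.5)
Each of the three inequalities has nonnegative sides, so the plan is to square and reduce to an elementary polynomial inequality in $a,b,c$, exploiting the ordering $c\geq b\geq a\geq 0$.

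For \eqref{ineq2}, squaring gives $b^2(1+a^2)\geq a^2(1+b^2)$, that is, $b^2\geq a^2$, which holds since $b\geq a\geq 0$. An equivalent way to see it is that $t\mapsto t/\sqrt{1+t^2}$ is increasing on $[0,\infty)$.

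For \eqref{ineq3}, set $A=\sqrt{1+a^2}$ and $B=\sqrt{1+b^2}$. Squaring gives
\[
1+2bA+b^2A^2\geq A^2B^2=A^2+b^2A^2,
\]
which reduces to $1+2bA\geq A^2=1+a^2$, i.e.\ $2b\sqrt{1+a^2}\geq a^2$. This holds because $b\sqrt{1+a^2}\geq a\cdot a$. Alternatively, one can use the identity $AB-ab\geq 1$ (Cauchy–Schwarz applied to $(1,a)$ and $(1,b)$), rewritten appropriately.

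For \eqref{ineq4}, set $C=\sqrt{1+c^2}$ and divide by $C$. The inequality becomes
\[
\frac{bc}{C}\,A+1\geq AB.
\]
Since $c\geq b$ and $t\mapsto t/\sqrt{1+t^2}$ is increasing, we have $c/C\geq b/B$. Hence the left side is at least $1+\dfrac{b^2A}{B}$. It therefore suffices to show $B+b^2A\geq AB^2=A+Ab^2$, i.e.\ $B\geq A$, which is true because $b\geq a$. This is the main step to get right: the reduction works thanks to the monotonicity in $c$. If the direction of that reduction failed, I would instead square directly and expand, but the monotone substitution seems clean. No serious obstacle is expected, since all three inequalities are routine algebra.
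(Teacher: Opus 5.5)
Your proof is correct. For \eqref{ineq2} and \eqref{ineq3} you argue exactly as the paper does: you square and reduce to $b^2\ge a^2$, respectively to $2b\sqrt{1+a^2}\ge a^2$.

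For \eqref{ineq4} your route is a cleaner variant of the paper's. The paper keeps the undivided difference
$h(a,b,c)=bc\sqrt{1+a^2}+\sqrt{1+c^2}-\sqrt{1+a^2}\sqrt{1+b^2}\sqrt{1+c^2}$.
It checks the base case $h(a,b,b)=\sqrt{1+b^2}-\sqrt{1+a^2}\ge 0$ and then proves $\partial_c h\ge 0$. Because the term $\sqrt{1+c^2}\,\bigl(1-\sqrt{1+a^2}\sqrt{1+b^2}\bigr)$ is decreasing in $c$, that step needs the bound $c/\sqrt{1+c^2}\le 1$ followed by an appeal to \eqref{ineq3}.

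Your normalized difference is exactly $h/\sqrt{1+c^2}$. In it, $c$ enters only through the increasing factor $c/\sqrt{1+c^2}$. So monotonicity in $c$ is immediate, no derivative is needed, and \eqref{ineq4} no longer depends on \eqref{ineq3}. Both arguments end at the same base case $c=b$, namely $\sqrt{1+b^2}\ge\sqrt{1+a^2}$. The paper's version only gains the reuse of \eqref{ineq3}; yours is shorter and self-contained.

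One small correction: your aside proving \eqref{ineq3} via Cauchy--Schwarz goes the wrong way. The bound $\sqrt{1+a^2}\sqrt{1+b^2}\ge 1+ab$ is a lower bound on the product, while \eqref{ineq3} needs an upper bound, so drop that remark. A valid one-line alternative is $\sqrt{1+a^2}\,\bigl(\sqrt{1+b^2}-b\bigr)=\sqrt{1+a^2}/\bigl(\sqrt{1+b^2}+b\bigr)\le 1$, which holds because $\sqrt{1+b^2}\ge\sqrt{1+a^2}$ and $b\ge 0$.
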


\begin{proof}
Inequality \eqref{ineq2} is elementary and follows by taking the squares on both sides, for instance.

\eqref{ineq3} can be easily seen by taking the square on both sides, simplifying, and using eventually  that $2b\sqrt{1+a^2}\geq 2a\sqrt{1+a^2}\geq 2a^2\geq a^2.$

To prove \eqref{ineq4} define the function
\[
h(a,b,c):= bc \sqrt{1+a^2} +\sqrt{1+c^2}-\sqrt{1+a^2} \sqrt{1+b^2}\sqrt{1+c^2}
\]
Note that $h(a,b,b) = \sqrt{1+b^2}-\sqrt{1+a^2} \geq 0$ and
\begin{align*}
    \p_c h &= b\sqrt{1+a^2} + (1-\sqrt{1+a^2}\sqrt{1+b^2}) \frac{c}{\sqrt{1+c^2}}  \\
    &\geq b\sqrt{1+a^2} + (1-\sqrt{1+a^2}\sqrt{1+b^2}) \sup_{c}\frac{c}{\sqrt{1+c^2}} \\
    &= b\sqrt{1+a^2} + 1-\sqrt{1+a^2}\sqrt{1+b^2} \geq 0,
\end{align*}
where we have used in the last inequality \eqref{ineq3} to conclude.
\end{proof}

The proof of Theorem \ref{lm:quad_lemma_p=2} relies on an approximation by piecewise affine functions, reducing the statement to an algebraic inequality involving the slopes.

\begin{customproof}{Theorem \ref{lm:quad_lemma_p=2}}
We prove the theorem by approximating the function with a sequence of piecewise decreasing linear functions on equidistant intervals. More precisely, given $n\in\mathbb{N}$, in the formulation of \eqref{eq:gy:quadrant lem p is 2 with f}, we show the inequality for a function $f:[0,n]\to [0,\infty)$ such that $f$ is continuous and $f'(t)=a_i$ is constant if $t\in (i-1,i)$ for every $i=1,\ldots,n.$ Due to the convexity and nonnegativity assumption we must have that
$$
  0\leq a_1\leq a_2\leq\cdots\leq a_n\,.
$$
Due to the scaling invariance of the inequality to be proven, and by a simple approximation argument, it is sufficient to prove Theorem \ref{lm:quad_lemma_p=2} for such a function $f.$

Let us define $a=(a_1,\dots,a_n) \in \R^n$ and for $i = 1, \dots, n+1$ define
\[
A_i^{(n)} :=\sum_{k=i}^{n} a_k, \quad A_{n+1}^{(n)}=0,\quad\text{ so that }\quad f(i-1)=A_i^{(n)}.
\]
With these abbreviations we must have that
\begin{align}
    f(t)= A_i^{(n)} - a_i (t - i +1 ) \qquad \text{for }t\in (i-1, i).
\end{align}

Thus our goal is to prove  for every $n \in \N$ and for every $a=(a_1, \dots, a_n)$ with $a_n \geq \dots \geq a_1 \geq 0$ the following inequality holds
\begin{equation} \label{eq:quadlemmaaff}
      \int_0^n (t^2+f(t)^2)\sqrt{1+f'(t)^2}dt
  \geq\frac{1}{3}\left(\int_0^n\sqrt{1+f'(t)^2}dt\right)^3
  =\frac{1}{3} \left( \sum_{i=1}^{n} \sqrt{1+a_i^2} \right)^3.
\end{equation}
Let us compute the left hand side of the inequality in \eqref{eq:quadlemmaaff}, where we denote the graph of $f$ by $\Gamma$. Thus we obtain that
\begin{align*}
     &\int_{\Gamma} |x|^2 \dHo(x) = \sum_{i=1}^{n} \left(\int_{i-1}^{i} \left( t^2 + f(t)^2 \right) \sqrt{1+a_i^2} \dd t \right) \\
     &=\sum_{i=1}^{n}  \sqrt{1+a_i^2} \left( \frac{i^3-(i-1)^3}{3} +  \int_0^{1} (A_i^{(n)}-a_i t)^2 \dd t \right) \\
     &= \frac{1}{3} \sum_{i=1}^{n}  \sqrt{1+a_i^2} \left( i^3-(i-1)^3 - \frac{(A_i^{(n)}-a_i)^3 -  (A_i^{(n)})^3  }{a_i}\right)  \\
     &= \frac{1}{3} \sum_{i=1}^{n}  \sqrt{1+a_i^2} \left( i^3-(i-1)^3 + 3(A_i^{(n)})^2-3A_i^{(n)}a_i+ a_i^2\right).
\end{align*}
We now use that $A_i^{(n)}=A_{i+1}^{(n)}+a_i$ and $i^3-(i-1)^3=3i^2-3i+1$ to obtain
$$
  \int_{\Gamma} |x|^2 \dHo(x)=\frac{1}{3}
  \left[\sum_{i=1}^n(1+a_i^2)^{3/2}+3\sum_{i=1}^n\sqrt{1+a_i^2}
  \left(i^2-i+A_i^{(n)}A_{i+1}^{(n)}\right)
  \right].
$$
Thus \eqref{eq:quadlemmaaff} is equivalent to proving that
\begin{align}\label{eq:gy:E2 step 1}
\sum_{i=1}^{n} (1+a_i^2)^{3/2}+3 \sum_{i=1}^{n} \sqrt{1+a_i^2}\left(i^2-i + A_i^{(n)} A_{i+1}^{(n)}\right) \geq  \left(\sum_{i=1}^{n} \sqrt{1+a_i^2}\right)^3.
\end{align}
Let us assume for the moment that $n\geq 3$, introduce the notation
$$
  b_i:=\sqrt{1+a_i^2}\,,
$$
and observe that the right hand side of \eqref{eq:gy:E2 step 1} is equal to
$$
  \left(\sum_{i=1}^n b_i\right)^3=\sum_{i,j,k=1}^n b_i b_j b_k
  =\sum_{i=1}^n b_i^3+3\sum_{i=1}^n b_i\sum_{j=1,j\neq i}^n b_j^2
  +6 \sum_{1 \leq i <j<k\leq n} b_i b_j b_k\,.
$$
This yields that \eqref{eq:gy:E2 step 1} is equivalent to
$$
  \sum_{i=1}^{n} b_i\left(i^2-i + A_i^{(n)} A_{i+1}^{(n)}\right) \geq
  \sum_{i=1}^n b_i\sum_{j=1,j\neq i}^n b_j^2
  +2 \sum_{1 \leq i <j<k\leq n} b_i b_j b_k\,.
$$
Now use that
$$
  \sum_{i=1}^n b_i\sum_{j=1,j\neq i}^n b_j^2=
  \sum_{i=1}^n b_i\sum_{j=1,j\neq i}^n (1+a_j^2)=
  \sum_{i=1}^n b_i\left(n-1+\sum_{j=1,j\neq i}^n a_j^2\right)
$$
and the identity $i^2-i-(n-1)=(i-1)^2+i-n$. Inserting these identities we are able to
reduce the inequality \eqref{eq:gy:E2 step 1} to the nonnegativity of the auxiliary function $g_n(a)$ defined as
\begin{equation}\begin{split}
g_n(a)&:= \sum_{i=1}^{n} \sqrt{1+a_i^2} \left( (i-1)^2 + i - n + A_i^{(n)} A_{i+1}^{(n)} - \sum_{j=1, \dots, n, \, j \neq i } a_j^2 \right)  \\ &- 2 \sum_{1 \leq i <j<k\leq n} \sqrt{1+a_i^2} \sqrt{1+a_j^2} \sqrt{1+a_k^2}.
\label{eq:gy:gn defin}
\end{split}
\end{equation}
If $n=2,$ then the last term in the definition of $g_n$, the one involving the sum with the three indices $i<j<k$ has to be understood as $0$. With this convention the equivalence of \eqref{eq:gy:E2 step 1} with the nonnegativity of $g_n$ is valid also for $n=2,$ as can be easily checked by a simple computation.

Hence, inequality \eqref{eq:quadlemmaaff} is equivalent to proving that
\begin{align}
    g_n (a) \geq 0 \quad \text{for every $a=(a_1, \dots, a_n)$ with $a_n \geq \dots \geq a_1 \geq 0$}.
\end{align}
For the case $n=2$ we get
\begin{align*}
    g_2(a)&= \sqrt{1+a_1^2} \left( a_1a_2-1\right) + \sqrt{1+a_2^2}\left( 1-a_1^2\right) \\
    &= \left( \sqrt{1+a_2^2}- \sqrt{1+a_1^2}\right) + a_1 \left(a_2\sqrt{1+a_1^2} -a_1\sqrt{1+a_2^2}    \right) \\ &\geq a_1 \left(a_2\sqrt{1+a_1^2} -a_1\sqrt{1+a_2^2}    \right) \geq 0,
\end{align*}
where in the last inequality we used \eqref{ineq2}.

Now assuming the induction hypothesis we prove the general case. Note that
\[
A_i^{(n)} A_{i+1}^{(n)} = A_i^{(n-1)} A_{i+1}^{(n-1)} + a_n\left(  \sum_{k=i}^{n-1} a_k +\sum_{l=i+1}^{n-1} a_l + a_n \right).
\]
Exploiting this we can write again $g_n(a)$ putting in evidence $g_{n-1}(a)$ as
\begin{align*}
    g_n(a) &= \sum_{i=1}^{n-1} \sqrt{1+a_i^2} \left(  (i-1)^2 +i - (n-1) + A_i^{(n-1)} A_{i+1}^{(n-1)} - \sum_{j=1,\dots,n-1, \,j\neq i} a_j^2 \right) -\sum_{i=1}^{n-1}\sqrt{1+a_i^2} \\
    &+ \sum_{i=1}^{n-1} \sqrt{1+a_i^2} a_n \left( \sum_{k=i}^{n-1}a_k + \sum_{l=i+1}^{n-1} a_l \right) + \sqrt{1+a_n^2} \left((n-1)^2 + \sum_{j=1}^{n-1}a_j^2 \right) \\
    & - 2 \sum_{1 \leq i <j<k \leq n-1} \sqrt{1+a_i^2} \sqrt{1+a_j^2} \sqrt{1+a_k^2} - 2 \sum_{1 \leq i <j\leq n-1} \sqrt{1+a_i^2} \sqrt{1+a_j^2}  \sqrt{1+a_n^2} \\
    &= g_{n-1}(a)  -\sum_{i=1}^{n-1}\sqrt{1+a_i^2} +   \sqrt{1+a_n^2} \left((n-1)^2 - \sum_{j=1}^{n-1}a_j^2 \right)  \\
    &+  \sum_{i=1}^{n-1} \sqrt{1+a_i^2} a_n \left( \sum_{k=i}^{n-1}a_k + \sum_{l=i+1}^{n-1} a_l \right) - 2 \sum_{1 \leq i <j\leq n-1} \sqrt{1+a_i^2} \sqrt{1+a_j^2}  \sqrt{1+a_n^2}.
 \end{align*}

Now use that and that $\sqrt{1+a_i^2} \leq \sqrt{1+a_n^2}$ for $i=1,\dots,n$ and that $(n-1)^2-(n-1) = 2 \binom{n-1}{2}$, we obtain 
$$
   -\sum_{i=1}^{n-1}\sqrt{1+a_i^2} +   \sqrt{1+a_n^2} (n-1)^2\geq 2 \binom{n-1}{2} \sqrt{1+a_n^2}\,.
$$
Taking advantage of this inequality, of the induction hypothesis $g_{n-1}(a)\geq 0,$ and of the identity $\sum_{k=i}^{n-1}a_k=a_i+\sum_{l=i+1}^{n-1}a_l$ shows that
 \begin{align*}
     g_n(a) &\geq 2 \binom{n-1}{2} \sqrt{1+a_n^2} -  \sum_{i=1}^{n-1} a_i^2 \sqrt{1+a_n^2}   + \sum_{i=1}^{n-1}a_n a_i \sqrt{1+a_i^2}  \\
     &+ 2 \sum_{i=1}^{n-1} \sqrt{1+a_i^2} \, a_n \sum_{j=i+1}^{n-1} a_j  - 2   \sum_{1 \leq i <j\leq n-1} \sqrt{1+a_i^2} \sqrt{1+a_j^2}  \sqrt{1+a_n^2}\,.
\end{align*}
To estimate the second and third term we use \eqref{ineq2}, namely $a_n\sqrt{1+a_i^2}-a_i\sqrt{1+a_n^2}\geq 0,$   which implies that $g_n(a)$ is bounded below as 
\begin{align*}
      g_n(a) &\geq  2 \binom{n-1}{2} \sqrt{1+a_n^2} + 2 \sum_{1\leq i <j \leq n-1} \sqrt{1+a_i^2} \, a_j a_n   - 2   \sum_{1 \leq i <j\leq n-1} \sqrt{1+a_i^2} \sqrt{1+a_j^2}  \sqrt{1+a_n^2}\,.
 \end{align*}
 For each $j$ such that $i < j \leq n-1$ we know from \eqref{ineq4}  that
 \[
 \sqrt{1+a_n^2} + \sqrt{1+a_i^2} a_j a_n - \sqrt{1+a_i^2} \sqrt{1+a_j^2}  \sqrt{1+a_n^2} \geq 0,
 \]
which proves $g_n(a) \geq 0$.

\end{customproof}

\subsection{Results for general weights via a majorization argument}
\label{section:majorization}

In this section we prove Theorem \ref{Thm:general_phi} and Theorem \ref{thm:four quadrant convex}. The method heavily relies on Theorem \ref{lm:quad_lemma_p=2} of the previous section.

We first recall the definitions and some elementary properties of rearrangements, which is required in the proof of Theorem \ref{Thm:general_phi}.
    Let $f: [0, L] \to \mathbb{R}$ be a measurable, nonnegative function. The \textit{distribution function} of $f$ is defined as
    $$\mu_f(\tau) = |\{x \in [0, L] : f(x) > \tau\}|.$$
    The \textit{decreasing rearrangement} $f^{\star}$ is defined as
    $$f^{\star}(s) = \inf\{\tau \ge 0 : \mu_f(\tau) \le s\}.$$
    However, for us it will be more convenient to use the  \textit{increasing rearrangement} $f^*$, which is defined by reflecting the domain of the decreasing rearrangement, namely
    $$f^*(s) = f^{\star}(L - s).$$
    Equivalently, it could have been defined directly as $f^*(s) = \sup\{\tau \ge 0 : |\{x \in [0,L] : f(x) < \tau\}| \le s\}$. By construction, $f^*$ is an increasing function on $[0, L]$ and is equimeasurable with $f$.

We will use the following two known properties of rearrangements; see for instance \cite{Baernstein} for the first property,  and \cite[p. 81, Chapter 3]{lieb2001analysis} or \cite[p. 277]{hardy1952inequalities} for the second property.

$(i)$ Let $f: [0, L] \to \mathbb{R}$ be a measurable, nonnegative Lipschitz function with Lipschitz constant $[f]_{Lip([0,L])}$. Then
    \begin{equation} \label{lm:rearrange_is_lip}
    [f^*]_{Lip([0,L])}\leq  [f]_{Lip([0,L])}.
    \end{equation}

$(ii)$ For any nonnegative measurable function $f:[0,L] \to \R$ and absolutely continuous function $\psi:[0,\infty) \to \R$, we have that
    \begin{equation}\label{eq:invar integ under rearr}
       \int_0^L \psi(f(t)) \dd t = \int_0^L \psi(f^*(t)) \dd t.
    \end{equation}

The next lemma is a variant of classical majorization theorems, see  for instance, \cite[Theorem D.2 page 22, Proposition E.4 page 751]{Marshall}. However, \cite{Marshall} does not deal with the case of equality. We prove here a version tailored to our purpose. For a sharper version of a similar majorization result, we refer to \cite[Theorem 2.3]{Chong}.

\begin{Lemma} \label{prop:maj_tool} 
Suppose that $L>0$ and that  $a$ is a bounded increasing function on $[0,L]$ and $b\in L^{\infty}((0,L)).$  Assume that $I \subset \mathbb{R}$ is a bounded open interval and that $a([0,L]), b([0,L]) \subset \overline{I}$. 
Let $\phi:I  \to \R$ be a convex and decreasing function such that  $\phi \circ b \in L^1((0,L))$, and assume that the following majorization condition holds
\begin{equation} \label{eq:maj_assumption}
    \int_0^x a(t) \dd t \ge \int_0^x b(t) \dd t \quad \text{for all } x \in (0, L).
\end{equation}
Then,
\begin{equation} \label{eq:maj_conclusion}
    \int_0^L \phi(a(t)) \dd t \le \int_0^L \phi(b(t)) \dd t 
\end{equation}
and $\phi\circ a\in L^1((0,L))$.

Conversely, if $\phi$ is a concave and increasing function, the inequality in \eqref{eq:maj_conclusion} is reversed.

Moreover, if $\phi$ is strictly convex (or strictly concave) and there is equality in \eqref{eq:maj_conclusion}, then $a(t)=b(t)$ for almost every $t\in(0,L).$
\end{Lemma}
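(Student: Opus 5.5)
The plan is the classical Karamata/Hardy–Littlewood–Pólya argument: use a supporting line of $\phi$ at $a(t)$, and convert the majorization condition into a sign by integrating by parts against the monotone slope function. Since $\phi$ is convex on the open interval $I$, it has a right derivative $\phi'_+$ at every interior point. We fix a measurable selection $s(t)\in\partial\phi(a(t))$, with the usual care at endpoints of $\overline I$ (treated below). Convexity gives
\[
\phi(b(t))-\phi(a(t))\ \ge\ s(t)\,\big(b(t)-a(t)\big)
\]
for every $t$. Because $a$ is increasing and $\phi$ is convex, $t\mapsto s(t)$ is increasing. Because $\phi$ is decreasing, $s\le 0$.

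Next define $H(x):=\int_0^x (a-b)\,dt$. Then $H(0)=0$, and $H\ge 0$ on $(0,L)$ by \eqref{eq:maj_assumption}. Extending \eqref{eq:maj_assumption} to $x=L$ by continuity gives $H(L)\ge 0$. Integrating by parts in the Stieltjes sense,
\[
\int_0^L s\,(b-a)\,dt \;=\; -\int_0^L s\,dH \;=\; -s(L)H(L)+\int_0^L H\,ds .
\]
The first term is $\ge 0$ since $s(L)\le 0$ and $H(L)\ge0$. The second is $\ge 0$ since $H\ge 0$ and $ds\ge 0$. Hence $\int_0^L\phi(b)\ge\int_0^L\phi(a)$, which is \eqref{eq:maj_conclusion}. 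Integrability of $\phi\circ a$ comes out of the same computation. On the one hand, $\phi\circ a\le \phi\circ b+|s|\,|a-b|$. On the other hand, $\phi\circ a$ is monotone and bounded below by the value of $\phi$ at $\sup a$ (or by a linear lower bound). So it suffices to control $s$, for which the next step is needed.

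\textbf{Main obstacle: the endpoints.} The values $a(t)$ may hit $\partial I$, where $\phi$ may blow up and $s$ may be $-\infty$; the prototype is $\phi(x)=x^{-\alpha/2}$ near $0$. To handle this, first prove the inequality for the truncations $a_\varepsilon=\max(a,\inf I+\varepsilon)$, or more cleanly, replace $\phi$ by convex decreasing functions $\phi_k$ that are finite and Lipschitz on $\overline I$ and increase to $\phi$ (for example, $\phi$ cut off by its tangent line at $\inf I+1/k$). For each $\phi_k$ the argument above is rigorous, since $s$ is bounded and BV. Then let $k\to\infty$ by monotone convergence on both sides: $\phi_k\circ b\le\phi\circ b\in L^1$ bounds the left sides uniformly, so Fatou/monotone convergence gives \eqref{eq:maj_conclusion} together with $\phi\circ a\in L^1$. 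The concave increasing case follows by applying the result to $-\phi$, which is convex and decreasing.

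For the equality case with $\phi$ strictly convex, equality forces $\phi(b(t))-\phi(a(t))=s(t)(b(t)-a(t))$ for a.e.\ $t$. Strict convexity then forces $a=b$ a.e., provided the equality chain can be run. With the approximation this needs care: I would use that equality for $\phi$ together with the inequality for $\phi-\phi_k$ on the region where $\phi_k=\phi$. Alternatively, note that both nonnegative terms above must vanish for $\phi$ restricted to $\{a\ge \inf I+\varepsilon\}$, and let $\varepsilon\to0$.
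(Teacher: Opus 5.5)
Your proposal is correct and follows essentially the paper's proof: a supporting line at $a(t)$ whose slope is increasing and nonpositive in $t$, integration by parts against the nonnegative primitive $\int_0^x(a-b)$ (you use Riemann--Stieltjes integration where the paper mollifies the slope function), the cut-off of $\phi$ by its tangent line at $\inf I+1/k$, and monotone convergence.

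For the equality case the paper instead applies Fatou's lemma to the nonnegative gaps $\phi_m(b)-\phi_m(a)-S_m(b-a)$. Your first suggestion also works, but you should state its key point. Because the cut-off is by a supporting line, $\phi-\phi_k$ is itself convex and decreasing, so the inequality you have already proved applies to it. Together with equality for $\phi$, this forces equality for $\phi_k$. The rigorous chain for $\phi_k$ and the strict convexity of $\phi$ on $(\inf I+1/k,\sup I)$ then give $a=b$ a.e.\ on $\{a>\inf I+1/k\}$, and you let $k\to\infty$. Your second suggestion, by contrast, would first have to control the extra boundary term $s(t_\varepsilon)H(t_\varepsilon)\le 0$ created by restricting to $\{a\ge \inf I+\varepsilon\}$.
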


\begin{proof} We first focus on showing inequality \eqref{eq:maj_conclusion}.
Since $\phi$ is convex, its left derivative $\phi'_-$ is well defined, increasing, and is a subgradient at every point (any choice of a subgradient would work for this proof).

Let $-\infty<\alpha<\beta<\infty$ be such that $I=(\alpha,\beta).$ A convex decreasing function on $(\alpha,\beta)$, if not bounded, can blow up only at $\alpha$ such that $\lim_{x\searrow \alpha}\phi(x)=+\infty.$ If this is the case, we shall define a family of cutoff functions $\{\phi_m\}_{m\geq 1}$ such that $\phi$ is replaced by a linear function near $\alpha$ in the following way (if $\phi$ is bounded, just set $\phi_m=\phi$ everywhere in the proof below):
$$
  \phi_m(x):=\left\{
  \begin{array}{ll}
    \phi(x) & \text{ if }x> \alpha+1/m
    \\
    \smallskip
    \phi(\alpha+1/m)+\phi_-'(\alpha+1/m)(x-(\alpha+1/m)) & \text{ if }x\in (\alpha,\alpha+1/m].
  \end{array}\right. 
$$
Note that $\phi_m$ is well defined for $m$ big enough, and also a convex decreasing function. We can thus define
$$
  S_m(t):=\left(\phi_m\right)_-'(a(t))\quad\text{ for every }t\in[0,L].
$$
For every integer $m\geq 1$  the function $S_m$  is  increasing , since it is the composition of two increasing functions. Moreover it holds that $S_m$ is bounded and $S_m(t)\leq 0$ for every $t\in [0,L].$  

Since $\phi_m$ is convex, the first-order Taylor expansion dictates that
\begin{equation}
\label{eq:gy:a is b if phi strict}
\phi_m(b(t)) - \phi_m(a(t)) \ge S_m(t) (b(t) - a(t)) \quad \text{for  every } t \in (0, L).
\end{equation}
Integrating both sides over $[0, L]$ gives
\begin{equation} \label{eq:maj_proof_1}
    \int_0^L \big( \phi_m(b(t)) - \phi_m(a(t)) \big) \dd t \ge \int_0^L S_m(t) (b(t) - a(t)) \dd t.
\end{equation}

Let us define the absolutely continuous function $G(x) = \int_0^x (b(t) - a(t)) \dd t$. By the majorization assumption \eqref{eq:maj_assumption}, we know that $G(x) \le 0$ for all $x \in [0, L]$, and clearly $G(0) = 0$.

We will first prove that for every bounded increasing nonpositive function $S$ 
\begin{equation}\label{eq:SG partial integ}
  \int_0^L S(t)(b(t)-a(t))\dd t=\int_0^L S(t)G'(t)\dd t\geq 0\quad\text{ if $G\leq 0$ and $G(0)=0$}.
\end{equation}
This last inequality is easily verified by partial integration if $S\in C^1([0,L])$ in the following way. From the identity 
\begin{align*} 
\int_0^L S(t) G'(t) \dd t =& S(L)G(L) - S(0)G(0) - \int_0^L G(t) S'(t)dt\\
=&S(L)G(L)-\int_0^L G(t) S'(t)\dd t\,,
\end{align*}
the assumptions $S\leq 0$, $G\leq 0$, and $S'\geq 0$ we obtain the inequality \eqref{eq:SG partial integ} in this case.

For general $S$ we argue by approximation. Since $S$ is bounded and increasing we have that  $S\in L^1((0,L))$ and thus $S$ can be approximated by a sequence of functions $S_{\epsilon}\in C^1([0,L])$ such that $\lim_{\epsilon\to 0} \|S_{\epsilon}-S\|_{L^1}=0.$ Moreover, $S_{\epsilon}\leq 0$ and $S_{\epsilon}$ is increasing, since approximation by a standard convolution method with a nonnegative convolution kernel preserves these two properties. This concludes the proof of \eqref{eq:SG partial integ} for general $S.$ 

It now follows from  \eqref{eq:SG partial integ} applied to $S=S_m$ that
\begin{equation}\label{eq:SG applied to Sm}
  \int_0^L S_m(t) (b(t) - a(t)) \dd t\geq 0. 
\end{equation}
This last inequality combined with \eqref{eq:maj_proof_1}  shows that  for every $m\geq1$
$$
  \int_0^L\phi_m(a(t))\dd t\leq \int_0^L\phi_m(b(t))\dd t.
$$
It now follows from letting $m\to\infty$ and  monotone convergence theorem that \eqref{eq:maj_conclusion} holds.  Note that the monotone convergence theorem applies, even though the $\phi_m$ could change sign, since the $\phi_m$ are bounded by below by the infimum of $\phi$ on the bounded interval $I.$

Finally, to show that $\phi\circ a$ is in $L^1$ we decompose $\phi$ into its positive and negative part as $\phi=\phi^+-\phi^-$ and observe that $\phi^-$ must be bounded by the assumptions on $\phi$ and by the boundedness of the interval $I.$ Hence we obtain from \eqref{eq:maj_conclusion} and from the hypothesis $\phi\circ b\in L^1$ that $\phi^+\circ a$ belongs to $L^1$ too.

 The case where $\phi$ is concave and increasing follows  by applying the lemma to $-\phi$. 

\smallskip 

\textit{\underline{Proof of the Case of Equality.}} 
Let us define two functions on $(0,L)$ by 
\begin{align*}
    H_m(t):=& \phi_m(b(t)) - \phi_m(a(t)) - S_m(t) (b(t) - a(t))\qquad\text{and}
    \\
    H(t):=&\phi(b(t)) - \phi(a(t)) - \phi_-'(a(t)) (b(t) - a(t)).
\end{align*}
If $\phi$ is not bounded, then it follows from $\phi\circ a$, $\phi\circ b\in L^1$ that  $a(t)$ and $b(t)$ are strictly greater than $\alpha$ for almost every $t\in(0,L)$. 
Thus it follows that $H_m(t)$ converges to $H(t)$ for almost every $t\in (0,L]$ since $\phi_m(x)$ converges to $\phi(x)$ for every $x\in (\alpha,\beta)$, and  $S_m$ converges to $\phi_-'$ pointwise too. 

Note that \eqref{eq:gy:a is b if phi strict} and \eqref{eq:maj_proof_1} hold also with $\phi_m$ replaced by $\phi,$ respectively $S_m$ replaced by $\phi_-'$, as \eqref{eq:gy:a is b if phi strict} uses merely the convexity of $\phi_m$, but not the fact that $\phi_m$ has bounded derivative. In particular $\int_0^L H(t)\dd t\geq 0$.
On the other hand, from \eqref{eq:SG applied to Sm} we conclude that
$$
  \int_0^L H_m(t)\dd t\leq \int_0^L \left(\phi_m(b(t))-\phi_m(a(t))\right)\dd t.
$$
From this last inequality it follows by Fatou's lemma, monotone convergence theorem for $\phi_m\circ a$ and $\phi_m\circ b$, and the assumption of equality in  \eqref{eq:maj_conclusion} that
\begin{align*}
    0 \leq \int_0^L H(t)\dd t \leq \liminf_{m\to\infty}\int_0^L \left(\phi_m(b(t))-\phi_m(a(t))\right)\dd t=\int_0^L\phi(b(t))\dd t-\int_0^L\phi(a(t))\dd t=0. 
\end{align*}
This shows that $H(t)=0$ for almost every $t\in (0,L).$ Using the strict convexity of $\phi$ this yields that $a(t)=b(t)$ for almost every $t\in (0,L).$ 
\end{proof}

We have now all the tools needed for the proof of Theorem \ref{Thm:general_phi}.

\begin{customproof}{Theorem \ref{Thm:general_phi}}
Let $\Gamma_1=\Gamma\cap \{(x,y)\in\R^2;\, x\geq 0,\,y\geq 0\}$ be the intersection of $\Gamma$ with the first quadrant. We will first prove an estimate in the first quadrant, which will immediately extend to the other three quadrants due to the symmetry assumption on $\Gamma.$

    Let us denote $L=|\Gamma_1|=|\Gamma|/4$ and $\gamma$ be a parametrization by arc-length of $\Gamma_1$ with parameter $t \in [0, L]$.
Let $r(t) = |\gamma(t)|$. Because $\gamma$ is parameterized by arc-length, $|\gamma'(t)| = 1$.
This yields
$$ |r(t) - r(s)| = ||\gamma(t)| - |\gamma(s)|| \le |\gamma(t) - \gamma(s)| \le |t - s|. $$
Thus, $r$ is a Lipschitz function with $[r]_{Lip[0,|\Gamma|]}\leq 1$ and by \eqref{lm:rearrange_is_lip} also $r^*$ enjoys the same property.
From \eqref{eq:invar integ under rearr} and Theorem \ref{lm:quad_lemma_p=2}, we know that (Theorem \ref{lm:quad_lemma_p=2} is only stated for graphs, but it is valid also if $\Gamma_1$ is not a graph, by approximation)
\begin{equation} \label{eq:quad_lemma_p=2}
    \int_0^{L} r^*(s)^2 \dd s=\int_0^{|\Gamma_1|} r(s)^2 \dd s
    =\int_{\Gamma_1}|x|^2 d\mathcal{H}^1(x)\geq \frac{|\Gamma_1|^3}{3}
    =\int_0^{L}t^2 dt.
\end{equation}
Note that we only use the convexity in this step. 
We now claim that
\begin{equation} \label{eq:maj_hp_p=2}
\int_0^x r^*(t)^2 \, dt \ge \int_0^x t^2 \, dt \quad \text{for all }x \in [0,L],
\end{equation}
which holds true for any nonnegative increasing $1$-Lipschitz function $r^*$ satisfying \eqref{eq:quad_lemma_p=2}. To show this claim we define
$$
  G(x)=\int_0^x \left(r^*(t)^2-t^2\right)dt,
$$
which satisfies $G(0)=0,$ $G(L)\geq 0$. Our goal is to  prove that $G$ is nonnegative for all $x\in(0,L)$. Let us assume that $G$ achieves its minimum for some $x_0\in (0,L)$. Thus we obtain that $G'(x_0)=0$ which in turn gives $r^*(x_0)=x_0$ as $r^*$ is nonnegative. Using $r^*$ is $1$-Lipschitz and increasing, we obtain
$$
  x_0-r^*(t)=r^*(x_0)-r^*(t)=|r^*(x_0)-r^*(t)|\leq x_0-t\qquad\forall\,t\in (0,x_0).
$$
Thus $r^*(t)\geq t$ for all $t\in (0,x_0),$ and this proves \eqref{eq:maj_hp_p=2} for $x=x_0.$ But, as $G(x_0)$ is the minimum of $G,$ we have proven \eqref{eq:maj_hp_p=2}.

To prove Theorem \ref{Thm:general_phi} we now apply Lemma \ref{prop:maj_tool} with 
$a(t)=r^*(t)^2$ and $b(t)=t^2$ and $\phi=F.$ 
The assumption \eqref{eq:maj_assumption} of the lemma is ensured by \eqref{eq:maj_hp_p=2}. Thus we obtain, using also \eqref{eq:invar integ under rearr} that
$$
  \int_{\Gamma_1}F(|x|^2)d\mathcal{H}^1(x)=\int_0^L F\big((r^*(t))^2\big)\dd t\leq 
  \int_0^L F\big(t^2\big)\dd t
$$
which proves the inequality in Part (I) of  the theorem. 

In case there is equality in Theorem \ref{Thm:general_phi}, then it follows from Lemma \ref{prop:maj_tool} that $r^*(t)^2=t^2$ for every $t.$ This yields that $\min(r)=\min(r^*)=0.$ A convex curve which has two orthogonal axes of symmetry meeting at the origin and that passes through the origin has to be a needle.

Part (II) of the theorem follows from Part (I) applied to $-F.$ 

We now show that one can remove the assumption that the two orthogonal axes of symmetry meet at the origin if $t \mapsto F(t^2)$ is  concave in Part (I) (the analogous result for Part (II) follows by applying the same argument to $-F$). Consider the case of a weight function as in Part (I) with $t \mapsto F(t^2)$ being concave and let $\Gamma_P$ be a convex curve with two orthogonal axes of symmetry meeting at the point $P \in \R^2$. Denote by $\Gamma_0$ the same curve shifted in such a way that the two axes of symmetry now meet at the origin. By showing that $E_{F(|\cdot|^2)}(\Gamma_P) \leq E_{F(|\cdot|^2)}(\Gamma_0)$ we establish the desired claim. Observe that 
\[
E_{F(|\cdot|^2)}(\Gamma_P) =\int_{\Gamma_P} F(|x|^2) \dHo(x) =\int_{\Gamma_0} F(|x-P|^2) \dHo(x). 
\]
Since for $x \in \Gamma_0$, by symmetry, we have also $-x \in \Gamma_0$, we can further express $E_{F(|\cdot|^2)}(\Gamma_P) $ as 
\begin{align*}
E_{F(|\cdot|^2)}(\Gamma_P) &= \frac{1}{2}\int_{\Gamma_0} \left(F(|x-P|^2) + F(|x+P|^2) \right) \dHo(x) \\
&\leq \int_{\Gamma_0} F \left( \left( \frac{|x-P| + |x+P|}{2} \right)^2 \right) \dHo(x) \leq \int_{\Gamma_0} F(|x|^2) \dHo(x)=E_{F(|\cdot|^2)}(\Gamma_0),
\end{align*}
where in the first inequality we used the concavity of $t \mapsto F(t^2)$ and in the second inequality that $F$ is decreasing.

To deal with the case of equality, note that due to the first part of the theorem $\Gamma_0$ must be a needle. If $\Gamma_0$ is a needle, then $E_{F(|\cdot|^2)}(\Gamma_P)= E_{F(|\cdot|^2)}(\Gamma_0)$ can only hold if $P=0,$ since $F$ is strictly decreasing (due to stirct convexity).

\end{customproof}

We now prove Theorem \ref{thm:four quadrant convex}, which is an easy consequence of Theorem \ref{Thm:general_phi}.

\begin{customproof}{Theorem \ref{thm:four quadrant convex}}
We first prove statement (I) of Theorem \ref{Thm:general_phi} for four-quadrant-convex curves. For $i=1,\ldots,4$ let $\Gamma_i$, $O_i$ and $S_i$ be as in Definition \ref{def:4 quadrant convex}. Thus by definition and thanks to Theorem \ref{Thm:general_phi} we have that
$$
  4 E_{F(|\cdot|^2)}(\Gamma_i) =E_{F(|\cdot|^2)}(S_i) 
  \leq 4\int_0^{\frac{|S_i|}{4}}F(t^2)dt=4\int_0^{|\Gamma_i|}F(t^2)dt
  \quad\text{ for $i=1,\ldots,4$.}
$$
Taking the sum of these four inequalities we obtain that 
$$
  E_{F(|\cdot|^2)}(\Gamma)\leq \sum_{i=1}^4\int_0^{|\Gamma_i|}F(t^2)dt.
$$
Now observe that $f(t):=F(t^2)$ is decreasing and thus the function $x\mapsto \int_0^x f(t)dt$ is concave, which yields that
\begin{equation}\label{eq:gy:eq in proof 4 quadr}
  \sum_{i=1}^4\int_0^{|\Gamma_i|}F(t^2)dt\leq 4 \int_0^{\frac{|\Gamma_1|+|\Gamma_2|+|\Gamma_3|+|\Gamma_4|}{4}}F(t^2)dt=
  4\int_0^{\frac{|\Gamma|}{4}}F(t^2)dt.
\end{equation}
This proves the desired inequality of Theorem \ref{thm:four quadrant convex} in the case (I).

Let us now deal with the case of equality. If there is equality in \eqref{eq:gy:eq in proof 4 quadr} and $F$ is strictly convex (and thus $f$ strictly decreasing), then all four $|\Gamma_i|$ must be equal. This proves the claim on the case of equality in Theorem \ref{thm:four quadrant convex}, making use of the corresponding statement in Theorem \ref{Thm:general_phi}, that is, each $S_i$ must be a needle. 

The proof of  (II) is just applying Part (I) of the theorem to $-F.$ 
\end{customproof}

\section*{Acknowledgements}
The authors are grateful to Guido De Philippis for inspiring the shifting argument which led to Theorem \ref{thm:shifting_generalized}. We also thanks Charlotte Dietze for pointing out the works \cite{Charlotte2024,Hurwitz1902} and Giacomo Di Paolo for helping us recover a copy of the manuscripts of Sachs \cite{SachsIandII,SachsIII,SachsIV,SachsV}.
We should also thank Almut Burchard for Remark \ref{remark:Almut}. We are very grateful to Richard Laugesen and Pedro Freitas for helping us find several relevant references.

\bibliographystyle{abbrv}
\bibliography{Bib}
\end{document}